\DeclareMathOperator {\dist}{dist}
\renewenvironment{abstract}
{\small\vspace{-1em}
\begin{center}
\bfseries\abstractname\vspace{-.5em}\vspace{0pt}
\end{center}
\list{}{
\setlength{\leftmargin}{0.6in}%
\setlength{\rightmargin}{\leftmargin}}%
\item\relax}
{\endlist}
\declaretheorem[name=Theorem, numberwithin=section]{theorem}
\declaretheorem[name=Lemma, sibling=theorem]{lemma}
\declaretheorem[name=Definition, sibling=theorem]{definition}
\declaretheorem[name=Corollary, sibling=theorem]{corollary}
\declaretheorem[name=Claim, sibling=theorem]{claim}
\declaretheorem[name=Claim, numbered=no]{claim*}
\declaretheorem[name=Remark, style=remark, sibling=theorem]{remark}
\declaretheorem[name=Observation, style=remark, sibling=theorem]{observation}
\declaretheorem[name=Question, style=remark, sibling=theorem]{question}
\def\cqedsymbol{\ifmmode$\lrcorner$\else{\unskip\nobreak\hfil
\penalty50\hskip1em\null\nobreak\hfil$\lrcorner$
\parfillskip=0pt\finalhyphendemerits=0\endgraf}\fi}
\newcommand{\NN}{\mathbb{N}} 
\let\le\leqslant
\let\ge\geqslant
\let\leq\leqslant
\let\geq\geqslant
\title{On graph classes with constant domination-packing ratio}
\author[1]{Marthe Bonamy}
\author[2]{M\'onika Csik\'os}
\author[3,4]{Anna Gujgiczer}
\author[5]{Yelena Yuditsky}
\affil[1]{CNRS, LaBRI, Université de Bordeaux, Bordeaux, France.}
\affil[2]{Université Paris Cité, CNRS, IRIF}
\affil[3]{HUN-REN Alfréd Rényi Institute of Mathematics, Budapest, Hungary}
\affil[4]{MTA--HUN-REN RI Lendület ``Momentum'' Arithmetic Combinatorics Research Group, Budapest, Hungary}
\affil[5]{Université libre de Bruxelles (ULB), Brussels, Belgium}
\date{\today}
\begin{document}

\maketitle

\begin{abstract}
The dominating number $\gamma(G)$ of a graph $G$ is the minimum size of a vertex set whose closed neighborhood covers all the vertices of the graph. The packing number $\rho(G)$ of $G$ is the maximum size of a vertex set whose closed neighborhoods are pairwise disjoint. 
In this paper we study graph classes ${\cal G}$ such that $\gamma(G)/\rho(G)$ is bounded by a constant $c_{\cal G}$ for each $G\in {\cal G}$. We propose an inductive proof technique to prove that if $\cal G$ is the class of $2$-degenerate graphs, then 
there is such a constant bound $c_{\cal G}$.
We note that this is the first monotone, dense graph class that is shown to have constant ratio. We also show that the classes of AT-free and unit-disk graphs have bounded ratio. 
 In addition, our technique gives improved bounds on $c_{\cal G}$ for planar graphs, graphs of bounded treewidth or bounded twin-width. Finally, we provide some new examples of graph classes where  the ratio is unbounded.
\end{abstract}

\section{Introduction}\label{sec:intro}
The duality of packing and covering problems is well-known and their relation is widely studied in geometric setups. These two notions can naturally be translated to graphs using the notion of balls. 
\paragraph{Balls in graphs.} Let $G=(V,E)$ be a simple graph. The {\it distance} between two vertices $v,u\in V$, denoted $\dist_G(u,v)$, is the length of the shortest path between $u$ and $v$ in $G$.  Let $k\in \mathbb{N}$ and let a {\it $k$-ball} centered in $v\in V(G)$ be the set of all vertices at distance at most $k$ from $v$. 
For any $v\in V$ let $N_G[v]$ denote the $1$-ball centered in $v$, that is, the closed neighborhood of $v$ in $G$ and let $N_G(v)$ denote $N_G[v] \setminus \{v\}$. The {\it degree} of $v$ in $G$, denoted by $\deg_G(v)$ is defined as $|N_G(v)|$.
For any $X \subseteq V$ let $N_G[X]$ denote $ \bigcup_{v\in X} N_G[v]$ and let $N_G(X)$ denote $\bigcup_{v\in X} N_G(v)$. In the notations above and in subsequent ones, we drop the subscript when $G$ is clear from the context.

\paragraph{Covering and packing.} Covering a graph $G$ with as few balls of radius $1$ as possible is commonly known as finding a minimum dominating set of $G$. Formally, a {\it dominating set} in $G$ is a collection of vertices such that the union of their closed neighborhoods contains all of $V(G)$. We denote the size of minimum dominating set in $G$ by  $\gamma(G)$. 
The dual of this problem is to maximize the number of disjoint balls of radius $1$ that can be packed in a graph. 
Formally, a {\it packing} in $G$ is a collection of vertices whose closed neighborhoods are pairwise disjoint. In other words, a set of vertices form a packing iff their pairwise distances are at least $3$. We let $\rho(G)$ denote the size a maximum packing in $G$. \\

\noindent
A straightforward, general relation of $\gamma(G)$ and $\rho(G)$ is given by the following observation. For any graph $G$ and any packing, a dominating set must have a non-empty intersection with each of the closed neighborhoods of the vertices of the packing, thus $\gamma(G)\ge \rho(G)$. In general those two parameters are not equal. Simple examples where $\gamma(G) > \rho(G)$ include $G=C_4$ or $G=C_5$. 

\noindent
The problems of computing $\gamma(G)$ and $\rho(G)$ have natural formulations as integer programs. 
\begin{align}\label{eq:LP-relax}
&\text{minimize} \quad \sum_{v\in V(G)} x_v&
&\text{maximize} \quad \sum_{v\in V(G)} y_v \\
&\text{s.t.} \quad \sum_{u\in N[v]} x_u \ge 1, \mbox{     }\forall v\in V(G)&
&\text{s.t.} \quad  \sum_{u\in N[v]} y_u\le 1, \mbox{     }\forall v\in V(G), \nonumber\\
&\quad \quad x_v\in \{0,1\}, \quad \forall v\in V(G)&
&\quad \quad y_v\in \{0,1\}, \quad \forall v\in V(G) \nonumber
\end{align}
The linear program relaxations of the above integer programs are dual, and their optimums are called fractional dominating and fractional packing numbers, respectively. One of the early papers studying the integrality gap of these problems was due to Lov\'asz~\cite{Lovasz75}, who showed that the ratio of integral and fractional dominating numbers is always at most $\log (\Delta(G))$, where $\Delta(G)$ denotes the  maximum degree of $G$. 
In this paper, we study the following question.
\begin{restatable}{question}{Qmain}
    Given a graph class $\mathcal G$, is there a constant $c_{\mathcal G}$ such that \[\frac{\gamma(G)}{\rho(G)}\le c_{\mathcal{G}} \quad \text{ for each } \quad G\in \mathcal{G} ~~ ?\] 
\end{restatable}

\noindent
Importantly, a positive answer to this question immediately gives the bound $ c_{\mathcal{G}}$ on the integrality gap of both the dominating and the packing problem for any graph $G \in \mathcal G$.

\noindent

\subsection{Previous work}

There are graph classes for which the ratio $\gamma/\rho$ is known to be equal to $1$, namely, trees \cite[Theorem 7]{MM75}, strongly chordal graphs~\cite[Corollary 3.4]{Farber84}, and dually chordal graphs \cite[Theorem 3.2]{BCD98}.
The ratio is known to be at most $2$ for cactus graphs~\cite[Theorem 8]{LRR13} and connected biconvex graphs~\cite[Theorem 15]{GG24}.
Recently, it was also shown that for any bipartite cubic graph $G$, $\gamma(G)\le \frac{120}{49}\rho(G)$ holds \cite[Theorem 5]{GG24} and for any maximal outerplanar graph $H$, we have $\gamma(H)\le 3 \rho(H)$ \cite[Theorem 6]{GG24}.

\subsubsection{Low-degree graphs.} A general upper bound of $\gamma(G)\le \Delta(G)\rho(G)$  follows by observing that the union of all neighbors of the vertices in a maximum packing is a dominating set in the graph. This observation was first stated by Henning et al. \cite{HLR11}  who also showed that if $\Delta(G)\le 2$, then  $\gamma(G)\le \rho(G)+1$  with equality if and only if $G = C_n$ and $n \equiv 1, 2 \mod 3$.
 Furthermore, they proved that if $G$ is claw-free graph with $\Delta(G)\le 3$, then $\gamma(G)\le 2\rho(G)$ and stated the following conjecture on general subcubic graphs.

\begin{restatable}{conjecture}{maxdegconj}\label{conj:deg3}\cite{HLR11}
    Let $G$ be a connected graph with $\Delta(G)\le 3$, then $\gamma(G)\le 2\rho(G)+1$. The equality holds only for three  well-defined graphs, one of which being the Petersen graph.
\end{restatable}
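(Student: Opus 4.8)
The plan is to prove the inequality by induction on the packing number, exploiting an arithmetic that is perfectly tuned to the constant $2$ and the additive $+1$. Fix a maximum packing $P$, so $|P|=\rho(G)$, and recall that maximality forces every vertex to lie within distance $2$ of $P$. For the \emph{base case} $\rho(G)=1$ the graph has diameter at most $2$, so for any vertex $v$ its neighbourhood $N(v)$ already dominates $G$: every other vertex lies on a length-$2$ path out of $v$ and is therefore adjacent to a neighbour of $v$. Since $\Delta(G)\le 3$ this gives $\gamma(G)\le 3=2\rho(G)+1$, and the Petersen graph (diameter $2$, $\gamma=3$) shows this is tight. For the \emph{inductive step} I would isolate a packing vertex $v$ together with a surrounding region $S$, remove $S$, and argue two things: first that $S$ can be dominated by adding at most two vertices to a dominating set of $G-S$, and second that $\rho(G-S)=\rho(G)-1$. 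Granting both, the induction hypothesis yields
\[
\gamma(G)\le \gamma(G-S)+2\le \bigl(2\rho(G-S)+1\bigr)+2 = 2\bigl(\rho(G)-1\bigr)+3 = 2\rho(G)+1,
\]
so the additive constant is preserved exactly and never accumulates.

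The heart of the argument is the choice of the region $S$. The natural candidate is the ball $N^2[v]=\{u:\dist_G(u,v)\le 2\}$ around a packing vertex. Dominating $N^2[v]$ is easy when $v$ has degree at most $2$, or when its three branches share second neighbours, because then two well-chosen vertices cover the whole region; the trouble is exactly the \emph{claw-like} configuration in which $v$ has three neighbours each contributing two fresh, private distance-$2$ vertices. Such a region genuinely needs three dominators, so a one-vertex-at-a-time reduction cannot be tight there --- this is the same obstruction that restricted Henning, L\"owenstein and Rautenbach to the claw-free case. My intended remedy is to show that these full claws cannot occur in isolation: the private distance-$2$ vertices have spare degree, and following their remaining edges either merges the region of $v$ with a neighbouring packing region (allowing us to remove several balls at once and amortise the cost) or forces a highly structured local pattern. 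I would therefore set up the induction so that whenever a reducible configuration with an averaged cost of at most two dominators per packing vertex exists we recurse, and otherwise the graph is forced into one of finitely many rigid shapes.

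The step I expect to be the true obstacle is proving $\rho(G-S)=\rho(G)-1$, because the packing number is \emph{not} monotone under vertex deletion: removing $S$ can only increase distances, and a vertex set that is not a packing in $G$ (two of its members at distance $2$) may become a packing in $G-S$, so a priori $\rho(G-S)$ could exceed $\rho(G)-1$. The inequality $\rho(G-S)\ge \rho(G)-1$ is free, since $P\setminus\{v\}$ survives in $G-S$; the dangerous direction is the upper bound. Controlling it seems to require choosing $S$ to be a distance-preserving (isometric) piece of $G$, or at least one whose removal creates no new long-range room for extra packing vertices, and this is where the subcubic structure must be used in an essential way rather than through crude degree counting. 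I would attack it by analysing, for each candidate $S$, the set of vertices whose pairwise $G$-distances are inflated by the deletion and showing that they cannot support an enlarged packing.

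Finally, the equality characterisation --- that $\gamma(G)=2\rho(G)+1$ holds only for the three named graphs, the Petersen graph among them --- is the most delicate part, and I would treat it as a separate theorem. The strategy is to re-run the inductive argument under the assumption of tightness: every inequality above must then hold with equality at every step, which pins down the local structure around each packing vertex (forcing exactly the rigid claw patterns that blocked the reduction) and, via a global gluing/rigidity argument, forces these patterns to assemble into one of the finitely many extremal graphs. Bridging the clean inequality and the exact extremal list, together with the non-monotonicity issue above, is where I expect essentially all of the difficulty to lie, which is consistent with the statement remaining a conjecture.
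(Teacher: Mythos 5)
You are trying to prove a statement that the paper does not prove --- it is stated as a conjecture, quoted from \cite{HLR11} --- and, more importantly, half of it is \emph{refuted} in this very paper. \Cref{thm:negativeConj} constructs an infinite family $\{G_i\}$ (chains of $i$ six-vertex blocks closed up by an extra edge) with $\Delta(G_i)\le 3$, $\rho(G_i)=i$ and $\gamma(G_i)=2i+1$. So equality $\gamma=2\rho+1$ is attained by infinitely many graphs, not just three, and your planned endgame --- re-running the induction under tightness to force ``one of the finitely many extremal graphs'' --- cannot possibly succeed. Note also how the counterexample sits exactly on your amortisation scheme: each block costs two dominators while contributing one packing vertex, and the closing edge forces the $+1$; nothing in such a chain ever collapses to a bounded set of rigid shapes, since the chains can be arbitrarily long.

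Even for the inequality $\gamma(G)\le 2\rho(G)+1$, which remains open, your text is a plan rather than a proof: the two load-bearing steps are left unestablished, and you correctly identify them as the hard ones. First, the claw-like configuration around a packing vertex (three branches with private second neighbours) genuinely requires three dominators, so the per-step cost bound of two fails pointwise and the proposed amortisation across merged regions is never actually carried out. Second, the bound $\rho(G-S)\le\rho(G)-1$ is not available in general: deleting $S$ only increases distances, so new packings can appear, and no choice of $S$ is exhibited for which this is controlled (your base case $\rho(G)=1$ is fine, modulo the trivial one-vertex graph where $N(v)=\emptyset$, but it does not help with either obstacle). As written, the proposal neither proves the inequality nor can its characterisation part be salvaged; the correct contribution on this statement is the paper's negative one.
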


\subsubsection{Graphs with no large complete bipartite minors.} A graph $H$ is a {\it minor} of a graph $G$ if it can be obtained from $G$ by deleting vertices and edges and by contracting edges. Böhme and Mohar~\cite{BM03} studied the more general problem of packing and covering with balls of radius $k$ in $K_{q,r}$-minor free graphs. Their results imply the following on the domination and packing numbers. 
\begin{theorem}\cite[Corollary 1.2]{BM03}\label{BMthm}
 If $G$ does not contain a $K_{q,r}$-minor, then $\gamma(G)< (4r+(q-1)(r+1))\rho(G)-3r+1$. 
\end{theorem}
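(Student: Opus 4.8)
The plan is to start from a maximum packing and reduce the problem to dominating the vertices that the packing fails to reach, then control that residual set using the excluded minor. Fix a maximum packing $P$ with $|P| = \rho(G)$. Since the closed neighborhoods $\{N[p] : p \in P\}$ are pairwise disjoint and $P$ is maximum, no vertex can be added to $P$, so every vertex of $G$ lies within distance $2$ of $P$; equivalently $N[N[P]] = V(G)$. The set $P$ already dominates $N[P]$, so writing $R \eqdef V(G) \setminus N[P]$ for the vertices at distance exactly $2$ from $P$, it suffices to choose a set $D_R$ dominating $R$ and output $D = P \cup D_R$, giving $\gamma(G) \le \rho(G) + |D_R|$. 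The whole difficulty is thus pushed into bounding $|D_R|$.

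For the second step I would extract a minor that exposes how $R$ attaches to the packing. Contract each ball $N[p]$ to a single branch vertex $a_p$ and keep every $v \in R$ as a singleton; call the resulting minor $H$. Then $H$ is still $K_{q,r}$-minor-free, and each $v \in R$ is adjacent in $H$ to at least one $a_p$, since $v$ lies at distance $2$ from some $p \in P$ and hence has a neighbor inside $N[p]$. The key structural consequence of minor-freeness is that the bipartite incidence between $R$ and $\{a_p : p \in P\}$ cannot contain a copy of $K_{q,r}$ with the $r$-side among the $a_p$ and the $q$-side among $R$: such a copy would lift to a $K_{q,r}$-minor of $G$ (the balls as connected branch sets, the remote vertices as singletons, all disjoint since remote vertices avoid the balls). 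This is precisely the constraint preventing too many remote vertices from simultaneously touching too many balls.

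The third step is a charging argument built on this constraint. I would split $R$ into \emph{low-contact} vertices, adjacent in $H$ to at most $r-1$ of the $a_p$, and \emph{high-contact} vertices, adjacent to at least $r$ of them. A low-contact vertex can be charged to one of its incident balls, while at most $q-1$ remote vertices can share a common family of $r$ balls, which bounds the number of high-contact vertices per ball. Assembling these estimates, one dominates each relevant ball-boundary and its attached remote vertices with a bounded number of vertices per ball, so that $\rho(G) + |D_R| < (4r + (q-1)(r+1))\rho(G) - 3r + 1$, the claimed bound. Alternatively, one can simply specialize the radius-$k$ covering--packing theorem of Böhme and Mohar to $k = 1$, which is the route implicit in the citation.

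I expect the main obstacle to be pinning down the exact constant rather than the qualitative statement. That $\gamma(G) = O(\rho(G))$ on $K_{q,r}$-minor-free graphs follows quickly from the fact that such graphs are sparse, hence of bounded degeneracy; the delicate part is the precise extremal bookkeeping that yields the specific coefficient $4r + (q-1)(r+1)$ and the additive term $-3r+1$. This demands tracking exactly how remote vertices can attach to many balls without forming the forbidden minor, and optimizing the charging so that no ball is overcounted.
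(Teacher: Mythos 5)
You should first note that the paper contains no proof of this statement: it is imported verbatim as \cite[Corollary 1.2]{BM03}, so the only ``route'' the paper takes is the citation, and your closing sentence (specialize Böhme--Mohar to $k=1$) is exactly that citation, not a proof. Your first two steps are sound and are indeed the natural opening: a maximum packing $P$ satisfies $N[N[P]]=V(G)$, the balls $N[p]$ are disjoint and connected so contracting them is a legitimate minor operation, and $q$ remote vertices commonly attached to $r$ balls would lift to a $K_{q,r}$-minor. The genuine gap is in your third step, the charging argument, which fails in two distinct ways. For high-contact vertices, the constraint is only that each $r$-subset of balls is shared by at most $q-1$ remote vertices; since a fixed ball lies in $\binom{\rho(G)-1}{r-1}$ such subsets, the resulting per-ball bound is $(q-1)\binom{\rho(G)-1}{r-1}$ and the global bound is $(q-1)\binom{\rho(G)}{r}$, which is polynomial of degree $r$ in $\rho(G)$, not linear, so no assembly of these estimates yields a coefficient like $4r+(q-1)(r+1)$. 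For low-contact vertices the problem is worse: arbitrarily many remote vertices can attach to a single ball $N[p]$, at arbitrarily many distinct vertices of $N(p)$, so ``charging each to an incident ball'' counts them but gives no mechanism to \emph{dominate} them with $O(1)$ vertices per ball --- and dominating $R$, not counting it, is the whole task you deferred to $|D_R|$.

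Your final paragraph also contains a concrete error that explains why the gap cannot be patched cheaply: you assert that $\gamma(G)=O(\rho(G))$ for $K_{q,r}$-minor-free graphs ``follows quickly'' from sparsity and bounded degeneracy. That is false, and this very paper refutes it: \Cref{th:negative3deg} constructs $3$-degenerate graphs $T_k$ with $\rho(T_k)\le 2$ and $\gamma(T_k)\ge k$, and the introduction cites graphs of arboricity $3$ with unbounded ratio \cite{D19}. So even the qualitative statement genuinely requires the excluded minor (or a stronger structural surrogate such as bounded weak coloring numbers, as in \Cref{Zdenekthm}), used in a more global way than bounding bipartite incidences between remote vertices and balls; that missing idea is the heart of the Böhme--Mohar argument, not mere ``extremal bookkeeping'' over the constant.
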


\noindent
Since graphs embedded in a surface of genus $g$ cannot contain a $K_{k,3}$-minor for any $2g+3 \leq k$, the following bound holds.

\begin{corollary}\cite[Corollaries 1.3 and 1.4]{BM03}
   Let $G$ be a graph embedded in a surface of Euler genus $g$, then $\gamma(G)\le 4(2g+5)\rho(G)-9$. In particular, if $G$ is a planar graph, then $\gamma(G)\le 20\rho(G)-9$.    
\end{corollary}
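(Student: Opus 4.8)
The plan is to obtain this as a direct consequence of \cref{BMthm}, reducing the whole argument to two ingredients: a topological fact identifying a forbidden complete bipartite minor for surface-embeddable graphs, and then a substitution of the correct parameters into the bound of \cref{BMthm}. As indicated in the sentence preceding the statement, the relevant forbidden minor is $K_{2g+3,3}$, which corresponds to the choice $q=2g+3$ and $r=3$. I would stress at the outset that the coefficient $4r+(q-1)(r+1)$ is \emph{not} symmetric in $q$ and $r$, so the whole computation hinges on assigning the large part of the complete bipartite graph to $q$ and the part of size $3$ to $r$; the reverse assignment would produce a different (and wrong) constant.

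First I would establish the topological claim that a graph $G$ of Euler genus at most $g$ is $K_{2g+3,3}$-minor free. Since Euler genus is minor-monotone, it suffices to show that $K_{k,3}$ has Euler genus strictly greater than $g$ whenever $k\ge 2g+3$. Fixing a minimum-genus (hence cellular) embedding of $K_{k,3}$ with Euler genus $\gamma_E$, Euler's formula reads $V-E+F=2-\gamma_E$; since $K_{k,3}$ is bipartite, every face has length at least $4$, so $2E\ge 4F$ and hence $F\le E/2$. Combining these gives $\gamma_E\ge E/2-V+2$, and substituting $V=k+3$ and $E=3k$ yields $\gamma_E\ge (k-2)/2$. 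For $k\ge 2g+3$ this forces $\gamma_E\ge g+1$, so $K_{k,3}$ does not embed in any surface of Euler genus $g$ and therefore cannot be a minor of $G$.

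With this in hand I would apply \cref{BMthm} with $q=2g+3$ and $r=3$, which gives
\begin{equation*}
\gamma(G) < \bigl(4\cdot 3 + (2g+2)\cdot 4\bigr)\rho(G) - 9 + 1 = 4(2g+5)\rho(G) - 8 .
\end{equation*}
Here is the one place where care is genuinely required: the bound of \cref{BMthm} is a \emph{strict} inequality, and since $\gamma(G)$, $\rho(G)$, and the integer coefficient $4(2g+5)$ are all integers, $\gamma(G) < 4(2g+5)\rho(G) - 8$ upgrades to $\gamma(G)\le 4(2g+5)\rho(G) - 9$. This integrality step is exactly what converts the additive constant $-8$ into the sharper $-9$; without the strictness in \cref{BMthm} one would only recover the weaker bound. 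The planar case then follows immediately: a planar graph embeds in the sphere, which has Euler genus $0$, so setting $g=0$ gives $\gamma(G)\le 20\rho(G)-9$. I do not expect any serious obstacle beyond keeping the asymmetric parameter assignment straight and justifying the rounding of the strict inequality.
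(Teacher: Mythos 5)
Your proposal is correct and follows exactly the route the paper indicates: establish that graphs of Euler genus $g$ are $K_{2g+3,3}$-minor-free, apply \Cref{BMthm} with the asymmetric assignment $q=2g+3$, $r=3$ to get $\gamma(G)<4(2g+5)\rho(G)-8$, and use integrality to upgrade the strict inequality to $\gamma(G)\le 4(2g+5)\rho(G)-9$. The paper merely cites this corollary from \cite{BM03} with a one-sentence justification of the forbidden-minor fact, so your Euler-formula verification of that fact and the explicit rounding step are a faithful expansion of the same argument rather than a different approach.
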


\subsubsection{Graphs with bounded weak coloring numbers.}
\Cref{BMthm} was generalized in \cite{D13} using the {\it weak $1$-coloring} and {\it weak $2$-coloring numbers} ($\mbox{wcol}_1(G)$ and $\mbox{wcol}_2(G)$) of a graph $G$. 
Since in this paper we will not work directly with the notion of the weak coloring number, we spare its (lenghty) definition and only give the statement of the theorem.
\begin{theorem}\cite[Theorem 4]{D13}\label{Zdenekthm}
    For any $G$ such that $\mbox{wcol}_2(G)\le c$, we have $\gamma(G)\le c^2 \rho(G)$. 
\end{theorem}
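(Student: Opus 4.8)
The plan is to fix a linear order on $V(G)$ witnessing $\mbox{wcol}_2(G)\le c$ and to exploit the associated weak-reachability sets. Recall that, given a linear order $\sigma$ on $V(G)$, a vertex $w$ is \emph{weakly $2$-reachable} from $v$ if there is a path of length at most $2$ from $v$ to $w$ on which $w$ is the $\sigma$-smallest vertex; write $R(v)$ for the set of such $w$. The hypothesis gives an order with $|R(v)|\le c$ for every $v$, and note that $v\in R(v)$. The first step is the elementary but crucial reachability lemma: if $\dist_G(u,v)\le 2$, then $R(u)\cap R(v)\neq\emptyset$. Indeed, taking the $\sigma$-minimum vertex $w$ on a shortest $u$--$v$ path, both subpaths from $u$ and from $v$ to $w$ have length at most $2$ and have $w$ as their minimum, so $w\in R(u)\cap R(v)$. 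Contrapositively, if $R(u)\cap R(v)=\emptyset$ then $\dist_G(u,v)\ge 3$, which is a convenient sufficient condition for a vertex set to be a packing.

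Next I would take a \emph{maximum} packing $P$, so that $|P|=\rho(G)$ and, by maximality, every vertex lies within distance $2$ of some $p\in P$. Fix for each vertex $u$ such a $p=p(u)$ and, using the reachability lemma, a witness $w_u\in R(u)\cap R(p(u))$. The goal is to build a dominating set $D$ with $|D|\le c^2\rho(G)$, and the two factors of $c$ are meant to appear as follows. The witnesses $w_u$ all lie in $\bigcup_{p\in P}R(p)$, a set of size at most $c\,|P|=c\,\rho(G)$, which accounts for one factor of $c$. To dominate a given $u$ I would follow the length-$\le 2$ path realizing $w_u\in R(u)$ one step away from $u$: its first vertex $\pi(u)\in N[u]$ also satisfies $\pi(u)\in N[w_u]$, so $\pi(u)$ dominates $u$ and is anchored to the bounded set $R(p(u))$. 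Grouping the vertices $u$ by their witness $w_u$ and bounding, for each witness, the number of distinct dominators $\pi(u)$ that must be selected is where the second factor of $c$ should come from.

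The main obstacle is precisely this last bound: a single vertex $w$ can have unbounded degree (a star already has $\mbox{wcol}_2=2$ with arbitrarily large degree), so controlling the number of selected dominators per witness cannot rely on degrees and must instead use both the weak-reachability structure and the fact that $P$ is a \emph{maximum} packing; otherwise the construction blows up, as one sees by replacing the maximum packing by a merely maximal one in a subdivided star. I expect the cleanest way to make this rigorous is to route the argument through the linear programs of \eqref{eq:LP-relax}: by duality their optima coincide, $\gamma_f(G)=\rho_f(G)$, and one always has $\rho(G)\le\rho_f(G)$ and $\gamma_f(G)\le\gamma(G)$. It then suffices to prove two integrality-gap bounds, each governed by the weak-reachability sets: $\gamma(G)\le c\,\gamma_f(G)$ (rounding a fractional dominating set by pushing its mass backward along weak-reachability, where $|R(v)|\le c$ limits the loss) and $\rho_f(G)\le c\,\rho(G)$ (extracting an integral packing using the disjointness criterion above). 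Combining them yields $\gamma(G)\le c\,\gamma_f(G)=c\,\rho_f(G)\le c^2\rho(G)$, and the heart of the proof is the rounding step, where the bounded size of the weak-reachability sets must be leveraged to avoid the logarithmic loss typical of covering and packing problems.
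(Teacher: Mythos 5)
This statement is not proved in the paper at all: it is imported verbatim from Dvo\v{r}\'ak's work \cite{D13}, and the authors explicitly skip even the definition of $\mbox{wcol}_2$, so the only benchmark is Dvo\v{r}\'ak's original combinatorial argument. Measured against that, your preliminaries are correct and are the standard opening moves: the sets $R(v)$ with $|R(v)|\le c$ and $v\in R(v)$, the lemma that $\dist_G(u,v)\le 2$ forces $R(u)\cap R(v)\neq\emptyset$, and the fact that a maximum packing $2$-dominates $V(G)$. But your proof stops exactly where the theorem starts. You concede that the number of dominators $\pi(u)$ charged to a single witness cannot be bounded, and your own subdivided-star example is telling: there a single witness must absorb arbitrarily many dominators, and the bound even fails outright for a merely maximal packing (take $P=\{w\}$ in the subdivided star: $\gamma$ grows like $n$ while $c^2|P|$ is constant), so any correct argument must exploit maximality of $|P|$ globally. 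Naming this ``the main obstacle'' is accurate, but you never overcome it, and this counting step \emph{is} the content of Dvo\v{r}\'ak's proof.

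The LP fallback does not close the gap, because the chain $\gamma\le c\,\gamma_f=c\,\rho_f\le c^2\rho$ (with $\gamma_f=\rho_f$ the common optimum of the relaxations in \eqref{eq:LP-relax}) rests on two lemmas, $\gamma(G)\le c\,\gamma_f(G)$ and $\rho_f(G)\le c\,\rho(G)$, for which you offer only one-line heuristics; these two statements are the theorem in disguise, and the sketched mechanisms do not run at factor $c$. For the rounding: writing $R_1(v)$ for the weakly $1$-reachable set and $z_w=\sum_{u\,:\,w\in R_1(u)}x_u$, one gets $\sum_w z_w\le c\,\gamma_f$, and for every $v$ some $w\in R_1(v)\subseteq N[v]$ with $z_w\ge 1/c$ (charge each $u\in N[v]$ to the $\sigma$-minimum of $\{u,v\}$), so the heavy vertices form a dominating set --- but of size up to $c^2\gamma_f$, not $c\,\gamma_f$; combined with any bound $\rho_f\le c\,\rho$ this yields $c^3\rho$, overshooting the target. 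For the extraction: the natural greedy (pick $u$, delete its distance-$\le 2$ ball, charge the deleted fractional mass to the sets $S_w=\{v: w\in R(v)\}$ with $w\in R(u)$) fails because $y(S_w)$ is unbounded --- in the subdivided star, $S_w$ is the whole vertex set yet carries fractional packing mass $\Theta(n)$. That the LP route is genuinely lossier is signalled in the paper itself: the remark following \Cref{Zdenekthm} records Dvo\v{r}\'ak's later bound $\gamma(G)\le 4\,\mbox{wcol}_1^4(G)\,\mbox{wcol}_2(G)\,\rho(G)$, obtained along such lines, which is far worse than $c^2$ when $\mbox{wcol}_1\approx\mbox{wcol}_2$. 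So as written your proposal is a plan with the two crucial lemmas unproven, and there is no known implementation of the factor-$c$ rounding claim at that constant (the known LP roundings, e.g.\ via degeneracy $\le\mbox{wcol}_1$, carry larger constants); to obtain $c^2$ you need the direct combinatorial argument you abandoned.
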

\begin{remark}
Later, Dvo\v{r}\'ak proposed the alternative  bound of $\gamma(G)\le 4\mbox{wcol}_1^4(G) \mbox{wcol}_2(G) \rho(G)$, which is an improvement for the case where $\mbox{wcol}_2$ is large enough compared to $\mbox{wcol}_1$~\cite{D19}.  
\end{remark}
\Cref{Zdenekthm} implies that the ratio is bounded by some constant for many minor-closed and sparse graph classes, including planar graphs and bounded treewidth graphs. For more information on the weak coloring numbers of minor-closed graph classes, see~\cite{HLMR24}.

\subsubsection{Negative results.} There are much fewer graph classes that were proven to have unbounded ratio.
Burger et al.~\cite{BHV09} observed that if $G$ is the Cartesian product of two complete graphs on $n$ vertices, then $\gamma(G) = n$ and $\rho(G) = 1$. 
This  implies that there is no constant that bounds the $\gamma/\rho$ ratio for all bipartite graphs. Another class where the  ratio $\gamma/\rho$ can be arbitrarily large is the class of graphs with arboricity 3~\cite{D19}.

\subsection{Our results}

In this work we extend the lists of graph classes with bounded and unbounded $\gamma/\rho$ ratios. Our proofs showing bounded ratios rely on a unified inductive technique, outlined in Section~\ref{sec:stronger}. We summarize the reached state of the art in Figure~\ref{fig:classes}, where our results are highlighted with a grey background or grey border. We note that this figure was inspired by a similar one in~\cite{GG24} and that our results cover most of the classes listed as open problems in \cite{GG24}.

\begin{figure}[h!]
    \centering
    \includegraphics[width=\linewidth]{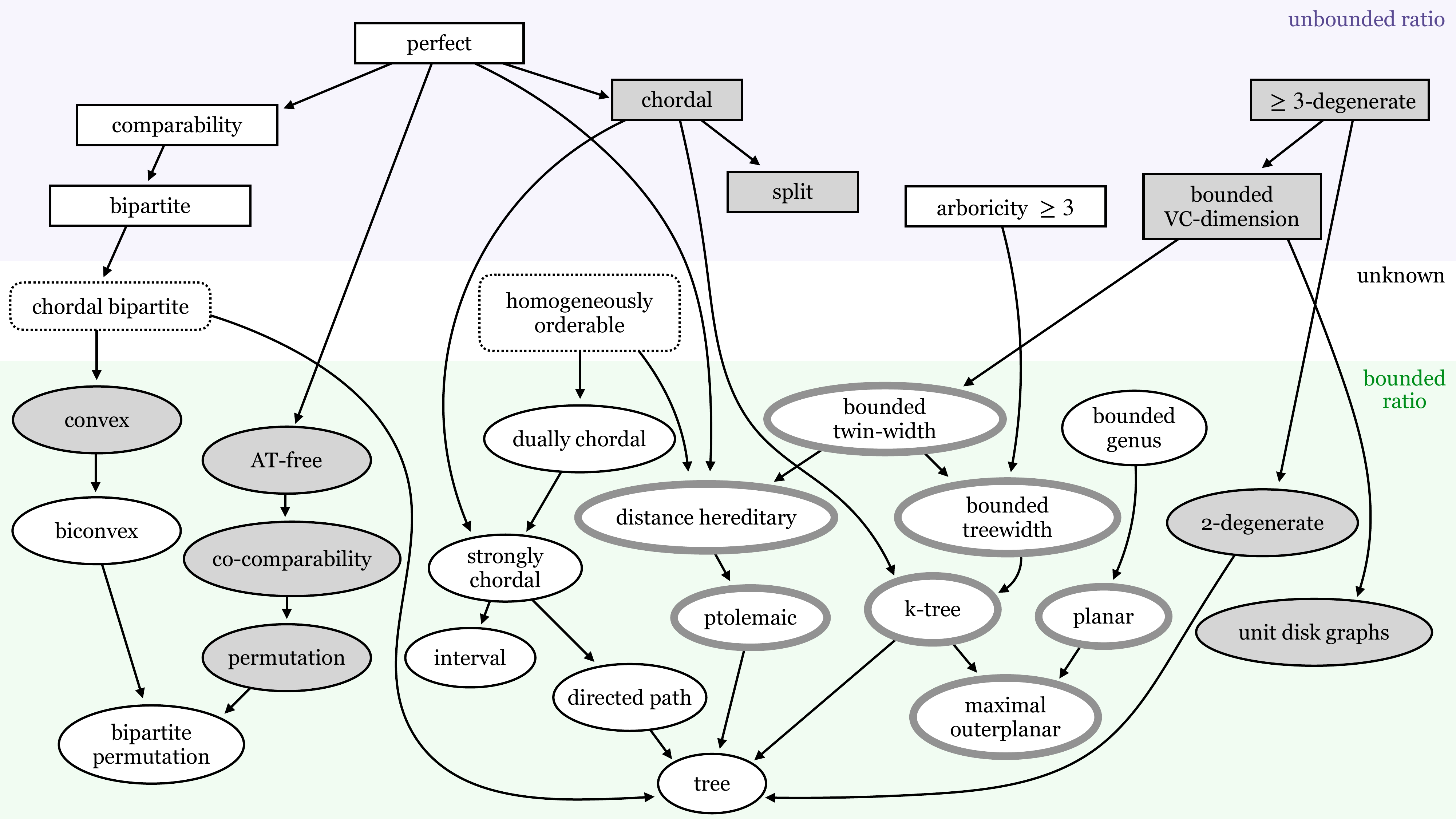}
    \caption{A visual map of graph classes for which the $\gamma/\rho$ ratio has been studied. The classes within rectangular boxes have unbounded ratio, the ones with ellipse boxes have bounded ratio and the dashed border indicates that the problem is not yet settled for the class. The boxes with the grey background highlight the classes for which we established bounds on the ratio and the thick grey borders indicate classes for which we improved the constant bound on the ratio.}
    \label{fig:classes}
\end{figure}
\subsubsection{New graph classes with bounded ratio}
Previous results have suggested that the key structural property for bounded ratio might be {\it sparsity}\footnote{The sparsity we refer to here is one which can, among other ways, be described using the weak coloring numbers.}.
We provide a positive results for the class of $2$-degenerate graphs which are known to be not sparse.

\begin{restatable}{theorem}{twodegenerate}
    For every $2$-degenerate graph $G$, we have $\gamma(G)\le 7 \cdot \rho(G)$. 
\end{restatable}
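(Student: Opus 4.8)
The plan is to set up a careful induction on the number of vertices of the $2$-degenerate graph $G$, building a packing and a dominating set simultaneously, rather than arguing about the two parameters separately. The natural handle on a $2$-degenerate graph is that it has a vertex $v$ of degree at most $2$; the idea is to locate such a low-degree vertex, place it (or a nearby vertex) into a growing packing $P$, delete a bounded-size ball around it, and recurse on the remainder. The constant $7$ suggests that in the worst local configuration we spend up to $7$ dominators to ``pay for'' one packing vertex gained. I would aim to prove a stronger, induction-friendly statement than the bare inequality $\gamma(G)\le 7\rho(G)$ — for instance, the existence of a packing $P$ together with a dominating set $D$ with $|D|\le 7|P|$, possibly with side conditions on which vertices have already been dominated, so that the inductive hypothesis can be reapplied to the residual graph.

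\textbf{The main steps, in order.} First I would fix a degeneracy ordering and take $v$ to be a vertex of degree at most $2$ (say the last vertex in the ordering, or a vertex chosen greedily to control its second neighborhood). Second, I would examine the bounded neighborhood structure around $v$: since $\deg(v)\le 2$, the vertex $v$ together with its at most two neighbors has a second neighborhood whose size I can control using $2$-degeneracy on the induced subgraph. The plan is to add $v$ to the packing $P$, then remove from $G$ all vertices within distance $2$ of $v$ (this is the set whose membership in a dominating set could be ``blocked'' by choosing to dominate via $v$), and to charge the deleted vertices to $v$. Third, I would recurse on $G' = G - N^2[v]$, which is still $2$-degenerate, obtaining by induction a packing $P'$ and dominating set $D'$ with $|D'|\le 7|P'|$; crucially, any vertex we add to $P$ is at distance at least $3$ from everything surviving in $G'$, so $P \cup P'$ remains a genuine packing. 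Fourth, I would assemble a dominating set of $G$ from $D'$ plus a constant number of vertices (at most $7$) chosen to dominate the deleted ball $N^2[v]$, and verify the count $|D|\le 7(|P'|+1)=7|P|$.

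\textbf{The main obstacle} I expect is controlling the size of the second neighborhood $N^2[v]$ that gets deleted at each step and simultaneously bounding how many dominators are needed to cover it — these two quantities pull against each other, and getting both under the single constant $7$ is where the $2$-degeneracy must be used delicately rather than as a black box. In a $2$-degenerate graph a low-degree vertex can still have neighbors of very high degree, so $N^2[v]$ can be large; the subtlety is that I should not delete all of $N^2[v]$ blindly but rather delete only enough to guarantee that $v$'s contribution to the packing is ``paid for'' while keeping the recursion clean. I would therefore expect the real work to lie in a case analysis on the local structure at $v$ (whether its neighbors have high or low degree, whether they form a triangle, etc.), choosing in each case which vertex to add to the packing and which bounded set to delete so that both the packing-disjointness invariant and the $|D|\le 7|P|$ accounting survive. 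A secondary technical point is making sure the deleted ball is always dominated by at most a constant number of vertices, which again relies on the bounded-degeneracy structure of the induced subgraph on $N^2[v]$.
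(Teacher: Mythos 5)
There is a genuine gap at the heart of your recursion, in the step where you claim that ``any vertex we add to $P$ is at distance at least $3$ from everything surviving in $G'$, so $P \cup P'$ remains a genuine packing.'' That claim only controls distances between your new vertex $v$ and $P'$; it says nothing about distances \emph{within} $P'$ measured in $G$. Deleting $N^2[v]$ can destroy common neighbors: two vertices $p,q$ at distance $3$ from $v$ may both be adjacent to some deleted $w \in N^2[v]$, hence at distance $\geq 3$ in $G' = G - N^2[v]$ but at distance $2$ in $G$. Then $P' \cup \{v\}$ is simply not a packing of $G$, and the whole accounting collapses. This is precisely the obstruction the paper isolates before introducing its machinery of $(X,Y)$-dominating sets and $(X,Y)$-packings: $X$ marks vertices that are free in the dominating set and whose neighborhoods are forbidden to the packing, which is what lets the inductive hypothesis survive vertex deletions. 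You also misplace the difficulty elsewhere: dominating the deleted ball is trivial, since $N[\{v\} \cup N(v)] \supseteq N^2[v]$, so three vertices always suffice --- the hard part is never the domination of the deleted set but the validity of the packing after deletion.

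Two further points where the actual proof diverges from your plan. First, for $2$-degenerate graphs the natural strengthened statement (constant ratio for all $(X,Y)$) is provably \emph{false}: the paper exhibits $T'_k$ with $Y$ equal to one side, where $\gamma_{X,Y}/\rho_{X,Y} \geq k$. So the paper forbids $Y$ entirely and instead proves the weighted hypothesis $\gamma_{X,\emptyset}(G) \leq 7\rho_{X,\emptyset}(G) + 2|X^{2+}| + |X^1|$, where the terms $|X^{2+}|$ and $|X^1|$ absorb the cost of marked vertices; your guess that the constant $7$ comes from ``seven dominators per packing vertex'' is replaced by an accounting of $3$ new dominators plus a change of $4$ in the $X$-weight. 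Second, the packing vertex is \emph{not} the degree-$\leq 2$ vertex itself: in a minimal counterexample every vertex outside $X$ is forced to have degree at least $3$, all low-degree vertices get absorbed into $X$, and the proof instead peels three layers $A_1, A_2, A_3$ of iterated low-degree vertices and performs a local surgery around a vertex $u \in A_3$ (deleting $\{u, v_1, x_1\}$, attaching a new pendant vertex to preserve distance constraints, and updating $X$). Your instinct that ``one should not delete all of $N^2[v]$ blindly'' and that a case analysis on local structure is needed is sound, but without the marked-set mechanism --- or some equivalent device preventing surviving packing vertices from having deleted common neighbors --- the induction as you set it up does not close.
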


\noindent
We further extend the list of bounded ratio graph classes with asteroidal triple-free graphs, convex graphs, and intersection graphs of unit disks in the plane. 

\begin{restatable}{theorem}{ATfree}
    \label{th:ATfree}~
    \begin{enumerate}
        \item For every asteroidal triple-free graph $G$, we have $\gamma(G) \leq 3 \cdot \rho(G)$.
        \item For every convex graph $G$, we have $\gamma (G) \le 3 \cdot \rho (G)$.
        \item For every unit-disk graph $G$, we have $\gamma(G) \leq 32 \cdot \rho(G)$.        
    \end{enumerate}
    \end{restatable}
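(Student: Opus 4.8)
The plan is to exploit the essentially linear structure of AT-free graphs through the classical \emph{dominating pair} theorem of Corneil, Olariu and Stewart: every connected AT-free graph contains a pair of vertices $(x,y)$ such that every path between $x$ and $y$ is a dominating set. Since $\gamma$ and $\rho$ are additive over connected components, I would first reduce to the connected case. Fixing such a dominating pair $(x,y)$, let $P=(x=v_0,v_1,\dots,v_d=y)$ be a \emph{shortest} $x$--$y$ path, where $d=\dist(x,y)$. On the one hand $P$ is a dominating set, so $\gamma(G)\le d+1$. On the other hand, along a geodesic we have $\dist(v_i,v_j)=|i-j|$, so $v_0,v_3,v_6,\dots$ are pairwise at distance at least $3$ and form a packing of size $\lfloor d/3\rfloor+1$, giving $\rho(G)\ge\lfloor d/3\rfloor+1$. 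Writing $d=3t+r$ with $r\in\{0,1,2\}$ yields $d+1\le 3(t+1)\le 3\rho(G)$, which is exactly the claim. The only nontrivial ingredient is the dominating pair theorem; the rest is the elementary ``every third vertex of a geodesic is a packing'' observation.

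\paragraph{Convex graphs.} Here AT-freeness genuinely fails (the graph obtained by subdividing each edge of a claw is convex yet contains an asteroidal triple), so I would argue directly from convexity. Write $G=(A\cup B,E)$ with $B=\{1,\dots,n\}$ ordered so that each $a\in A$ has neighbourhood an interval $[l_a,r_a]$, viewing $A$-vertices as intervals and $B$-vertices as points. After discarding isolated vertices (which contribute equally to $\gamma$ and $\rho$), I would build $D=S_A\cup S_B$, where $S_A\subseteq A$ is a minimum set of intervals covering all points (dominating $B$) and $S_B\subseteq B$ is a minimum set of points piercing all intervals (dominating $A$); hence $\gamma(G)\le\tau_1+\tau_2$ where $\tau_1=|S_A|,\ \tau_2=|S_B|$. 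The two terms are controlled by two different packings. For $\tau_2$, classical interval duality gives $\tau_2=\nu$, the maximum number of pairwise disjoint intervals; disjoint intervals share no $B$-neighbour, hence lie at pairwise distance at least $3$ and form a packing, so $\tau_2\le\rho(G)$. For $\tau_1$, I would run the standard left-to-right greedy cover producing leftmost-uncovered points $p_1<p_2<\cdots<p_m$ with $m=\tau_1$; the greedy maximality of the chosen right endpoints forces that no interval contains both $p_i$ and $p_{i+2}$, so $\{p_1,p_3,p_5,\dots\}$ is a packing of size at least $m/2$, giving $\tau_1\le 2\rho(G)$. Combining, $\gamma(G)\le\tau_1+\tau_2\le 3\rho(G)$. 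I expect the main obstacle to be the bound $\tau_1\le 2\rho$: locating the packing hidden inside the greedy cover and justifying the ``no common interval'' property.

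\paragraph{Unit-disk graphs.} For the geometric case I would fix a representation and overlay an axis-parallel grid of square cells of side $1/\sqrt 2$, so that each cell has diameter $1$ and its vertices form a clique. Picking one representative per non-empty cell gives a dominating set, since every vertex is adjacent to its cell's representative; thus $\gamma(G)\le t$, where $t$ is the number of non-empty cells. To lower-bound $\rho$, I would group cells by the residue of their grid coordinates modulo $4$, yielding $16$ classes. Within a single class any two distinct cells have closest points at Euclidean distance at least $3/\sqrt 2>2$, so their representatives share no common neighbour and are at graph-distance at least $3$; each class therefore induces a packing. By pigeonhole some class contains at least $t/16$ non-empty cells, so $\rho(G)\ge t/16$ and $\gamma(G)\le 16\rho(G)\le 32\rho(G)$, establishing the stated bound. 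The crux here is purely quantitative: choosing the cell size (to force cliques) and the modulus (to force the Euclidean gap $>2$ needed for a packing) so as to keep the constant small; even a looser choice of either parameter still lands within the claimed factor of $32$.
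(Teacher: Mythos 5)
Your AT-free argument is essentially the paper's own proof: the Corneil--Olariu--Stewart dominating-pair theorem, a shortest path between the pair, and every third vertex of the geodesic as a packing; your accounting $d+1\le 3(\lfloor d/3\rfloor+1)$ is if anything slightly tidier than the paper's. For convex graphs, however, you take a genuinely different route. The paper fixes a single extremal object --- a maximal packing $P$ chosen to minimize the total length of its intervals --- and builds a dominating set of size at most $3|P|$ by adding, for each interval of $P$, its two endpoint vertices, and for each point of $P$, the earliest-starting and latest-ending intervals through it. You instead decompose domination into two classical one-dimensional problems: piercing all intervals, where interval duality gives $\tau_2=\nu$ and any family of pairwise disjoint intervals is a packing (same side of the bipartition, no common point-neighbour, hence distance at least $4$), so $\tau_2\le\rho(G)$; and covering all points, where the greedy cover's witness points $p_1<\dots<p_m$ satisfy the key property you identified --- an interval containing both $p_i$ and $p_{i+2}$ would, by consecutiveness of neighbourhoods and the greedy's rightmost-endpoint rule, have covered $p_{i+2}$ at step $i$ --- so the alternate witnesses form a packing and $\tau_1\le 2\rho(G)$. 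Both proofs are correct and both give the constant $3$; yours is more modular and leans on standard interval facts, while the paper's single-packing construction avoids comparing two separate packings against $\rho$. Your observation that convex graphs need not be AT-free (subdivided claw) is correct and rightly justifies the separate treatment.

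For unit disks your shifted-grid pigeonhole is again different from the paper, which takes a maximum packing $P$, uses $\bigcup_{d\in P} N[N[d]]=V(G)$, and covers the radius-$5$ disk around each packing element by $32$ unit disks; your method in fact yields the better constant $16$. One slip needs fixing: the paper defines unit-disk graphs via disks of \emph{radius} $1$, so adjacency means centers at distance at most $2$, and a common neighbour is excluded only when centers are more than $4$ apart; your inference ``separation $3/\sqrt{2}>2$, hence no common neighbour'' is false under that convention --- two representatives at distance about $2.12$ can easily share a neighbour. Your parameters implicitly assume unit-\emph{diameter} disks (adjacency threshold $1$). Since the graph class is invariant under rescaling the representation, this is cosmetic rather than fatal: either state that you rescale so that adjacency corresponds to center distance at most $1$, or double all lengths --- cells of side $\sqrt{2}$ (diagonal $2$, so each cell is still a clique) and the same mod-$4$ residue classes (separation $3\sqrt{2}>4$, so no common neighbours) --- after which the identical count gives $\gamma(G)\le 16\,\rho(G)\le 32\,\rho(G)$.
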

 
\noindent

\subsubsection{Improved constants}
The classes of graphs with bounded tree-width, planar graphs, and outerplanar graphs have constant weak coloring numbers, therefore by Theorem~\ref{Zdenekthm}, the $\rho/\gamma$ ratio is bounded by a constant for each of them.
    However, the precise constants obtained for these classes from the result in \cite{D13, D19} often not optimal for subclasses that have a lot of additional structure. We propose direct proofs for the above classes, resulting in very small constant bounds on the ratio.
For graphs with bounded tree-width (and therefore $k$-trees) we show the following bound. 

\begin{restatable}{theorem}{boundedTW}\label{thm:tw-dompack}
    Let $k\ge 1$ be an integer. For every graph $G$ with tree-width at most $k$, we have $\gamma(G)\le k\cdot \rho(G)$. 
\end{restatable}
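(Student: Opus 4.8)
The plan is to fix a tree decomposition of $G$ of width at most $k$, so that every bag has at most $k+1$ vertices, and then run an induction that \emph{never deletes vertices from} $G$. Keeping $G$ fixed is the crucial device: it guarantees that the packing condition is always checked by distances in the one ambient graph, thereby sidestepping the usual subtlety that distances can only grow when vertices are removed. Concretely, I would prove by induction on $|F|$ the following strengthening. For every $F\subseteq V(G)$ there exist $D\subseteq V(G)$ and $P\subseteq F$ such that $F\subseteq N_G[D]$, the set $P$ is a packing in $G$, and $|D|\le k|P|$. Applying this with $F=V(G)$ produces a dominating set $D$ and a packing $P$ with $\gamma(G)\le |D|\le k|P|\le k\rho(G)$, which is the theorem; the base case $F=\emptyset$ is immediate with $D=P=\emptyset$.

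The inductive step rests on a local claim, which is where the tree-decomposition structure enters. Writing $N^2[p]$ for the set of vertices at distance at most two from $p$ in $G$, the claim is: if $F\neq\emptyset$, then there is a vertex $p\in F$ and a set $D_p\subseteq V(G)$ with $|D_p|\le k$ such that $D_p$ dominates $F\cap N^2[p]$. Granting this, I set $F'=F\setminus N^2[p]$, so that $p\notin F'$ and $|F'|<|F|$, apply the induction hypothesis to get $D',P'$, and output $D=D_p\cup D'$ and $P=\{p\}\cup P'$. Then $P$ is a packing in $G$: the vertices of $P'\subseteq F'$ avoid $N^2[p]$ and hence lie at distance at least three from $p$, while $P'$ is pairwise at distance at least three by induction. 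Moreover $F=(F\cap N^2[p])\cup F'\subseteq N_G[D_p]\cup N_G[D']=N_G[D]$, and $|D|\le k+k|P'|=k|P|$, closing the step.

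To prove the local claim I root the tree decomposition and let $t(v)$ denote the highest bag containing $v$; I choose $p\in F$ with $t(p)=:B$ as deep as possible among vertices of $F$, and take $D_p=B\setminus\{p\}$, of size at most $k$. The engine is the standard fact that the bags containing two adjacent vertices $u,v$ form meeting subtrees, so $t(u)$ and $t(v)$ are comparable and the endpoint whose top-bag is higher lies in the other's top-bag. For a free neighbour $w$ of $p$ this gives $w\in B$ at once; for a free vertex $w$ at distance two, say $p\sim y\sim w$, I split on $\mathrm{depth}(t(y))$: if $t(y)$ is not below $B$ then $y\in B\setminus\{p\}$ and $y$ dominates $w$, whereas if $t(y)$ lies strictly below $B$ then $p,w\in t(y)$ and tracing the subtree of $w$ up through $B$ forces $w\in B$. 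Thus $B\setminus\{p\}$ dominates all of $F\cap N^2[p]$ except possibly $p$ itself. If $p$ has a neighbour in $B$ we are done; otherwise every neighbour of $p$ has a strictly deeper top-bag, which by the same analysis forces $F\cap N^2[p]\subseteq B$, and then for a neighbour $y$ of $p$ and a vertex $w$ with $p\sim y\sim w$ the set $\{y\}\cup(B\setminus\{p,w\})$ has size at most $k$ and dominates $F\cap N^2[p]$, since $y$ covers both $p$ and $w$ while the remaining bag vertices cover themselves. I expect the distance-two analysis to be the main obstacle; once it is in place, both the local claim and the induction go through.
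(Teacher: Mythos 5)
Your proof is correct, and it takes a genuinely different route from the paper's. The paper proves the stronger statement $\gamma_{X,Y}(G)\le k\cdot\rho_{X,Y}(G)$ for all $X,Y\subseteq V(G)$ (\Cref{th:treewidth}) by a minimal-counterexample argument: it passes to a chordal completion $G^+$ with $\omega(G^+)\le k+1$, forces the simplicial vertices $A_1$ into $Y$ via \Cref{lem:smalldegY}, picks a second-round simplicial vertex $v\in A_2$, deletes it, and transfers a dominating-packing pair back, with the sets $X'=C_1$ and $Y'=Y\cup C_2$ absorbing exactly the distance distortions that deletion causes. You sidestep deletion entirely: you keep $G$ fixed, induct on a set $F$ of vertices still to be dominated, and prove a local claim that the vertex $p\in F$ whose top bag $B$ in a rooted tree decomposition is deepest has $F\cap N^2[p]$ dominated by at most $k$ vertices essentially drawn from $B$; removing $N^2[p]$ from $F$ then makes the packing condition checkable by distances in the one ambient graph, which is precisely the difficulty the paper's $(X,Y)$ machinery was built to handle. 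Your bag analysis is sound: comparability of the top bags of adjacent vertices, the fact that the endpoint with the higher top bag lies in the other's top bag, and the tracing argument showing that in the distance-two case the path from $t(y)$ up to $t(w)$ passes through $B$ inside the subtree of $w$, forcing $w\in B$, are all correct. What your approach buys is a self-contained proof of this one theorem needing none of the paper's Section~\ref{sec:stronger} lemmas; what the paper's approach buys is a reusable framework it deploys again for planar graphs, twin-width, and $2$-degenerate graphs. Two trivial edge cases should be made explicit in a full write-up of your final branch: the vertex $w$ there must be chosen in $F\cap N^2[p]$ (so that $w\in B$ and the count $\lvert\{y\}\cup(B\setminus\{p,w\})\rvert\le 1+(k+1)-2=k$ goes through; an arbitrary $w\notin B$ would give size $k+1$), and if $F\cap N^2[p]=\{p\}$ you instead take $D_p=\{y\}$ for any neighbour $y$ of $p$, or $D_p=\{p\}$ if $p$ is isolated.
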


Note that the above bound also improves the bound on $\gamma/\rho$ shown in \cite{GG24} from $3$ to $2$ where $G$ is an outerplanar graph. 
For planar graphs we show the following bound. 

\begin{restatable}{theorem}{planar}\label{thm:planar-dompack}
    For every planar graph $G$, we have $\gamma(G)\le 10 \cdot \rho(G)$. 
\end{restatable}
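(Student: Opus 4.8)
The plan is to reduce the statement to a purely local domination task via a maximum packing, and then to control that task with Euler's formula. First I would fix a maximum packing $P$ with $|P| = \rho(G)$. Since a maximum packing is in particular maximal, no vertex can be added to $P$ without violating the distance-$3$ condition, so every vertex of $G$ lies within distance two of $P$; that is, $V(G) = N[N[P]]$. Taking $P$ itself into the dominating set already dominates $N[P]$, so the whole problem reduces to dominating the ``second shell'' $W = V(G)\setminus N[P]$, each vertex of which is at distance exactly two from $P$. If $W$ can be dominated with at most $9\cdot\rho(G)$ further vertices, then $P$ together with these witnesses a dominating set of size at most $10\,\rho(G)$.

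For the second step I would pass to the planar minor $G'$ obtained by contracting each closed neighbourhood $N[p]$, $p\in P$, to a single vertex $b_p$ (the $N[p]$ are pairwise disjoint by the packing property) while keeping the vertices of $W$; then $w\in W$ is adjacent in $G'$ to exactly those $b_p$ with $\dist_G(w,p)=2$. The idea is to charge the additional dominators against the planar structure of $G'$ through Euler's formula ($|E|\le 3|V|-6$, or $|E|\le 2|V|-4$ on bipartite incidences), precisely where B\"ohme and Mohar used $K_{3,3}$-minor-freeness to obtain the weaker estimate $\gamma(G) < 20\,\rho(G)-9$ of \Cref{BMthm}. Working with Euler's inequality directly, rather than routing through an excluded minor, is what I expect to recover the factor of two and to remove the additive slack.

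The main obstacle, and the heart of the argument, is that $W$ can be arbitrarily large relative to $\rho(G)$, so it cannot merely be counted --- it must be dominated economically. Here the maximum-packing property has to be invoked a second time: any subset of $W$ whose vertices are pairwise at distance at least three is itself a packing, hence of size at most $\rho(G)$, and this is the lever that converts a huge shell into a bounded number of dominators. I would classify the vertices of $W$ by how many balls $b_p$ they see in $G'$; those seeing several balls are constrained by the planarity of $G'$ and can be billed to its edges via Euler's formula, while those seeing a single ball are governed by the packing bound. Making these two regimes interact correctly --- so that the shell of each ball is charged only a constant number of dominators, in the spirit of the inductive region-peeling scheme of Section~\ref{sec:stronger} --- and then tuning every constant so that the contributions sum to $9\,\rho(G)$, hence $10\,\rho(G)$ in total, is the delicate part; the absence of any additive term in the target (unlike the $-9$ in \Cref{BMthm}) further suggests that a careful treatment of small cases and of connectivity will be required.
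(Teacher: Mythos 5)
There is a genuine gap: your plan correctly reduces the problem to dominating the second shell $W = V(G)\setminus N[P]$ with at most $9\rho(G)$ vertices, but neither of your two charging regimes actually delivers this, and the ``delicate part'' you defer is the entire content of the theorem. For the multi-ball regime, Euler's formula applied to the contracted bipartite incidence structure between $W$ and the $\rho(G)$ ball-vertices is vacuous: if every vertex of $W$ sees at least two balls, then $2|W| \le |E(G')| \le 2\left(|W| + \rho(G)\right) - 4$ collapses to $0 \le 2\rho(G)-4$, giving no bound on $|W|$ at all --- and indeed $|W|$ can be arbitrarily large even with two packing vertices (take many internally disjoint paths of length four between them), so shell vertices cannot be ``counted'' against edges; only their domination cost can be bounded, and you give no mechanism for that. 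For the single-ball regime, your stated lever --- that any pairwise distance-$\ge 3$ subset of $W$ is itself a packing of size at most $\rho(G)$ --- yields nothing, because all shell vertices attached to a single ball $N[p]$ lie pairwise within distance four (often two) of one another through $p$, so no large sub-packing can be extracted from them. The paper's own split-graph construction (\Cref{th:negativesplit}, with $\rho(S_k)=1$ and $\gamma(S_k)=k$) shows that maximality of the packing alone can never bound the per-ball domination cost; planarity must enter the single-ball regime in an essential way, which your sketch does not supply.

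For contrast, the paper's proof takes a completely different route, avoiding any explicit packing-to-dominating-set construction at the top level. It proves the stronger statement $\gamma_{X,Y}(G) \le 10\,\rho_{X,Y}(G)$ for all $X,Y$ by minimal counterexample within the framework of \Cref{sec:stronger}: the lemmas there force $X=\emptyset$, $Y$ stable, and (via \Cref{lem:smalldegY} with $C=10$, which is where dominating--packing pairs are actually built inductively) every vertex of degree at most $10$ to lie in $Y$. One then augments $G$ to a planar multigraph $G^+$ by adding edges among consecutive neighbors of each $y \in Y$, so that every vertex has at most half of its $G^+$-neighbors in $Y$; since planar graphs always contain a vertex of degree at most $5$, a two-layer peeling argument (the sets $A_1$ and $A_2$ of the proof) produces a vertex outside $Y$ of degree at most $2\cdot 5 = 10$, a contradiction. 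If you want to salvage your approach, the inductive $(X,Y)$ machinery is precisely what handles the distance distortion that your contraction-based argument leaves uncontrolled.
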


For the case of graphs with bounded twin-width, \cite[Section 7]{BGKTW24} showed that both LP-relaxations (given in Equation \eqref{eq:LP-relax}) have bounded integrality gap, which implies that the $\gamma/\rho$ ratio must also be bounded by a constant (but no explicit bound is given there). Here we give direct proof of the following statement.
\begin{restatable}{theorem}{twinwidth}\label{thm:tww-dompack}
    Let $k\ge 2$ be an integer. For every graph $G$ with twin-width at most $k$, we have $\gamma(G)\le 4k^2\cdot \rho(G)$. 
\end{restatable}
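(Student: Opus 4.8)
The plan is to work directly with a width-$\le k$ contraction sequence witnessing the twin-width bound, reading it as a sequence of vertex partitions $\mathcal{P}_n,\dots,\mathcal{P}_1$ of $V(G)$: here $\mathcal{P}_n$ consists of singletons, $\mathcal{P}_1=\{V(G)\}$, and each $\mathcal{P}_{i-1}$ arises from $\mathcal{P}_i$ by merging two parts. The governing structural fact is the red-degree bound: in every $\mathcal{P}_i$, each part $A$ is \emph{homogeneous} towards all but at most $k$ other parts, meaning that for all but at most $k$ parts $B$ the pair $(A,B)$ is either \emph{complete} (every vertex of $A$ is adjacent to every vertex of $B$) or \emph{empty} (no edges between them). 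I would exploit the complete case crucially: if $(A,B)$ is complete, then a single vertex of $A$ dominates all of $B$. The at most $k$ remaining, \emph{mixed} parts are the only source of irregularity, and the whole argument amounts to paying for them.

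First I would fix a maximum packing $P$, so that $|P|=\rho(G)$, and record the two standard consequences of maximality: the closed neighborhoods $N_G[p]$ for $p\in P$ are pairwise disjoint, and every vertex of $G$ lies at distance at most $2$ from some $p\in P$ (otherwise $P$ could be extended). The goal is then to construct a dominating set $D$ with $|D|\le 4k^2\rho(G)$, which immediately gives $\gamma(G)\le |D|\le 4k^2\rho(G)$. I would select a partition $\mathcal{P}$ from the sequence adapted to $P$ -- for instance the coarsest one in which every part still contains at most one vertex of $P$ -- and build $D$ part by part. A part possessing a complete neighbor is dominated by a single external representative vertex, so the real cost is concentrated on parts reachable only through mixed adjacencies and on the internal structure of individual parts. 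Charging these costs to the packing vertices, while using that each part has at most $k$ mixed neighbors, should yield a factor quadratic in $k$: I expect the two factors of roughly $2k$ to come, respectively, from the red-degree bound itself and from the fact that domination acts through a neighbor (one hop), so that each mixed adjacency has to be accounted for at both of its endpoints.

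The hard part will be making this charging precise and extracting the exact constant $4k^2$ without overcounting. Two obstacles stand out. The combinatorial one is that a part with no complete neighbor is dominated only from within itself and through its at most $k$ mixed neighbors; handling such parts seems to require descending recursively along the contraction sequence, which is where I expect the induction of Section~\ref{sec:stronger} to enter, while simultaneously certifying that the resulting dominators can be distributed among distinct packing vertices. The conceptual one is the mismatch between the two relevant notions: the packing and domination constraints concern graph \emph{distance}, whereas homogeneity concerns \emph{adjacency}. Bridging them -- turning the statement that every vertex is within distance $2$ of $P$ into a statement about parts and their mixed neighborhoods -- is precisely what ties the red-degree bound to the ratio, and is the step I would expect to demand the most care. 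The hypothesis $k\ge 2$ should be needed only to keep the final bound in its stated closed form.
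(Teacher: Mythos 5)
There is a genuine gap: what you have written is a plan whose decisive steps are the ones you explicitly defer, and the deferred steps are where the whole difficulty lives. Your scheme fixes a maximum packing $P$, picks one partition level of the contraction sequence, and hopes to charge dominators to packing vertices; but you give no mechanism bounding the number of parts that need individual attention. At your chosen level there can be far more parts than $|P|$, parts with no complete neighbor can be dominated only from within or through mixed pairs, and the fact that every vertex is within distance $2$ of $P$ does not translate into a part-level statement without exactly the argument you label ``the step I would expect to demand the most care.'' Your fallback --- ``descending recursively along the contraction sequence, which is where I expect the induction of Section~\ref{sec:stronger} to enter'' --- does not work as stated: the lemmas of \Cref{sec:stronger} (e.g.\ \Cref{lem:Xempty}) are proved for vertex and edge \emph{deletions} in hereditary or monotone classes, whereas the inductive step for twin-width is a \emph{contraction}, and a dominating set of the contracted graph does not pull back to $G$: if $w$ arises from contracting $u,v$ and some vertex is dominated in $G'$ only through a red edge at $w$, that adjacency may simply not exist at $u$ in $G$.

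This pull-back problem is precisely what the paper's proof is engineered around, and it is the missing idea in your proposal. The paper strengthens the induction hypothesis to \emph{black domination}: $D$ must dominate every vertex not in $Y$ via a black edge whenever that vertex has a black neighbor ($\gamma^B_{\emptyset,Y}$, \Cref{lem:twinw}). Since black edges of the contracted graph correspond to genuine edges to both $u$ and $v$ in $G$, a B-dominating set of $G'$ transfers to $G$ after replacing $w$ by $u$; the only problematic vertices are those with no black neighbor in $G'$, and these have at most $k$ black neighbors in $G$. A separate claim --- proved by deleting $N[u]$ and absorbing the second neighborhood into $Y$, at cost $|B_u|+|R_u|+|S_R|+1+|R_u|\le 2k^2+3k+1\le 4k^2$ per added packing vertex, which is where $k\ge 2$ enters --- shows every such low-black-degree vertex already lies in $Y$. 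Note also that the paper never fixes a maximum packing upfront: both $D$ and $P$ are built together down the contraction sequence, which is what lets each expensive step be paid for by a \emph{new} packing vertex. Without a substitute for the B-domination invariant (or some other device that survives contractions, such as the weak-coloring machinery of \Cref{Zdenekthm}, which however does not apply to twin-width-bounded classes since they are not sparse), your charging argument cannot be completed as proposed.
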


\subsubsection{Negative results}

We show that second part of \Cref{conj:deg3} (stated in \cite{HLR11} and studied in \cite{GG24}) does not hold. 

\begin{restatable}{theorem}{negativeConj}
    \label{thm:negativeConj}
    There is an infinite family of graphs ${\cal G}$ such that for each $G\in {\cal G}$, $\Delta(G)\le 3$ and $\gamma(G)=2\rho(G)+1$. 
\end{restatable}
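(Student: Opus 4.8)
The plan is to exhibit, for every $k\ge 1$, an explicit connected subcubic graph $G_k$ with $\rho(G_k)=k$ and $\gamma(G_k)=2k+1$; together these give $\gamma(G_k)=2\rho(G_k)+1$, and since the $G_k$ are pairwise non-isomorphic (their orders grow), the family $\{G_k\}_{k\ge1}$ refutes the uniqueness part of \Cref{conj:deg3}. The natural base case is $G_1$ being the Petersen graph, which already satisfies $\rho=1$, $\gamma=3=2\rho+1$ and has diameter $2$ on $10=8\cdot1+2$ vertices. I would build $G_k$ by linking $k$ constant-size, diameter-$\le2$ gadgets of this flavour into a single connected graph of order in $\{8k+1,\dots,8k+4\}$, keeping all degrees at most $3$ (the links are inserted at low-degree attachment points, or replace a removed edge, so that no vertex exceeds degree $3$).

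The lower bound $\gamma(G_k)\ge 2k+1$ is the easy half, and it is where the surplus $+1$ originates. Assigning weight $y_v=\tfrac14$ to every vertex gives a feasible solution of the fractional packing program, because $\Delta(G_k)\le 3$ forces $\sum_{u\in N[v]}y_u=\tfrac{|N[v]|}{4}\le 1$ for every $v$. Hence the fractional packing number is at least $|V(G_k)|/4$, and by the LP duality recalled after \eqref{eq:LP-relax} the fractional domination number equals the fractional packing number and lower-bounds $\gamma$. Choosing $|V(G_k)|$ in $\{8k+1,\dots,8k+4\}$ makes $|V(G_k)|/4>2k$, so $\gamma(G_k)\ge\lceil |V(G_k)|/4\rceil=2k+1$. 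The same computation at $k=1$ reproduces the classical bound $\gamma\ge\lceil 10/4\rceil=3$ for the Petersen graph.

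For $\rho(G_k)=k$ I would argue both directions concretely. For the upper bound I partition $V(G_k)$ into the $k$ gadgets; each gadget has diameter at most $2$ in $G_k$ (inserting the links only shortens distances), so a packing --- whose vertices are pairwise at distance at least $3$ --- meets each gadget in at most one vertex, giving $\rho(G_k)\le k$. For the matching lower bound I exhibit one vertex per gadget, chosen far from the attachment points, and verify that the resulting $k$ vertices are pairwise at distance at least $3$. Finally, for the upper bound $\gamma(G_k)\le 2k+1$ I would display an explicit dominating set: two vertices inside each gadget dominate that gadget together with its ports, and one further vertex repairs the single ``defect'' created by the two extra vertices beyond $8k$ (exactly as the third dominator is forced in the Petersen graph), for a total of $2k+1$.

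The main obstacle is the construction itself: I must realise, at every scale, the factor-$2$ gap between the integral packing number ($k$) and the fractional one ($\approx 2k$) that the Petersen graph exhibits for $k=1$. Concretely, the gadget must be dense enough to have diameter $2$ (so it admits only one packing vertex) yet require two vertices to dominate, and the links must neither create a shortcut that merges two gadgets into one diameter-$2$ region (which would drop $\rho$ below $k$) nor admit a dominating set of size $2k$ while the graph stays connected and subcubic. Verifying $\rho(G_k)\le k$ and producing the size-$(2k+1)$ dominating set for the linked graph --- rather than for the disjoint union, where both are immediate --- is the delicate part; the domination lower bound, by contrast, is essentially free from the vertex count.
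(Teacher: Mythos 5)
There is a genuine gap, and it is exactly the piece you flag yourself as ``the main obstacle'': the gadget is never constructed, and the two requirements you impose on it are in fact quantitatively inconsistent for subcubic graphs. Your lower bound $\gamma(G_k)\ge 2k+1$ is purely a counting bound (each closed neighborhood has size at most $4$, so $\gamma\ge \lceil |V|/4\rceil$; the LP detour adds nothing) and it needs $|V(G_k)|>8k$, i.e.\ gadgets of average order greater than $8$. But your upper bound $\rho(G_k)\le k$ rests on each gadget having diameter at most $2$, and a Moore-type ball count rules this out at that size: to receive a link while keeping $\Delta\le 3$, a gadget must contain a port $u$ of degree at most $2$ inside the gadget, and then a gadget of internal diameter at most $2$ has at most $1+2+2\cdot 2=7$ vertices. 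So legal diameter-$2$ gadgets force $|V(G_k)|\lesssim 7k$, where the counting bound only yields $\gamma\ge\lceil 7k/4\rceil$, well short of $2k+1$. The escape routes you hint at do not work either: starting from a $3$-regular diameter-$2$ graph and freeing ports by deleting an edge fails for the Petersen graph (girth $5$, so the endpoints of a deleted edge jump to distance $3$, and both can then enter a packing), and rescuing the intra-gadget diameter via shortcuts through the links would require a path that leaves the gadget and returns in two steps---a very specific design that you neither specify nor verify, and which would in turn threaten the ``one packing vertex per gadget'' argument. In short, the counting route to the ``$+1$'' and the diameter-$2$ route to $\rho\le k$ cannot coexist.

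It is instructive that the paper's proof accepts this tension and resolves it structurally rather than by counting. It chains $i$ six-vertex blocks ($P_6$ plus the chords $\{v_1,v_5\}$ and $\{v_2,v_6\}$: a diameter-$2$ gadget with degree-$2$ ports, respecting the $7$-vertex cap above) into a cycle closed by one extra edge $\{u_1,u_2\}$, so $|V(G_i)|=6i+2$ and the counting bound gives only about $1.5i$---far from $2i+1$. Instead, both $\rho(G_i)=i$ and $\gamma(G_i)=2i+1$ are proved by propagation around the cyclic chain: a packing of size $i+1$ would have to select a level-three vertex in every block, which becomes impossible once it also meets $\{u_1,u_2\}$; and a dominating set of size $2i$ is excluded by showing that any block meeting $D$ in few vertices forces its neighbors to use specific levels, which cannot close up consistently around the cycle. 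If you want to salvage your plan, you must replace the fractional/counting lower bound for $\gamma$ by an argument of this propagation type; in a correct construction the ``$+1$'' comes from a parity-like obstruction around the cycle, not from the vertex count.
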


We show that there is an infinite family of split graphs for which the ratio $\gamma/\rho$ is not bounded. This in particular implies that the above is not bounded for chordal graphs. 

\begin{restatable}{theorem}{negativeSplit}\label{th:negativesplit}
For every $k\ge 1$, there is a split graph $S_k$ such that $\rho(S_k)=1$ and $\gamma(S_k) = k$.
\end{restatable}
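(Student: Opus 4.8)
The plan is to exhibit $S_k$ explicitly as an incidence structure between the two-element subsets and the elements of a ground set of size $2k$. Concretely, I would let the clique $C$ consist of all two-element subsets of $\{1,\dots,2k\}$, pairwise joined by edges, and let the independent set $I=\{u_1,\dots,u_{2k}\}$ be indexed by the elements, where $u_i$ is made adjacent to exactly those pairs in $C$ that contain $i$. Since $C$ induces a clique and $I$ induces no edges, $S_k$ is a split graph by construction; it remains only to control $\rho$ and $\gamma$.

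For $\rho(S_k)=1$ I would show that $S_k$ has diameter at most $2$, which forces every two vertices to be at distance at most $2$ and hence rules out any packing of size at least $2$ (a packing needs pairwise distance at least $3$), so $\rho(S_k)=1$. The three cases are routine: two clique vertices are adjacent; a vertex $u_i$ reaches any pair $\{a,b\}$ either directly (if $i\in\{a,b\}$) or through the pair $\{i,a\}\in C$; and two independent vertices $u_i,u_j$ share the common neighbor $\{i,j\}\in C$. This last case is the crucial design feature: making the neighborhoods of the $u_i$ pairwise intersecting is exactly what keeps the diameter at $2$.

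The heart of the argument, and the delicate balance to respect, is the computation $\gamma(S_k)=k$. For the lower bound I would observe that every vertex of $S_k$ dominates at most two of the $2k$ vertices $u_1,\dots,u_{2k}$: a clique vertex $\{a,b\}$ has only $u_a,u_b$ as independent-set neighbors, while $u_i$ dominates only itself among the $u_j$. Hence any dominating set has size at least $\lceil 2k/2\rceil=k$. For the matching upper bound, the $k$ clique vertices $\{1,2\},\{3,4\},\dots,\{2k-1,2k\}$ already dominate all of $C$ (they lie in the clique) and every $u_i$ (each $i$ lies in a chosen pair), giving $\gamma(S_k)\le k$. The main obstacle to anticipate is precisely the tension this construction must navigate: diameter $2$ forces the neighborhoods of the independent vertices to pairwise intersect, and naive ways of guaranteeing this—for instance a universal clique vertex, or neighborhoods that are large complements—collapse the domination number to a constant. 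The pair/element incidence resolves this by letting each clique vertex dominate the smallest number of independent vertices, namely two, that is compatible with pairwise intersection, which is what allows $\gamma$ to grow linearly in $k$ while $\rho$ stays fixed at $1$.
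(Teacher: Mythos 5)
Your proof is correct, but your construction is genuinely different from---in fact dual to---the paper's. The paper takes a \emph{small} clique $C$ of size $2k-1$ and attaches one independent vertex for each of the $\binom{2k-1}{k}$ many $k$-subsets of $C$, adjacent to exactly that subset; the upper bound $\gamma\le k$ then comes from any $k$ clique vertices (a $k$-subset cannot avoid them, since only $k-1$ clique vertices remain uncovered), and the lower bound requires a short case analysis: if a dominating set of size $k-1$ uses $i$ vertices of the independent side, then $\binom{k+i}{k}>i$ independent vertices escape the clique part and cannot be dominated by the $i$ stable vertices. You flip the incidence structure: your clique consists of the $\binom{2k}{2}$ two-element subsets of a $2k$-element ground set, and the independent side has one vertex $u_i$ per ground element. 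This buys two things. First, a cleaner lower bound: every vertex of your graph dominates at most two of the $2k$ vertices $u_1,\dots,u_{2k}$ (a pair $\{a,b\}$ dominates only $u_a,u_b$ there, and each $u_i$ only itself), so $\gamma\ge k$ follows uniformly with no case split on where the dominating vertices live. Second, a much smaller witness: your graph has $\Theta(k^2)$ vertices, whereas the paper's has exponentially many independent vertices. Your matching upper bound via the pairs $\{1,2\},\{3,4\},\dots,\{2k-1,2k\}$ is correct (they dominate all of $C$ since $C$ is a clique, and every $u_i$ since each $i$ lies in a chosen pair), and your $\rho(S_k)=1$ argument is in substance the same as the paper's: both verify that any two vertices are adjacent or share a common neighbor, your pairwise-intersecting neighborhoods $N(u_i)\cap N(u_j)\ni\{i,j\}$ being the key design point; the boundary case $k=1$ (where your graph is $P_3$) also checks out.
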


For $3$-degenerate graphs we present a construction that shows that the ratio is unbounded in general. This example also shows the existence of graphs with bounded VC-dimension and unbounded ratio. 

\begin{restatable}{theorem}{negativeThreedegenerate}\label{th:negative3deg}
For every $k\ge 1$, there is a $3$-degenerate graph $T_k$ such that $\rho(T_k) \leq 2$ and $\gamma(T_k) \geq k$.
\end{restatable}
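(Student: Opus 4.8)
The plan is to exhibit an explicit family $T_k$ built from an independent set of \emph{points}, whose pairwise \emph{midpoints} force a large dominating set, together with a single apex vertex that keeps the packing number small. Concretely, I would fix $n=2k$ and take the vertex set $A\cup B\cup\{h\}$, where $A=\{a_1,\dots,a_n\}$ is an independent set of points, $B=\{m_{ij}:1\le i<j\le n\}$ contains one midpoint $m_{ij}$ for each pair, with $m_{ij}$ adjacent to exactly $a_i$ and $a_j$, and $h$ is an apex vertex adjacent to every midpoint in $B$ and to nothing in $A$. The intuition is that the midpoints place every two points at distance $2$, the apex places every two midpoints at distance $2$, and domination is expensive because the independent set $A$ can only be reached through the degree-$2$ midpoints.

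I would first verify $3$-degeneracy via the elimination order that lists all midpoints, then all points, then $h$: each $m_{ij}$ has exactly the three later neighbours $a_i,a_j,h$, while points and $h$ have no later neighbours, so every vertex has at most three forward neighbours (and since each midpoint has degree exactly $3$, the degeneracy is exactly $3$, even though $A$ and $h$ have large degree). Next I would bound $\rho(T_k)$ using the observation that $\rho(G)\le 2$ whenever $V(G)$ is covered by two sets of $G$-diameter at most $2$: among any three vertices, two lie in the same part and hence at distance at most $2$, so no three vertices are pairwise at distance at least $3$. Here the covering is $V(T_k)=A\cup(B\cup\{h\})$, where $A$ has diameter $\le 2$ because $a_i$ and $a_j$ share the neighbour $m_{ij}$, and $B\cup\{h\}\subseteq N[h]$ has diameter $\le 2$ because every midpoint is adjacent to $h$. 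Hence $\rho(T_k)\le 2$.

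For $\gamma(T_k)\ge k$ I would argue that dominating $A$ alone is already expensive. Since $A$ is independent and the only neighbours of $a_i$ are the midpoints $m_{ij}$ (the apex $h$ is adjacent to no point), any dominating set $D$ covers $a_i$ only by containing $a_i$ itself or one incident midpoint. A chosen point covers one point of $A$ and a chosen midpoint covers two, so writing $D_A=D\cap A$ and $D_B=D\cap B$ we get $|D_A|+2|D_B|\ge n$, whence $|D|\ge |D_A|+|D_B|\ge n/2=k$.

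The main obstacle, and the crux of the construction, is reconciling a small packing number with a large domination number, since the naive way to force $\rho\le 2$ (covering the graph with two closed neighbourhoods) is precisely a dominating set of size $2$. The apex $h$ resolves this tension: it collapses all midpoints into a single diameter-$2$ cluster, so it governs the packing number, yet because $h$ avoids the points it is useless for dominating $A$, leaving the domination cost untouched. I would finally double-check that no clever use of $h$ or of the midpoints can lower $\gamma$ below $k$, but since the counting bound above is completely insensitive to $h$, this step should be routine.
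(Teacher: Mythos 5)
Your proposal is correct and coincides with the paper's own proof: your graph (independent set $A$ of $2k$ points, one common neighbour $m_{ij}\in B$ per pair, apex $h$ adjacent exactly to $B$) is precisely the paper's $T_k$ with $v$ renamed $h$, and both key steps match---the counting bound $\gamma(T_k)\ge k$ because every vertex dominates at most two vertices of $A$, and the pigeonhole bound $\rho(T_k)\le 2$ because any two vertices of $A$ share a neighbour in $B$ while $B\cup\{h\}\subseteq N[h]$. Your explicit elimination-order check of $3$-degeneracy is a small addition the paper leaves implicit, but the approach is the same.
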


\subsection*{Organization}
In \Cref{sec:stronger} we present the proof technique and the lemmas that will be used to prove most of our positive results. In Sections \ref{subsec:treewidth} and \ref{subsec:planar} we show improved bounds for graphs with bounded tree-width and planar graphs. In \Cref{sec:twinwidth} we establish that graphs with bounded twinwidth have bounded ratio and in \Cref{sec:kdeg} we discuss $k$-degenerate graphs. In \Cref{sec:additional} we list further graph classes for which we proved a positive result, and in \Cref{sec:negative} we prove all our negative results. Finally, we give our conclusion in \Cref{sec:ccl}. 

\section{A stronger induction hypothesis}\label{sec:stronger}

One of the difficulties with using induction to prove that ${\gamma}/{\rho}$ is bounded is that deleting vertices can affect the distance (in particular, deleting the common neighbor of two vertices and applying induction may result in them both being in the packing). We circumvent this by working with a stronger induction hypothesis, as follows.

\begin{definition}[$(X,Y)$-dominating set of $G$] 
    Let $G$ be a graph and let $X,Y\subseteq V(G)$. A set $D\subseteq V(G)$ is an \textit{$(X,Y)$-dominating set of $G$} if $N[D\cup X]\cup Y=V(G)$. We denote by $\gamma_{X,Y}(G)$ the size of a smallest $(X,Y)$-dominating set of $G$. 
\end{definition}

\begin{definition}[$(X,Y)$-packing of $G$]\label{packDef}
    Let $G$ be a graph and let $X,Y\subseteq V(G)$. A set $P\subseteq V(G)$ is an \textit{$(X,Y)$-packing of $G$} if any vertex $u \in P$ satisfies the following:
\begin{enumerate}[(a)]
    \item\label{packP} for any vertex $v \in P \setminus \{ u \}$, we have $\dist(u,v)\geq 3$.
    \item\label{packX} for any vertex $x \in X$, we have $\dist(u,x) \geq 2$, or equivalently $P\cap N[X]=\emptyset$.
    \item\label{packY} for any vertex $y \in Y$, we have $\dist(u,y) \geq 1$, or equivalently $P\cap Y=\emptyset$. 
\end{enumerate}
    We denote by $\rho_{X,Y}(G)$ the size of a largest $(X,Y)$-packing of $G$.  
\end{definition}

While this may seem opaque, the intuition is simple. Informally, $X$ is a set of vertices that will be ``for free'' in the dominating set and that may have a now-deleted neighbor in the packing, while $Y$ is the next layer: a set of vertices that are already dominated ``for free'' and that may be at distance two of a now-deleted vertex in the packing.

\paragraph{Proof technique.} Let $\mathcal{G}$ be a class of graphs and $C\in \mathbb{N}$. 
In order to prove that $\frac{\gamma(G)}{\rho(G)}\leq C$ for every graph $G \in \mathcal{G}$, we prove $\frac{\gamma_{X,Y}(G)}{\rho_{X,Y}(G)}\leq C$ for every graph $G \in \mathcal{G}$ and any choice of sets  $X,Y \subseteq V(G)$. Note that setting $X=Y=\emptyset$  yields the desired bound on the ratio $\gamma(G)/ \rho(G)$. 
Our inductive proofs have the following main steps.
\begin{itemize}
    \item Assume that there is a graph $G\in \mathcal{G}$ for which there are sets $X,Y\subseteq V(G)$ such that $\gamma_{X,Y}(G)/\rho_{X,Y}(G)>C$ and consider $G\in \mathcal{G}$ such that it minimizes $|V(G)|+|E(G)|$.
    \item Choose carefully a  $G'\in \mathcal G$ that is a subgraph of $G$ and sets $X', Y'\subseteq V(G')$. By the minimality of $G$, we know that $G'$ has an $(X',Y')$-dominating set  $D'$ and an $(X',Y')$-packing  $P'$---we will often compress this and say an  $(X',Y')$-dominating-packing pair $(D',P')$---that satisfy $|D'|/|P'|\le C$.
    \item Using $(D',P')$, define an $(X,Y)$-dominating-packing pair $(D,P)$ for $G$ that satisfies $|D|/|P|\le C$, reaching a contradiction. 
\end{itemize}

\subsection{Properties of $(X,Y)$-packings and $(X,Y)$-dominating sets}

The following lemmas state some properties of $(X,Y)$-packings and $(X,Y)$-dominating sets that will be particularly useful for induction purposes. Their proofs also follow the above outlined proof technique.

\begin{lemma}\label{lem:Xempty}
    Let $\mathcal{G}$ be a hereditary class of graphs and $C\in \mathbb{N}$. For any $G\in \mathcal{G}$ and $X,Y \subseteq V(G)$ such that $\frac{\gamma_{X,Y}(G)}{\rho_{X,Y}(G)}>C$ and $|V(G)|$ is minimal, we have $X=\emptyset$.
\end{lemma}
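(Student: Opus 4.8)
The plan is to argue by contradiction, exploiting the minimality of $|V(G)|$ exactly as in the proof technique above. Suppose toward a contradiction that $X \neq \emptyset$ and fix a vertex $x \in X$. Since $\mathcal{G}$ is hereditary, the induced subgraph $G' \eqdef G - x$ lies in $\mathcal{G}$ and has $|V(G')| = |V(G)| - 1 < |V(G)|$. The guiding intuition is that a vertex of $X$ only ever acts by dominating its closed neighborhood for free and by forbidding the packing to meet that closed neighborhood; both effects can be relocated onto the smaller graph $G'$ by recording the former neighbors of $x$ in the ``already dominated'' set $Y$. Concretely, I would set $X' \eqdef X \setminus \{x\}$ and $Y' \eqdef (Y \cup N_G(x)) \setminus \{x\}$, so that $X', Y' \subseteq V(G')$.

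First I would check that every $(X',Y')$-dominating set $D'$ of $G'$ is also an $(X,Y)$-dominating set of $G$, yielding $\gamma_{X,Y}(G) \le \gamma_{X',Y'}(G')$. This is routine: $x$ is covered because $x \in X$; any $v \in V(G')$ dominated through $N_{G'}[D' \cup X']$ stays dominated in $G$ since $N_{G'}[u] \subseteq N_G[u]$ and $X' \subseteq X$; and any $v \in Y'$ lies either in $Y$ or in $N_G(x) \subseteq N_G[X]$. Symmetrically, I would verify that every $(X',Y')$-packing $P'$ of $G'$ is an $(X,Y)$-packing of $G$, giving $\rho_{X,Y}(G) \ge \rho_{X',Y'}(G')$; the conditions $P' \cap N_G[X] = \emptyset$ and $P' \cap Y = \emptyset$ follow quickly from $P' \cap N_{G'}[X'] = \emptyset$, $P' \cap Y' = \emptyset$, the inclusion $N_G(x) \subseteq Y'$, and $x \notin V(G')$.

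The step I expect to be the crux is checking that $P'$ still satisfies the distance-at-least-$3$ condition in $G$: passing from $G'$ to $G$ reinserts $x$, which could a priori shorten distances between packing vertices. This is precisely where placing $N_G(x)$ into $Y'$ is essential. For distinct $u, v \in P'$ a hypothetical $u$--$v$ walk of length at most $2$ in $G$ cannot be a single edge (such an edge would already exist in $G'$), and if it were a path $u$--$w$--$v$ then $w = x$ is forced (any other midpoint survives in $G'$); but then $u, v \in N_G(x) \subseteq Y'$, contradicting $P' \cap Y' = \emptyset$. Hence $\dist_G(u,v) \ge 3$ and the packing condition is preserved.

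Finally I would combine these inequalities with minimality. As $G'$ is strictly smaller, it is not a counterexample, so $\gamma_{X',Y'}(G') \le C \cdot \rho_{X',Y'}(G')$, and therefore
\[
\gamma_{X,Y}(G) \;\le\; \gamma_{X',Y'}(G') \;\le\; C \cdot \rho_{X',Y'}(G') \;\le\; C \cdot \rho_{X,Y}(G).
\]
This contradicts the defining property $\gamma_{X,Y}(G) > C \cdot \rho_{X,Y}(G)$ of a counterexample (phrasing the hypothesis in this product form conveniently avoids the degenerate case $\rho_{X,Y}(G) = 0$, which forces $\gamma_{X,Y}(G) = 0$ and hence is never a counterexample). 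Thus no vertex $x \in X$ can exist, i.e.\ $X = \emptyset$.
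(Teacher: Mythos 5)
Your proof is correct and follows essentially the same route as the paper's: delete a vertex $x \in X$, pass to $X' = X \setminus \{x\}$ and $Y' = Y \cup N_G(x)$ on $G - x$, and transfer the dominating-packing pair back to $G$, with the same key observation that two packing vertices could only come within distance $2$ via the reinserted vertex $x$, which is ruled out by $N_G(x) \subseteq Y'$. Your extra touches---trimming $x$ from $Y'$ so that $Y' \subseteq V(G')$, and noting that $\rho_{X,Y}(G)=0$ forces $\gamma_{X,Y}(G)=0$ so the product form avoids the degenerate ratio---are minor refinements of the paper's argument, not a different one.
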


\begin{proof}
    Assume to the contrary that $X\neq \emptyset$ and let $a \in X$. 
    Define $X'=X\setminus \{a\}$, $Y'=Y \cup N(a)$, and $G'=G\setminus \{a\}$. 
    Furthermore, let $D'$ be the smallest $(X',Y')$-dominating set in $G'$ and let $P'$ be the largest $(X',Y')$-packing in $G'$. 
    By minimality of $G$, we have $|D'| \leq C\cdot |P'|$. 
    
    Firstly, note that $D=D'$ is an $(X,Y)$-dominating set in $G$. Indeed by the assumption on $D'$, $N_{G\setminus \{a\}}[D'\cup (X\setminus \{a\})]\cup (Y \cup N_G(a))=V(G)\setminus \{a\}$ and so $N_G[D\cup X]\cup Y=V(G)$ using the assumption that $a\in X$. 

    Secondly, we show that $P=P'$ is an $(X,Y)$-packing of $G$, by proving that it has all the properties listed in \Cref{packDef}. 
    Let $u,v\in P$, then by the properties of $P'$, $d_G(u,v)\ge 3$, unless $\{u,v\}\subseteq N(a)$, but this contradicts Property \eqref{packY} of $P'$, as $P'\cap Y'=P\cap (Y\cup N(a))=\emptyset$. Hence Property \eqref{packP} holds for $P$. 
    Let $u\in P$ and $x\in X$, then $d_G(u,x)\ge 2$ as $N(a)\cap P=\emptyset$ by Property \eqref{packY} of $P'$. Hence Property \eqref{packX} holds for $P$. 
    Let $u\in P$ and $y\in Y$, then as $P=P'$, Property \eqref{packY} holds for $P$. 

    These two observations imply that 
    $
    \frac{\gamma_{X,Y}(G)}{\rho_{X,Y}(G)} \leq \frac{|D|}{|P|} = \frac{|D'|}{|P'|}\leq C\, ,
    $
    reaching a contradiction to the assumption that $\frac{\gamma_{X,Y}(G)}{\rho_{X,Y}(G)}>C$ and thus we conclude $X=\emptyset$. 
\end{proof}

\begin{lemma}\label{lem:Ystable}
    Let $\mathcal{G}$ be a monotone class of graphs and $C\in \mathbb{N}$. For any $G\in \mathcal{G}$ and $X,Y \subseteq V(G)$ such that $\frac{\gamma_{X,Y}(G)}{\rho_{X,Y}(G)}>C$ and $|V(G)|+|E(G)|$ is minimal, we have that $G[Y]$ is an independent set.   
\end{lemma}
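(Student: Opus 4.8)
The plan is to mimic the minimal-counterexample strategy of \Cref{lem:Xempty}, but now exploiting that $\mathcal{G}$ is \emph{monotone} so that we may delete an \emph{edge} rather than a vertex. Assume for contradiction that $G[Y]$ is not independent, and fix an edge $e=\{a,b\}$ with $a,b\in Y$. Let $G'$ be obtained from $G$ by deleting the edge $e$, and set $X'=X$ and $Y'=Y$. Since $\mathcal{G}$ is monotone, $G'\in\mathcal{G}$, and $|V(G')|+|E(G')|<|V(G)|+|E(G)|$; so by minimality of $G$ there is an $(X',Y')$-dominating-packing pair $(D',P')$ of $G'$ with $|D'|\le C\cdot |P'|$. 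I would then show that the \emph{same} sets serve as an $(X,Y)$-dominating-packing pair of $G$, which contradicts $\gamma_{X,Y}(G)/\rho_{X,Y}(G)>C$.

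For the dominating set, deleting an edge only shrinks closed neighborhoods, i.e.\ $N_{G'}[v]\subseteq N_G[v]$ for every $v$. Hence $N_{G'}[D'\cup X]\cup Y=V(G')$ immediately gives $N_G[D'\cup X]\cup Y=V(G)$, so $D=D'$ is an $(X,Y)$-dominating set of $G$. This direction is routine.

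The crux is verifying that $P=P'$ remains an $(X,Y)$-packing of $G$, the difficulty being that reinserting $e$ can only \emph{decrease} distances and might therefore destroy the distance conditions of \Cref{packDef}. The key observation is that $a,b\in Y$, so by Property~\eqref{packY} of $P'$ we have $P\cap Y=\emptyset$ and in particular $a,b\notin P$. Now any shortest path in $G$ that is strictly shorter than in $G'$ must use the edge $e$. For vertices $u,v\in P$ with $\dist_{G'}(u,v)\ge 3$, I would argue that a $u$--$v$ path in $G$ of length at most $2$ must contain $e$ among its (at most two) edges, which forces $u$ or $v$ to be an endpoint of $e$, hence to lie in $\{a,b\}\subseteq Y$---contradicting $P\cap Y=\emptyset$. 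Thus $\dist_G(u,v)\ge 3$ and Property~\eqref{packP} survives. The same reasoning handles Property~\eqref{packX}: the only new adjacency created by reinserting $e$ is between $a$ and $b$, so if some $u\in P$ became adjacent (distance $1$) to some $x\in X$ in $G$, then $\{u,x\}=\{a,b\}$, again placing $u\in Y$, a contradiction; hence $\dist_G(u,x)\ge 2$. Property~\eqref{packY} is unaffected since $Y$ and $V(G)$ are unchanged.

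With both $D$ and $P$ valid for $G$, we obtain $\gamma_{X,Y}(G)/\rho_{X,Y}(G)\le |D|/|P|=|D'|/|P'|\le C$, contradicting the hypothesis, so $G[Y]$ is independent. The only subtle step---and the one I would write out most carefully---is the distance argument showing that reinserting an edge with both endpoints in $Y$ cannot create a forbidden short connection between two packing vertices, or between a packing vertex and $X$, precisely because packing vertices avoid $Y$.
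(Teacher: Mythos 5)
Your proposal is correct and takes essentially the same approach as the paper: delete the edge whose endpoints both lie in $Y$, invoke minimality (via monotonicity of $\mathcal{G}$) to obtain a pair $(D',P')$ with $|D'|\le C\cdot|P'|$, and transfer it back to $G$ unchanged. Your detailed distance argument for why $P'$ survives reinsertion of the edge is precisely what the paper compresses into the one-line observation that $P\cap Y=\emptyset$.
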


\begin{proof}
    Assume to the contrary that there is $y_1y_2 \in E(G[Y])$. Let $G'=G\setminus \{y_1y_2\}$, and let $D$ be a minimum $(X,Y)$-dominating set of $G'$ and $P$ be a maximum $(X,Y)$-packing of $G'$. By the minimality of $G'$, $|D| \leq C\cdot |P|$. 
        Observe that $D$ is an $(X,Y)$-dominating set in $G$ as $N_{G}[D\cup X]\cup Y = N_{G\setminus \{y_1y_2\}}[D\cup X]\cup Y=V(G)$.  
    Moreover, since $P\cap Y=\emptyset$, $P$ is an $(X,Y)$-packing of $G$ as well.
    Hence we get a contradiction to the assumption that $\frac{\gamma_{X,Y}(G)}{\rho_{X,Y}(G)}>C$ and conclude that $G[Y]$ is independent.
\end{proof}

\begin{lemma}\label{lem:smalldegY}
     Let $\mathcal{G}$ be a hereditary class of graphs and $C\in \mathbb{N}$. For any $G\in \mathcal{G}$ and $X,Y \subseteq V(G)$ such that $\frac{\gamma_{X,Y}(G)}{\rho_{X,Y}(G)}>C$ and $|V(G)|$ is minimal, we have that every vertex $u$ with $\deg(u) \leq C$ is contained in $Y$. 
\end{lemma}

\begin{proof}
    From the minimality of $G$ we know that $G$ is connected and by~\Cref{lem:Xempty}, $X=\emptyset$.
    Assume to the contrary that there is a vertex $a\in V(G)$ with $\deg(a) \leq C$ and $a \notin Y$.
    Let $G'=G\setminus\{a\}$, $X'=X\cup N(a)=N(a)$ and $Y' = Y$. We can assume that $N(a)\neq \emptyset$ as $G$ is connected. 
    By the minimality of $G$, there is an $(X',Y')$-dominating-packing pair $(D',P')$ in $G'$ such that $\frac{|D'|}{|P'|} \leq C$.
  
  Observe that $D=D'\cup N(a)$ is an $(X,Y)$-dominating set in $G$. By the assumption on $D'$, $N_{G\setminus \{a\}}[D'\cup (X \cup N(a))]\cup Y=V(G)\setminus \{a\}$. Therefore $N_G[(D'\cup N(a))\cup X]\cup Y= V(G)$, as required. 
Now we argue that  $P = P' \cup \{a\}$ is an $(X,Y)$-packing of $G$. Indeed, Property \eqref{packP} holds because of Property \eqref{packX} of $P'$. Property \eqref{packX} holds because $X=\emptyset$ and property \eqref{packY} holds because $a\notin Y$.

    We have $|D| \leq |D'| + C$ and $|P| = |P'| + 1$, hence $\frac{\gamma_{X,Y}(G)}{\rho_{X,Y}(G)}\le \frac{|D'|+C}{|P'|+1} \leq C$, a contradiction.
\end{proof}

\begin{lemma}\label{lem:y2deg}
     Let $\mathcal{G}$ be a hereditary class of graphs and $C\in \mathbb{N}$. Let $G\in \mathcal{G}$ for which there are $X,Y \subseteq V(G)$ be such that $\frac{\gamma_{X,Y}(G)}{\rho_{X,Y}(G)}>C$ and $|V(G)|$ is minimal, then for every vertex $y \in Y$ we have $\deg(y) \geq 2$.
\end{lemma}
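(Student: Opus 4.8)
The plan is to argue by contradiction, following the established proof technique: I assume some $y\in Y$ has $\deg(y)\le 1$ and build an $(X,Y)$-dominating--packing pair for $G$ of ratio at most $C$. Exactly as in the proof of \Cref{lem:smalldegY}, I would first invoke \Cref{lem:Xempty} to assume $X=\emptyset$, and note that minimality of $|V(G)|$ forces $G$ to be connected (a disconnected counterexample splits as $\gamma_{X,Y}(G)=\sum_i\gamma_{X_i,Y_i}(G_i)$ and $\rho_{X,Y}(G)=\sum_i\rho_{X_i,Y_i}(G_i)$ over its components, so one component would be a strictly smaller counterexample). If $\deg(y)=0$, connectivity gives $V(G)=\{y\}$, and since $y\in Y$ the empty set is an $(X,Y)$-dominating set, whence $\gamma_{X,Y}(G)=0$ and $G$ is not a counterexample. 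So I may assume $\deg(y)=1$, with unique neighbour $z$.

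Next I would set $G'=G\setminus\{y\}$, which lies in $\mathcal G$ by heredity, together with $X'=\emptyset$ and $Y'=Y\setminus\{y\}$. Since $|V(G')|<|V(G)|$, minimality yields an $(X',Y')$-dominating--packing pair $(D',P')$ with $|D'|\le C\cdot|P'|$, and I then lift both objects to $G$. For domination I take $D=D'$: every vertex of $V(G')$ is covered by $N_{G'}[D']\cup Y'\subseteq N_G[D]\cup Y$, and the single new vertex $y$ lies in $Y$; hence $N_G[D]\cup Y=V(G)$ and $D$ is an $(X,Y)$-dominating set of $G$ with $|D|=|D'|$.

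The crux of the argument---and the step I expect to be the only real obstacle---is lifting the packing, i.e.\ checking that $P=P'$ is an $(X,Y)$-packing of $G$. Properties \eqref{packX} and \eqref{packY} of \Cref{packDef} are immediate, since $X=\emptyset$ and $P'\cap Y=\emptyset$ (because $P'\cap Y'=\emptyset$ and $y\notin P'$). For property \eqref{packP} I must control distances, which is precisely the difficulty flagged in the introduction: deleting a vertex can in principle shorten or destroy shortest paths and thereby turn a valid packing invalid. Here, however, $y$ has degree at most $1$, so it can never be an internal vertex of any path; consequently no shortest path between two vertices of $V(G')$ passes through $y$, giving $\dist_G(u,v)=\dist_{G'}(u,v)$ for all $u,v\in V(G')$. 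In particular the pairwise distances among the vertices of $P'$ are unchanged, so property \eqref{packP} survives and $P=P'$ is an $(X,Y)$-packing of $G$ with $|P|=|P'|$.

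Combining the two lifts gives $\frac{\gamma_{X,Y}(G)}{\rho_{X,Y}(G)}\le \frac{|D|}{|P|}=\frac{|D'|}{|P'|}\le C$, contradicting the hypothesis $\frac{\gamma_{X,Y}(G)}{\rho_{X,Y}(G)}>C$. This forces $\deg(y)\ge 2$ for every $y\in Y$, as claimed. I emphasise that the only place the degree bound is used is in the distance-preservation step for the packing; everything else is a routine transfer of the dominating set and of conditions \eqref{packX}--\eqref{packY}.
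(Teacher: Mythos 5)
Your proof is correct and takes essentially the same route as the paper's: delete $y$, set $Y'=Y\setminus\{y\}$, reuse the pair $(D',P')$ unchanged, and note that since $\deg(y)\le 1$ the vertex $y$ cannot be internal to any path, so all distances---and hence the packing properties---are preserved, while $y$ itself is dominated for free because $y\in Y$. Your extra reductions (invoking \Cref{lem:Xempty} to assume $X=\emptyset$, and treating $\deg(y)=0$ separately via connectivity) are harmless but not needed: the paper simply keeps $X'=X$ and handles $\deg(y)\le 1$ uniformly.
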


\begin{proof}
    Assume to the contrary that there is $y\in Y$ such that $\deg(y)\le 1$. Let $G'=G\setminus \{y\}$, and $X'=X$, $Y'=Y\setminus\{y\}$. 
    
    Let $D'$ be the smallest $(X',Y')$-dominating set and $P$ the largest $(X',Y')$-packing of $G'$. By the minimality of $G$, $\frac{|D'|}{|P'|} \leq C$. Now notice that $D'$ is an $(X,Y)$-dominating set of $G$, since $y\in Y$, and $P'$ is an $(X,Y)$-packing of $G$, as adding $y$ cannot change the distances between any two vertices, because $\deg(y)\leq 1$. Again, we came to a contradiction with $\frac{\gamma_{X,Y}(G)}{\rho_{X,Y}(G)}>C$.
\end{proof}

\section{Graphs with bounded tree-width}\label{subsec:treewidth}

There are many equivalent ways to define the tree-width of a graph, we use the following one. The {\it tree-width} of a graph is $\min \{\omega(H)-1\mid G \mbox{ is a subgraph of } H \mbox{ and } H \mbox{ is chordal}\}$, where $\omega(H)$ denotes the order of the largest clique in $H$ \cite[Proposition 12.4.4]{Dbook}.
We prove the following stronger statement, which then implies \Cref{thm:tw-dompack} by substituting $X = Y = \emptyset$.

\begin{theorem}\label{th:treewidth}
    For every $k$, for every graph $G$ with tree-width at most $k$ and any sets $X,Y \subseteq V(G)$, we have $\gamma_{X,Y}(G) \leq k \cdot \rho_{X,Y}(G)$.
\end{theorem}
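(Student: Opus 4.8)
The plan is to follow the inductive proof technique outlined in the paper, proving the stronger statement $\gamma_{X,Y}(G) \leq k \cdot \rho_{X,Y}(G)$ for all tree-width-$k$ graphs and all $X,Y$. I would argue by contradiction: suppose some tree-width-$k$ graph $G$ with sets $X,Y$ violates the bound (i.e.\ $\gamma_{X,Y}(G)/\rho_{X,Y}(G) > k$), and take a counterexample minimizing $|V(G)|$. Since bounded tree-width is a hereditary (indeed monotone) class, I can invoke the structural lemmas: by \Cref{lem:Xempty} we have $X = \emptyset$, by \Cref{lem:smalldegY} every vertex of degree at most $k$ lies in $Y$, and by \Cref{lem:y2deg} every vertex of $Y$ has degree at least $2$. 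The goal is then to find a good vertex to delete so that the induction hypothesis on the smaller graph yields a dominating-packing pair that can be lifted back to $G$ without losing more than a factor $k$.

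The key structural input I would exploit is the chordal-completion definition of tree-width given just before the statement. A chordal graph $H$ containing $G$ with $\omega(H) \le k+1$ has a \emph{simplicial vertex}, and more usefully, chordal graphs have \emph{simplicial elimination orderings}; taking the last vertex in such an ordering gives a vertex $v$ whose neighborhood in $H$ is a clique of size at most $k$. My first step would be to locate, via such an ordering restricted to the relevant part of $G$, a low-degree vertex (degree at most $k$ in $G$, since $G \subseteq H$) to serve as the target. By \Cref{lem:smalldegY} this vertex lies in $Y$, and by \Cref{lem:y2deg} it has degree at least $2$; the plan is to pick a vertex $y \in Y$ that is simplicial-like, so that its at-most-$k$ neighbors form a set $S$ which is ``almost'' a clique in $G$ and hence has small domination cost.

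The core of the argument is the deletion-and-lift step. I would set $G' = G \setminus \{y\}$ (still of tree-width at most $k$), choose $X' = S = N(y)$ and $Y' = Y \setminus \{y\}$ (possibly augmenting $Y'$ with second-neighborhood vertices to handle distance bookkeeping), and apply induction to obtain an $(X',Y')$-dominating-packing pair $(D',P')$ with $|D'|/|P'| \le k$. To build a dominating set for $G$, I add all of $N(y)$ (at most $k$ vertices) to $D'$, which covers $y$ and everything $y$ needed to dominate; to build a packing for $G$, I add $y$ itself to $P'$, using that $y \in Y$ was excluded from $P'$ and that placing $N(y)$ into $X'$ forced $P'$ to avoid the ball $N[N(y)]$, so $y$ is at distance at least $3$ from every vertex of $P'$. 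This yields $|D| \le |D'| + k$ and $|P| = |P'| + 1$, so $|D|/|P| \le (|D'|+k)/(|P'|+1) \le k$, contradicting the choice of $G$.

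The main obstacle I anticipate is the distance bookkeeping in the packing lift, specifically ensuring that the new packing vertex $y$ is genuinely at distance at least $3$ from the rest of $P'$ and, conversely, that the $(X',Y')$-packing conditions are strong enough that no vertex of $P'$ crept too close to $y$ through a path that existed in $G$ but was not ``seen'' in $G'$. This is exactly the subtlety the $(X,Y)$-framework was designed to absorb: by putting $N(y)$ into $X'$, condition \eqref{packX} of \Cref{packDef} forces $P' \cap N[N(y)] = \emptyset$, which should give $\dist_G(y, u) \ge 3$ for all $u \in P'$. I would need to verify carefully that distances in $G'$ and $G$ agree on the relevant pairs (deleting the degree-$\ge 2$ vertex $y$ could in principle lengthen some paths, but the packing conditions only ever shrink the candidate set, so this direction is safe), and that the clique-like structure of $N(y)$ really does bound the domination overhead by $k$ rather than something larger. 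If $N(y)$ is not literally a clique in $G$ but only in the completion $H$, I must confirm that adding all of $N(y)$ to $D'$ still costs at most $k$ vertices, which it does since $|N(y)| \le k$.
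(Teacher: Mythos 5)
Your plan breaks at the one step that carries the whole contradiction: the lifted packing. You correctly note that your target vertex --- a simplicial vertex of the chordal completion, hence of degree at most $k$ in $G$ --- is forced into $Y$ by \Cref{lem:smalldegY}. But then $P = P' \cup \{y\}$ with $y \in Y$ violates Property~\eqref{packY} of \Cref{packDef}: an $(X,Y)$-packing of $G$ must avoid the \emph{original} set $Y$, and setting $Y' = Y \setminus \{y\}$ in the subproblem does nothing to repair this in $G$. So your $P$ does not witness $\rho_{X,Y}(G) \geq |P'|+1$, and no contradiction is reached. Worse, the step cannot be fixed by a cleverer choice of low-degree vertex: in a minimal counterexample, \Cref{lem:smalldegY} has already been ``used up'' --- \emph{every} vertex of degree at most $k$ lies in $Y$ --- and your deletion-and-lift move (delete a low-degree vertex outside $Y$, set $X' = N(y)$, add $N(y)$ to $D'$ and $y$ to $P'$) is literally the proof of \Cref{lem:smalldegY}, whose conclusion is precisely that no packing-eligible low-degree vertex remains. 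The distance bookkeeping you flag as the main worry is in fact fine ($X' = N(y)$ forces $P' \cap N_{G'}[N(y)] = \emptyset$, hence $\dist_G(y,p) \geq 3$ for $p \in P'$); the obstruction is membership in $Y$, not distances.

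The paper escapes exactly this trap by peeling \emph{two} elimination layers: $A_1$ is the set of simplicial vertices of the completion $G^+$ (these all land in $Y$, as you observed), and $A_2$ is the set of simplicial vertices of $G^+ \setminus A_1$. The vertex added to the packing is $v \in A_2$, not a first-layer vertex. Its $G^+$-neighborhood splits as $B \cup C$ with $B \subseteq A_1 \subseteq Y$ and $|C| \leq k$ (since $C$ is a clique in $G^+ \setminus A_1$), and --- using that $G[Y]$ is stable, via \Cref{lem:Ystable}, which you never invoke and which requires minimizing $|V(G)|+|E(G)|$ rather than just $|V(G)|$ --- any $G$-path from $v$ through a vertex of $B$ immediately re-enters $C \cup \{v\}$. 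Induction is then applied with $X' = C_1 = C \cap N_G(v)$ and $Y' = Y \cup C_2$, where $C_2$ is the set of $C$-vertices at $G$-distance exactly $2$ from $v$, and the dominating set is patched with $C_1$ together with one common neighbor per vertex of $C_2$, at total cost at most $k$. In short, the missing idea is that the new packing vertex must come from the \emph{second} elimination layer, with the accompanying $C_1/C_2$ bookkeeping; a first-layer simplicial vertex can never serve, because minimality has already pushed it into $Y$.
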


\begin{proof}
We proceed by contradiction. Consider a counterexample $(G,X,Y)$ which minimizes $|V(G)|+|E(G)|$. By Lemmas~\ref{lem:Xempty} and~\ref{lem:Ystable}, we may assume that $X=\emptyset$ and that $G[Y]$ induces a stable set.

By our assumptions, $G$ has tree-width at most $k$, therefore $G$ is a subgraph of some chordal graph $G^+$ on the same vertex set with $\omega(G^+) \leq k+1$. 
Since $G^+$ is chordal, it contains at least one simplicial vertex, that is, a vertex whose neighborhood in $G^+$ induces a clique. Let 
\[A_1 = \{v\in V : v \text{ is simplicial in } G^+ \}
\quad \text { and } \quad
A_2 = \{v\in V : v \text{ is simplicial in } G^+\setminus A_1 \}.\]
We have that $A_1\neq \emptyset$ and that each $v\in A_2$ has at least one neighbor in $A_1$, as otherwise it would be contained in $A_1$. 
Moreover, $\omega(G^+) \leq k+1$ implies that for any $v \in A_1$, $\deg_{G^+}(v)\leq k$. As $G^+$ is obtained from $G$ by adding edges, we also have that $\deg_{G}(v)\leq k$ for each $v \in A_1$. Hence by Lemma~\ref{lem:smalldegY}, we get that $A_1\subseteq Y$.

Since $G^+ \setminus A_1$ is also chordal, $A_2 = \emptyset$ if and only if $G^+ \setminus A_1$ is the empty graph, in particular $V(G^+) = V(G) = A_1 \subseteq Y$
However, in this case, $\emptyset$ is an $(X,Y)$-packing and $(X,Y)$-dominating set of size $0$, and so $\gamma_{X,Y}(G) \leq k \cdot \rho_{X,Y}(G)$ is satisfied, contradicting that $(G,X,Y)$ is a counterexample.
Therefore we can assume that $A_2\neq \emptyset$ and consider 
\[
\text{a vertex $v\in A_2$, \quad the sets $B = N_{G^+}(v) \cap A_1 \neq \emptyset$, \quad  $C = N_{G^+}(v) \setminus A_1$, \quad and $u \in B$. }
\]
\noindent
Observe that as $u$ is simplicial in $G^+$, we have $N_{G^+}(u) \subseteq N_{G^+}[v] = B \cup C \cup \{v\}$ and so $N_{G}(u) \subseteq N_{G^+}(u) \subseteq B \cup C \cup \{v\}$. On the other hand, $B\subseteq A_1\subseteq Y$ which is stable in $G$ and so $N_{G}(u)  \subseteq C \cup \{v\}$.
Now we consider the sets
\[
C_1 =  C \cap N_G(v) \quad \text{ and } \quad C_2 = \{ c \in C ~:~ \dist_G(v,c)=2\} \subseteq C\setminus C_1,
\]
and define $G' = G \setminus \{v\}$, $X' =C_1$, and $Y'= Y\cup C_2$. By the minimality of $G$, $G'$ has an $(X', Y')$-dominating set $D'$  and an $(X', Y')$-packing $P'$ such that $|D'| \leq k\cdot |P'|$. 

First we argue that $P = P' \cup \{v\}$ is an $(X,Y)$-packing in $G$ by verifying Properties \eqref{packP}-\eqref{packY} of Definition~\ref{packDef}. Property \eqref{packX} trivially holds as $X=\emptyset$ and Property \eqref{packY} holds because $P' \cap Y' = \emptyset$ and $v\notin Y$. 
To establish Property \eqref{packP}, since $P'$ is a packing in $G \setminus \{v\}$, it is sufficient to show that $\dist_G(p,v)\ge 3$ for all $p \in P'$.
Assume to the contrary that there is $p \in P'$ such that $\dist_G(p,v)\le 2$, or equivalently, $N_G(v)\cap N_G[p] \neq \emptyset$ and let $u\in N_G(v)\cap N_G[p]$. Recall that $N_G(v) \subseteq B \cup C_1$ and that $P'$ is a $(C_1, Y\cup C_2)$-packing, which together imply  $N_G[p] \cap C_1 = \emptyset$. Therefore it must be the case that $u\in B$ and so $p \in N_G[u] \subseteq C \cup \{v\} \cup \{u\}$. 
As $p$ is a vertex of  a $(C_1, Y\cup C_2)$-packing in $G\setminus \{v\}$, we have $p \neq v$, $p \notin N_{G'}[C_1]$, and $p \notin Y \cup C_2 \supseteq B$. However as $\dist_G(p,v)\le 2$, it must be that $p \in B \cup C_1 \cup C_2 \cup \{v \} $, a contradiction.

To define an $(X,Y)$-dominating set $D$ of $G$, we distinguish two cases.
If $C_1\cup C_2\neq \emptyset$, we set $D=D'\cup C_1\cup C'_2$ where $C'_2$ is the set of common neighbors of $v$ and some vertex in $C_2$. 
Note that since $\deg_G(v) \leq k$ and $C_1 \cup C'_2 \subseteq N_G(v)$, we have $|C_1\cup C'_2|\le k$.
Since $D'$ is an $(X',Y')$-dominating set in $G'$, we have that 
$N_{G'}[D'\cup C_1]\cup (Y\cup C_2)=V(G')=V(G)\setminus \{v\}.$
Therefore 
$N_G[D'\cup C_1\cup C'_2] \cup Y=V(G)$, that is,
 $D$ is an $(X,Y)$-dominating set of $G$ of size $|D| \leq |D'| + k \leq k|P'| + k = k|P|$, a contradiction.
 
If $C_1\cup C_2=\emptyset$, then we set $D=D'\cup \{v\}$ and again get an $(X,Y)$-dominating set of $G$ (recall that $X = \emptyset$) of size $|D| \leq |D'| + 1 \leq k|P'| + 1 < k|P|$, a contradiction.
\end{proof}

\section{Planar graphs}\label{subsec:planar}

As before, we shall prove the following statement on $(X,Y)$-dominating sets and $(X,Y)$-packings, which implies \Cref{thm:planar-dompack} by taking $X=Y=\emptyset$.

\begin{theorem}\label{th:planar}
    For every planar graph $G$ and any sets $X,Y \subseteq V(G)$, we have $\gamma_{X,Y}(G) \leq 10 \rho_{X,Y}(G)$.
\end{theorem}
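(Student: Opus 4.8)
The plan is to mirror the inductive scheme used for \Cref{th:treewidth}. I would take a counterexample $(G,X,Y)$ minimizing $|V(G)|+|E(G)|$ and immediately invoke the structural lemmas of \Cref{sec:stronger}: by \Cref{lem:Xempty} we may assume $X=\emptyset$, by \Cref{lem:Ystable} that $Y$ is an independent set, and by \Cref{lem:smalldegY} applied with $C=10$ that every vertex of degree at most $10$ lies in $Y$; \Cref{lem:y2deg} gives that every vertex of $Y$ has degree at least $2$, and minimality gives that $G$ is connected. Writing $Z=V(G)\setminus Y$, this means every vertex of $Z$ has degree at least $11$, and since $Y$ is independent every edge of $G$ is incident to $Z$. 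Note also that any vertex we add to the packing must lie in $Z$, since \eqref{packY} forbids packing vertices in $Y$.

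Next I would isolate the reduction move. Fix a vertex $v\in Z$ and a set $M\subseteq N_G(v)$, write $N_G^{2}(v)$ for the set of vertices at distance exactly $2$ from $v$, and set $G'=G\setminus\{v\}$, $X'=M$, and $Y'=Y\cup\bigl((N_G(v)\cup N_G^{2}(v))\cap Z\bigr)$. Let $(D',P')$ be an $(X',Y')$-dominating--packing pair of $G'$ with $|D'|\le 10|P'|$, guaranteed by minimality. Then $P=P'\cup\{v\}$ is an $(X,Y)$-packing of $G$: properties \eqref{packP}--\eqref{packY} of \Cref{packDef} hold because $P'$ avoids $Y'$, which contains every vertex within distance $2$ of $v$ except $v$ itself, so every $p\in P'$ satisfies $\dist_G(v,p)\ge 3$. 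On the domination side, $D=D'\cup M\cup\{v\}$ works: $v$ dominates $N_G[v]$, hence every distance-$1$ vertex placed in $Y'$, and if $M$ dominates every vertex of $N_G^{2}(v)\cap Z$ then those distance-$2$ vertices placed in $Y'$ are genuinely covered in $G$. This yields $|D|\le|D'|+|M|+1$ and $|P|=|P'|+1$, so the ratio stays at most $10$ as soon as $|M|\le 9$, contradicting the choice of $(G,X,Y)$.

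It therefore remains to show that the minimal counterexample always contains a vertex $v\in Z$ whose second neighbourhood in $Z$ can be dominated by at most $9$ of its neighbours. This is the heart of the argument and the step I expect to be the main obstacle. Here I would run a discharging argument on a plane embedding of $G$, exploiting $|E(G)|\le 3|V(G)|-6$ together with the facts that $Y$ is independent and that $Z$-vertices have degree at least $11$: assigning an initial charge $\deg_G(w)-6$ to each vertex (total charge $-12$) and redistributing charge from the high-degree $Z$-vertices onto the low-degree $Y$-vertices, I would argue that some $Z$-vertex retains enough positive charge to force the required local sparsity, so that only few of its neighbours are needed to reach all of its distance-$2$ neighbours in $Z$.

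The genuine difficulty is the tension between the two sides of the induction: a packing vertex is forced to have degree at least $11$, yet the budget permits adding only $10$ vertices to the dominating set per packing vertex, so one cannot simply dominate the whole neighbourhood of $v$ (as is done in the tree-width proof, where the relevant neighbourhood is a clique of size at most $k$). The planar argument must instead use sparsity to guarantee that, somewhere in the graph, the distance-$2$ shell of a well-chosen vertex collapses onto few gateways; making the discharging rules yield exactly the constant $9$ (equivalently, the overall constant $10$) is where the careful case analysis will be concentrated, and possibly where one must allow reductions that insert several mutually distant vertices into the packing at once rather than a single $v$.
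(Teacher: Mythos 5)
Your reduction move is sound: with $X=\emptyset$, deleting $v\in Z$, putting $X'=M$ and $Y'=Y\cup\bigl((N_G(v)\cup N_G^2(v))\cap Z\bigr)$ does yield, from a pair $(D',P')$ of ratio at most $10$, a valid pair $(D'\cup M\cup\{v\},\,P'\cup\{v\})$ for $(G,X,Y)$ whenever $M$ dominates $N_G^2(v)\cap Z$, and the arithmetic works for $|M|\le 9$. But the proof then rests entirely on the unproven claim that a minimal counterexample contains a vertex $v\in Z$ whose second $Z$-neighbourhood is dominated by at most $9$ of its neighbours, and the discharging paragraph is a plan, not an argument: assigning charge $\deg_G(w)-6$ and pushing charge from $Z$ to $Y$ gives global average-degree information, but nothing in it connects ``positive charge at $v$'' to the covering property you need. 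Indeed the claim is locally very fragile: if $v$ has eleven degree-$2$ neighbours $y_1,\dots,y_{11}\in Y$ (degree $2$ is allowed by \Cref{lem:y2deg}), each $y_i$ leading to a distinct $w_i\in N_G^2(v)\cap Z$ with no other edge from $w_i$ into $N_G(v)$, then any admissible $M$ must contain all eleven $y_i$'s. Ruling out that such configurations occur at \emph{every} vertex of $Z$ is exactly the hard combinatorial content, and it is missing; you acknowledge as much, so the proposal is an incomplete proof with the central step open.

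It is worth noting that the paper's proof avoids this difficulty altogether: after invoking the same \Cref{lem:Xempty,lem:Ystable,lem:y2deg,lem:smalldegY}, it performs \emph{no} further reduction. Instead, it fixes a planar embedding and builds a planar supergraph $G^+$ by adding, around each $y\in Y$, the edges between consecutive neighbours of $y$ (a parallel edge when $\deg(y)=2$). Then $G^+[Y]$ is still independent and $G^+$ satisfies the key property ($\star$): every vertex has at most half of its $G^+$-neighbours in $Y$. Taking $A_1$ to be the vertices of $G^+$-degree at most $5$ and $A_2$ those of degree at most $5$ in $G^+\setminus A_1$, planarity gives $A_1\neq\emptyset$; since adding edges only raises degrees, $A_1\subseteq\{v:\deg_G(v)\le 5\}\subseteq Y$ by \Cref{lem:smalldegY}; each vertex of $A_2$ has a neighbour in $A_1\subseteq Y$, so $A_2\cap Y=\emptyset$; yet ($\star$) forces $\deg_{G^+}(v)\le 10$ for $v\in A_2$, whence $A_2\subseteq Y$ --- a contradiction (and $A_2=\emptyset$ forces $V(G)=A_1\subseteq Y$, also impossible). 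So all inductive constructions are confined to the Section~\ref{sec:stronger} lemmas, and the planar-specific part is a purely structural counting argument. To salvage your route you would either have to prove your local covering lemma (which the above configuration suggests is at best delicate) or adopt some analogue of this edge-addition trick that converts the independence of $Y$ into the degree bound ($\star$).
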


\begin{proof}
We proceed by contradiction. Consider a counterexample $(G,X,Y)$ which minimizes $|V(G)|+|E(G)|$. By Lemmas~\ref{lem:Xempty},~\ref{lem:Ystable}, and~\ref{lem:y2deg}, we have that $X=\emptyset$, $G[Y]$ induces a stable set, and for every vertex $y\in Y$, $\deg(y) \geq 2$.

Let $\pi$ be a planar embedding of $G$. For each vertex in $y\in Y$, consider the clockwise ordering of the neighbors of $y$ in the embedding $\pi$. For every two consecutive neighbors $v_i, v_{i+1}$ of $y$, if $v_iv_{i+1} \notin E(G)$, then we add the edge $v_iv_{i+1}$ inside the face which contains $y, v_i$, and $v_{i+1}$ on its boundary. In the special case where $y$ only has only two neighbors $v_1,v_2$, we add a parallel edge $v_1v_2$ and count the neighbors of these vertices with multiplicity.  See Figure \ref{fig:planar+} for an illustration of the edge additions around some vertex $y\in Y$. 

\begin{figure}[h]\centering\label{fig:planar_supergraph}
    \begin{subfigure}{.5\textwidth}
      \centering
      \includegraphics[width=.7\linewidth]{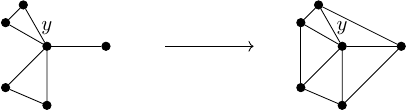}
    \end{subfigure}%
    \begin{subfigure}{.5\textwidth}
      \centering
      \includegraphics[width=.6\linewidth]{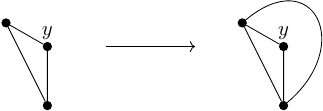}
    \end{subfigure}
    \caption{Edge additions in $G^+$.\label{fig:planar+}}
\end{figure}

\noindent 
Let $G^+$ be the resulting graph. Note that $G^+$ is planar, $G^+[Y]$ is an independent set (as $G[Y]$ was independent and we only added edges between neighbors of vertices in $Y$), and $G^+$ has the following property
\begin{enumerate}
    \item[($\star$)] every vertex $v \in V(G)$ has at most half of its neighbors in $Y$.
\end{enumerate}

\noindent
Property ($\star$) can be deduced as follows. If $v \in Y$, the statement is trivial by (ii).
Let $v\notin Y$, and assume that $v$ has a neighbor $y\in Y$. 

Let $f_1, f_2$ be the two faces of $G^+$ in its planar embedding which contain the edge $vy$ on its boundary. By the construction of $G^+$, each $f_i$ ($i = 1,2$) contains an edge connecting $v$ to some $u_i \in N_G(y)$ (we might have $u_1 = u_2 = u$, but then $uv$ must be a multi-edge
and we count $u$ as a neighbor twice). Note that since $G^+[Y]$ is independent, we have $u_i \notin Y$. Thus, for each $y \in Y \cap N_G(v)$, we can find at least two neighbors of $v$ that are not in $Y$. On the other hand, each edge $uv$ is contained in at most two faces of the planar embedding of $G^+$, therefore there are at most two $y_1, y_2 \in Y \cap N_G(v)$ that forms a face together with $uv$.\\

\noindent
We consider the sets  
$A_1=\{v\in V(G): d_{G^+}(v)\le 5\}$ and $A_2=\{v\in V(G): d_{G^+\setminus A_1}(v)\le 5\}$.
Note that since $G^+$ is planar, it must contain a vertex of degree at most $5$ and so $A_1 \neq \emptyset$. 
 As  we obtained $G^+$ from $G$ by only adding edges, we have $A_1\subseteq \{v\in V(G):d_G(v)\le 5\}$.  On the other hand, by \Cref{lem:smalldegY}, we have $\{v\in V(G):d_G(v)\le 10\}\subseteq Y$, and so $A_1 \subseteq Y$.
If $A_2=\emptyset$, then $G^+ \setminus A_1$ is the empty graph and so $V(G) = A_1 \subseteq Y$. However, this would imply that  $\gamma_{X,Y}(G)=0$ and $\rho_{X,Y}(G)=0$, a contradiction.

Therefore we can assume that $A_2 \neq \emptyset$. First observe that for any vertex $v \in A_2$, we have $N_{G^+}(v)\cap A_1\neq \emptyset$ as otherwise $v\in A_1$. Since $A_1 \subseteq Y$ and $G^+[Y]$ is independent, $A_2\cap Y=\emptyset$ follows. 
Next we argue that for every $v\in A_2$, $d_{G^+}(v)\le 10$. Indeed, as $v \in A_2$, $d_{G^+\setminus A_1}(v)\le 5$ and by Property ($\star$), at most half of its neighbors are from $A_1\subseteq Y$. 
Therefore, we conclude  $A_2 \subseteq \{v\in V(G): d_{G^+}(v)\le 10\} \subseteq \{v\in V(G): d_{G}(v)\le 10\} \subseteq Y$, a contradiction to $A_2 \cap Y = \emptyset$. 
\end{proof}

\section{Bounded twin-width graphs}\label{sec:twinwidth}

Before we discuss how to prove that graphs with bounded twin-width admit a bounded ratio $\gamma/\rho$, let us consider the case of \textit{distance-hereditary graphs} as a gentle warm-up. 
We say that a graph $G$ is distance-hereditary if and only if for every connected induced subgraph
$H$ of $G$, the distance between every pair of vertices in $H$ is the same as in $G$. 
We will prove the following theorem.

\begin{restatable}{theorem}{distance-h}
\label{th:distance-h-general}
    For every distance-hereditary graph $G$, we have $\gamma(G) \leq 2 \cdot \rho(G)$.
\end{restatable}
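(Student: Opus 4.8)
The plan is to prove the stronger $(X,Y)$-statement $\gamma_{X,Y}(G)\le 2\,\rho_{X,Y}(G)$ for every distance-hereditary graph $G$ and all $X,Y\subseteq V(G)$, following the minimal-counterexample scheme of \Cref{sec:stronger}; the theorem is the case $X=Y=\emptyset$. So I would take a counterexample $(G,X,Y)$ minimizing $|V(G)|$. Distance-hereditary graphs form a hereditary class, so \Cref{lem:Xempty} gives $X=\emptyset$, \Cref{lem:smalldegY} gives that every vertex of degree at most $2$ lies in $Y$, and \Cref{lem:y2deg} gives that every vertex of $Y$ has degree at least $2$; together these rule out vertices of degree at most $1$ and force $G$ to be connected. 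Note that, in contrast with the tree-width and planar proofs, I cannot use \Cref{lem:Ystable} here, since distance-hereditary graphs are not closed under edge deletion.

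Next I would feed in the classical structural description of distance-hereditary graphs: every such graph on at least two vertices is obtained from a single vertex by repeatedly adding pendant vertices and true or false twins, and therefore contains either a pendant vertex or a twin pair. Since the minimal counterexample has no vertex of degree one, it contains a twin pair $u,v$. The elementary fact I would record first is that deleting one twin preserves all pairwise distances among the surviving vertices (a shortest path through $v$ reroutes through $u$, whose neighbourhood is almost identical); consequently every $(X,Y)$-packing of $G':=G-v$ is an $(X,Y)$-packing of $G$, so $\rho_{X,Y}(G)\ge \rho_{X,Y}(G')$, and for false twins the swap $v\mapsto u$ shows the two packing numbers coincide.

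The true-twin case ($N[u]=N[v]$) then closes immediately. Choosing notation so that the retained twin $u$ is not in $Y$ (delete the other twin otherwise, or note both are dominated for free when $u,v\in Y$), apply minimality to $(G',X,Y)$ to obtain a pair $(D',P')$ with $|D'|\le 2|P'|$. Since $D'$ dominates $u$ in $G'$, it meets $N_{G'}[u]=N_G[v]\setminus\{v\}\subseteq N_G[v]$, and that vertex dominates $v$ in $G$; hence $D'$ is an $(X,Y)$-dominating set and $P'$ an $(X,Y)$-packing of $G$, contradicting the choice of $(G,X,Y)$.

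The false-twin case ($N(u)=N(v)$, $uv\notin E$), with common neighbourhood $S$, is the genuinely delicate one and is where I expect the real work to lie. Here $D'$ dominates $v$ exactly when $D'\cap S\neq\emptyset$; if it does, the argument closes as above, so the only obstruction is that $v$ may require a private dominator --- costing one extra vertex while, by the previous paragraph, the packing number does not grow. I would first use the rigidity of a minimal counterexample: combining $\rho_{X,Y}(G)\ge\rho_{X,Y}(G')$ with $\gamma_{X,Y}(G)\le\gamma_{X,Y}(G')+1$ and the induction hypothesis on $G'$ forces the recursion to be tight, $\rho_{X,Y}(G)=\rho_{X,Y}(G')$, and every minimum $(X,Y)$-dominating set of $G'$ to avoid $S$, hence to contain $u$. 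This lets me pass to $G'':=G-\{u,v\}$ with $S$ marked as dominated for free (that is, $Y''=Y\cup S$, reflecting that the forced vertex $u$ already covers $S$), so that $\gamma_{X,Y}(G')=\gamma_{\emptyset,Y\cup S}(G'')+1$, and I would try to recover a packing of $G$ from a packing of $G''$ by adding $u$ back. The main obstacle --- and the step I expect to require the full structure of distance-hereditary graphs rather than the twin operation alone --- is the second neighbourhood of the pair, namely the vertices at distance two from $v$: no single added vertex dominates them, yet if a maximum packing of $G''$ uses such a vertex then $u$ cannot be added back and the needed packing gain is lost. Handling this layer (for instance through the split decomposition of $G$ into cliques and stars, or a more global distance argument) is the crux on which the constant $2$ ultimately depends.
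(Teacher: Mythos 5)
Your setup and the easy cases are sound and match the paper's skeleton: the paper also proceeds by a minimal counterexample in the $(X,Y)$-framework, reduces to $X=\emptyset$ as in \Cref{lem:Xempty}, uses the Bandelt--Mulder characterization (pendant vertex or twin pair), observes that twin deletion preserves distances so packings transfer, and your remark that \Cref{lem:Ystable} is unavailable because the class is hereditary but not monotone is correct. Your treatment of the pendant and true-twin cases goes through. But the false-twin case, which you yourself flag as ``the crux,'' is a genuine gap, and it is exactly where the whole difficulty of the theorem sits. Your rigidity deductions are correct as far as they go (if some minimum $(X,Y)$-dominating set of $G'=G-v$ meets $S=N(v)$ you are done, so you may assume every such set contains $u$ and avoids $S$, forcing $\gamma_{X,Y}(G)=\gamma_{X,Y}(G')+1$ with the induction tight), but the obstruction you then name is real and fatal to the plan: a maximum $(\emptyset, Y\cup S)$-packing of $G''=G-\{u,v\}$ may contain a vertex at distance two from $u$ in $G$ (a neighbor of $S$), so $u$ cannot be added back and no packing gain compensates the extra dominator. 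Neither split decomposition nor a global distance argument is carried out, so the proposal does not prove the statement.

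The paper closes this case not with more structure theory but by strengthening the quantity being induced on: it proves $\gamma^t_{X,Y}(G) \leq 2\rho_{X,Y}(G)$ (\Cref{lem:total}), where $\gamma^t$ requires \emph{total} domination --- every vertex outside $X\cup Y$ that has a neighbor must have a neighbor in $D$, even if it is itself in $D$. With this invariant the twin case (true or false alike) becomes immediate: delete $u$, apply minimality to $G-u$, and totality forces $D$ to contain a vertex of $N_{G-u}(v) \subseteq N_G(u)$ (nonempty since the minimal counterexample has no degree-one vertex), and that vertex dominates the deleted twin $u$ in $G$, while the packing transfers unchanged because twin deletion preserves distances. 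The price is paid in the pendant case, where one takes a total $(\{v\},Y)$-dominating set of $G-u$ and must add \emph{both} $u$ and $v$ to restore totality, against a packing gain of one --- which is precisely why the constant is $2$. So the missing idea in your proposal is this total-domination strengthening of the induction hypothesis; within the plain $(X,Y)$-framework the false-twin recursion is tight, as your own analysis shows, and cannot be closed by the reduction to $G''$ you sketch.
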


The proof will contain similar but simpler ingredients, as distance-hereditary graphs have small twin-width \cite{o05,BKTW21}. As before, we will work with $(X,Y)$ dominating sets and packings, but now we will use an even stronger notion $\gamma^t_{X,Y}(G)$, which requires total domination. Formally,  
$D$ is a total $(X,Y)$-dominating set if $D$ is an $(X,Y)$-dominating set and every vertex not in $X\cup Y$ has a neighbor in $D$ (even if it is itself in $D$) unless it has no neighbor; and $\gamma^t_{X,Y}(G)$ denotes the size of the smallest such set $D$.

    \begin{theorem}\label{lem:total}
        For every distance-hereditary graph $G$ and any sets $X,Y \subseteq V(G)$, we have $\gamma^t_{X,Y}(G) \leq 2 \rho_{X,Y}(G)$.
    \end{theorem}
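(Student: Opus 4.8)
The plan is to follow the same minimal-counterexample induction used throughout the paper, now with the stronger total-domination parameter $\gamma^t_{X,Y}$ and the target constant $C=2$. So I would assume for contradiction that $(G,X,Y)$ is a counterexample minimizing $|V(G)|+|E(G)|$ with $\gamma^t_{X,Y}(G) > 2\rho_{X,Y}(G)$. The established lemmas in Section~\ref{sec:stronger} were stated for $\gamma_{X,Y}$, but their proofs only manipulate the dominating set by adding/deleting vertices and do not interfere with the extra ``every non-$(X\cup Y)$ vertex has a neighbor in $D$'' requirement; I would first check (or re-prove as needed) analogues of Lemmas~\ref{lem:Xempty} and~\ref{lem:smalldegY} for the total version, concluding $X=\emptyset$ and that every vertex of degree at most $2$ lies in $Y$. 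The crucial structural input is that distance-hereditary graphs are hereditary (so the induction stays inside the class) and that they admit a strong peeling structure: every distance-hereditary graph on at least two vertices has a vertex that is either a pendant (degree one), a twin (true or false) of another vertex, or more usefully, they are built by a pruning sequence adding pendant vertices and twins.

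The heart of the argument should be to locate a ``cheap'' vertex to peel off that, after applying the induction hypothesis to a smaller graph $G'$ in the class, lets me extend a dominating-packing pair while paying at most $2$ new dominators per new packing vertex. Concretely, I would take a vertex $v$ at the end of the pruning sequence. If $v$ is a pendant with neighbor $w$: I would delete $v$, set $X'$ or $Y'$ appropriately to record that $w$ is now ``for free'' and that its neighborhood is pre-dominated, obtain $(D',P')$ with $|D'|\le 2|P'|$ by minimality, and then add $v$ to the packing and add the pair $\{v,w\}$ (or $w$ plus one witness) to the total-dominating set, spending exactly $2$ dominators (needed because total domination forces $v$ itself to have a neighbor in $D$) for one new packing vertex, preserving the ratio. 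If $v$ is a twin of $u$, then $u$ and $v$ have the same neighborhood, so I can delete $v$, solve the smaller instance, and reinsert $v$ for free into the domination (it is dominated exactly when $u$ is, and it contributes a neighbor-in-$D$ whenever $u$ does), without needing a new packing vertex; the total-domination condition on $v$ is inherited from $u$'s. The bookkeeping that converts distances in $G\setminus\{v\}$ back to distances in $G$ is exactly what the $(X,Y)$ framework is designed to absorb, via the $C_1,C_2$-style relabelling already used in Theorem~\ref{th:treewidth}.

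The main obstacle I anticipate is the pendant case and precisely why the constant is $2$ rather than $1$: the total-domination requirement means that a packing vertex $v$ which is a leaf cannot dominate itself, so I am forced to put its unique neighbor $w$ into $D$, and I generally also need a second vertex to totally dominate $w$ (if $w$ has no other guaranteed in-$D$ neighbor), giving the factor $2$. I would need to verify that in every peeling case the ratio $|D|/|P| \le 2$ is maintained tightly, handling carefully the situation where the peeled vertex is already in $Y$ (then it is dominated for free and should be removed via a degree/low-degree lemma rather than charged), and the situation where $G$ is disconnected (restricting to a component, using that the class and both parameters are additive over components). I would also double-check the boundary convention in the definition of total domination—``unless it has no neighbor''—to ensure isolated vertices placed in $Y$ by the low-degree lemma do not break the induction. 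Assuming distance-hereditary graphs' pruning sequence (pendant/twin additions) is available as a known fact, the rest is the same three-step template, and I expect the factor-$2$ accounting in the pendant case to be the only genuinely delicate point.
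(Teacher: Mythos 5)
Your proposal follows essentially the same route as the paper's proof: a minimal counterexample with $X=\emptyset$, the pendant-or-twin elimination structure of distance-hereditary graphs (the paper cites \cite{BM86}), paying exactly two dominators in the pendant case (the leaf $v$ and its neighbor $w$, which totally dominate each other, against one new packing vertex, with $X'=\{w\}$ in the smaller instance), and reinserting a twin for free in the twin case after noting twins cannot lie in $Y$ by minimality. Your only superfluous elements are the proposed analogue of Lemma~\ref{lem:smalldegY} (not needed here, and its proof does not transfer verbatim to total domination since the vertices of $N(a)$ added to $D$ need not themselves acquire neighbors in $D$) and the $C_1,C_2$-style relabelling (unnecessary, since deleting one member of a twin pair preserves all distances); the paper gets by with the $X=\emptyset$ reduction, a Lemma~\ref{lem:y2deg}-style deletion of $Y$-vertices with too few neighbors, and the observation that $N(u)\cap N(v)\neq\emptyset$ guarantees the total-domination witness for the reinserted twin.
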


\noindent
Note that $\gamma_{X,Y}(G)  \leq \gamma^t_{X,Y}(G)$ by definition, therefore by applying \Cref{lem:total} with $X = Y = \emptyset$, we recover \Cref{th:distance-h-general}.

    \begin{proof}[\textbf{Proof of \Cref{lem:total}}]
        We proceed by contradiction. Consider a counterexample $(G,X,Y)$ which minimizes $|V(G)|$. Note that similarly to Lemma~\ref{lem:Xempty} we may assume $X=\emptyset$. By \cite{BM86}, any distance-hereditary graph $G$ contains a vertex $u$ of degree $1$ or admits two vertices $u,v$ such that either $N(u)=N(v)$ or $N[u]=N[v]$. 
        
        If $G$ contains a vertex $u$ of degree $1$, we note that $u \not\in X \cup Y$ as $X = \emptyset$ and similarly to Lemma~\ref{lem:y2deg} any vertex in $Y$ that does not have at least two neighbors not in $X \cup Y$ may be deleted without impacting the solution. Let $v$ be the unique neighbor of $u$. We let $D$ be a smallest total $(\{v\},Y)$-dominating set of $G \setminus \{u\}$ and $P$ be a largest $(\{v\},Y)$-packing of $G \setminus \{u\}$. By the minimality of $G$, we have $|D|\leq 2|P|$. However, the set $D\cup\{u,v\}$ is a total $(\emptyset,Y)$-dominating set of $G$ and the set $P \cup \{u\}$ is an $(\emptyset,Y)$-packing in $G$, a contradiction. 
        
        Therefore, $G$ contains two vertices $u,v$ with either $N(u)=N(v)$ or $N[u]=N[v]$. Note that neither $u$ nor $v$ belongs to $X \cup Y$, as $X=\emptyset$ and if $u \in Y$ (resp. $v \in Y$) then $G \setminus \{u\}$ (resp. $G \setminus \{v\}$) would also be a counterexample, contradicting the minimality of $G$. Let $A=N(u)\cap N(v)$. Since we already proved that $G$ contains no vertex of degree $1$, we know that $A$ is not empty. We consider $G\setminus \{u\}$ and let $D$ be a smallest total $(\emptyset,Y)$-dominating set and $P$ be a largest $(\emptyset,Y)$-packing of $G \setminus \{u\}$. By the minimality of $G$, we have $|D|\leq 2|P|$. Note that $N(u) = N(v)$ implies that $P$ is an $(\emptyset,Y)$-packing of $G$. Moreover, $D$ is a total $(\emptyset,Y)$-dominating set of $G$, indeed $D$ contains at least one element of $N_G(u)$ since it contained at least one vertex of $N_{G \setminus \{u\}}(v) \subseteq N_G(u) $, where $N_{G \setminus \{u\}}(v) \neq \emptyset$ as $A \neq \emptyset$. In conclusion, $P$ is an $(\emptyset,Y)$-packing of $G$ and $D$ is a total $(\emptyset,Y)$-dominating set of $G$ such that $|D|\leq 2|P|$, a contradiction.           
    \end{proof}

\noindent
Now we turn to the problem of domination and packing in graphs of bounded twin-width.

\paragraph{Definition of twin-width.}Let $G$ be a graph and $R$ be a subset of its edges, which we will refer to as its \textit{red} edges (by contrast, edges in $E(G)\setminus R$ will be referred to as \textit{black} edges). \textit{Contracting} a pair $u,v$ of vertices in $G$ results in a graph $G'=G|_{u,v \rightarrow w}$ obtained from $G$ by deleting $u$ and $v$ and adding a vertex $w$ that has a black edge to any vertex that had a black edge to both $u$ and $v$, and a red edge to any vertex in $N_G(u) \cup N_G(v)$ that does not fall in the previous category.
    A graph $G$ with only black edges has \textit{twin-width} at most $k$ if and only if there is a series of contractions from $G$ to $K_1$ such that, at any step, the red edges form a graph with maximum degree at most $k$. By extension, we may say that a graph with some red edges has twin-width at most $k$ if there is a series of contractions from this graph to $K_1$ such that, at any step, the red edges form a graph with maximum degree at most $k$. \\
    
\noindent
Once again, we do not prove \Cref{thm:tww-dompack} directly, but rather work with a different induction hypothesis. This time, our hypothesis will use a stronger notion of domination number, but only allow $X = \emptyset$. Let $G$ be a graph with edges colored red and black. Let $e=uv\in E(G)$, we say that $u$ (resp. $v$) is a black neighbor of $v$ (resp. $u$) if $uv$ is black and it is a red neighbor of $uv$ is red. 
    We will use a stronger notion of domination number, denoted $\gamma^B_{\emptyset,Y}(G)$, which requires domination via black edges whenever possible. Formally, $D$ is a B-$(\emptyset,Y)$-dominating set if it is a $(\emptyset,Y)$-dominating set and every vertex not in $Y$ has a black neighbor in $D$ (even if it is itself in $D$) unless it has no black neighbor (in which case we just require it to be contained in $N[D]$). The notion of $(X,Y)$-packings remains unchanged in edge-colored graphs, with each color having an equal role.

        \begin{lemma}\label{lem:twinw}
        For every graph $G$ with red edges $R$ such that $(G,R)$ has twin-width at most $k$, for any set $Y \subseteq V(G)$, we have $\gamma^B_{\emptyset,Y}(G) \leq 4 k^2 \rho_{\emptyset,Y}(G)$.
    \end{lemma}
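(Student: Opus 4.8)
\textbf{Proof plan for Lemma~\ref{lem:twinw}.}
The plan is to follow the same minimal-counterexample induction scheme as in the previous proofs, but now the inductive step will be driven by a single contraction of the twin-width decomposition rather than by deleting a simplicial or low-degree vertex. First I would take a counterexample $(G,R,Y)$ with $(G,R)$ of twin-width at most $k$ and $\gamma^B_{\emptyset,Y}(G) > 4k^2 \rho_{\emptyset,Y}(G)$ minimizing $|V(G)|$. Fix a contraction sequence witnessing twin-width $\le k$, and look at its first step, which contracts a pair $u,v$ into a new vertex $w$. The contracted graph $G' = G|_{u,v\to w}$ (with its inherited red edges $R'$) still has twin-width at most $k$, so by minimality $G'$ admits a B-$(\emptyset,Y')$-dominating set $D'$ and an $(\emptyset,Y')$-packing $P'$ with $|D'| \le 4k^2 |P'|$, for a suitably chosen $Y' \subseteq V(G')$. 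The key structural fact I would exploit is that right before the final contraction the red graph has maximum degree $\le k$, so $w$ has at most $k$ red neighbors in $G'$; pulling this back to $G$, the set $N_R(u) \cup N_R(v)$ of vertices joined to $\{u,v\}$ by a red edge has size at most something like $2k$, and every other neighbor of $u$ or $v$ is reached by a black edge. This bounded red-degree is exactly what replaces the bounded-degree hypothesis used via Lemma~\ref{lem:smalldegY} in the tree-width and planar proofs.

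The heart of the argument is choosing $Y'$ and translating $(D',P')$ back to $G$. As in the tree-width proof I would split the neighborhood of the contracted pair by distance: the vertices adjacent to $\{u,v\}$ and the vertices at distance exactly two should be absorbed into the new ``free'' layer $Y'$ so that a packing vertex placed at $w$ does not violate the distance-$3$ condition with any vertex of $P'$. Concretely I expect to put into $Y'$ (on top of $Y$) the former neighbors and second-neighbors of the contracted pair that are reached along red edges, so that the red-degree bound caps the number of new free vertices by $O(k)$, and hence the number of dominating vertices I must add ``for free'' by $O(k)$ as well. Then $P = P' \cup \{\text{one of } u,v\}$ (or $P'$ with $w$ re-expanded) is an $(\emptyset,Y)$-packing of $G$, increasing the packing by one, while $D$ is obtained from $D'$ by re-expanding $w$ and adding the $O(k)$ forced vertices, so that $|D| \le |D'| + O(k)$. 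The constant $4k^2$ is engineered precisely so that $|D'| + O(k) \le 4k^2(|P'|+1)$ goes through, i.e. the additive $O(k)$ per new packing vertex is comfortably below $4k^2$.

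The main obstacle I anticipate is the bookkeeping around the B-domination (black-domination) requirement interacting with contraction, rather than the counting itself. The subtlety is that contracting $u,v$ into $w$ \emph{downgrades} edges: a vertex that had a black edge to exactly one of $u,v$ but not a black edge to both ends up joined to $w$ by a \emph{red} edge. So a vertex that was black-dominated in $G$ may only be red-dominated after contraction, and conversely I must be careful that the B-dominating set $D'$ of $G'$ I lift back still black-dominates the corresponding vertices of $G$. Handling this correctly is what forces the detailed split of $N(u)\cup N(v)$ into ``black-to-both'', ``black-to-one'', and ``red'' parts, and it is why the hypothesis retains the stronger B-domination notion through the induction; the squared dependence $4k^2$ (as opposed to linear in $k$) arises from needing to pay once for red neighbors of $w$ and again for the second neighborhood that those red neighbors reach. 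I would verify that when $w$ has no black neighbor the fallback clause ``just require it to be in $N[D]$'' is respected, and that all three packing properties \eqref{packP}--\eqref{packY} of Definition~\ref{packDef} survive the re-expansion of $w$.
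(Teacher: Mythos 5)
There is a genuine gap, and it sits exactly where your plan puts all its weight: the packing increment at the contraction step. Your lift $P = P' \cup \{u\}$, with only the \emph{red}-reached neighbors and second-neighbors of the contracted pair absorbed into $Y'$, does not yield an $(\emptyset,Y)$-packing of $G$. The twin-width bound controls red degrees only; the contracted pair $u,v$ may have arbitrarily many \emph{black} neighbors. Take for instance $u,v$ twins with a huge common black neighborhood $N$: contracting them creates no red edges at all, so your $Y'$ gains nothing, and $P'$ may legally contain a vertex $p$ with a black edge to some $z \in N$ --- nothing in the packing conditions for $G'$ forbids $p$ at distance $2$ from $w$ when $w \notin P'$ --- yet $\dist_G(p,u) = 2$, violating property \eqref{packP} of Definition~\ref{packDef}. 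Repairing this by pushing the entire (black) second neighborhood of $u$ into $Y'$ is unaffordable: those vertices are not in $Y$, so back in $G$ they must still be B-dominated, and there can be unboundedly many of them against your single new packing vertex. Dropping the packing increment does not save the scheme either: the contraction step runs once per induction step, so even a constant additive cost (e.g.\ the $+2$ for re-expanding $w$ into $\{u,v\}$ in the dominating set) wrecks the ratio, since the packing number can stay bounded while the number of contractions grows with $|V(G)|$.

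The paper avoids this by decoupling the two roles you try to combine. Its contraction step is genuinely cost-free: $D$ and $P$ transfer verbatim (up to renaming $w$ to $u$), with no new packing vertex and no additions to $D$. What rescues B-domination against the edge-downgrading you correctly identified is a separate claim, proved first by a deletion-based induction: \emph{every vertex with at most $k$ black neighbors already belongs to $Y$}. The proof of that claim is where your intuitions actually live --- for such a low-black-degree vertex $u$ one deletes $N[u]$, pushes both parts $S_B, S_R$ of the second neighborhood into $Y$, uses the red-degree bound to get $|S_R| \leq 2k^2$, adds $u$ to the packing, and pays at most $2k^2 + 3k + 1 \leq 4k^2$ new dominating vertices. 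Then, after contraction, any vertex with no black neighbor in $G'$ has all its black $G$-neighbors among $\{u,v\}$, hence at most $2 \leq k$ of them, hence lies in $Y$ and needs no domination at all. So the $k^2$ and the packing increment are incurred at low-black-degree vertices, not at contractions; without this claim (or an equivalent device making the contraction step free), your outline cannot be completed as written.
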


\noindent
  Note that $\gamma^B_{\emptyset,Y}(G) \geq \gamma_{\emptyset,Y}(G) $, therefore we can deduce \Cref{thm:tww-dompack} by applying Lemma~\ref{lem:twinw} to $G$ with $R=X=Y=\emptyset$.
    
    \begin{proof}[\textbf{Proof of \Cref{lem:twinw}}]
        We proceed by contradiction. Consider a counterexample $(G,R,Y)$ which minimizes $|V(G)|$.

        \begin{claim*}
             Any vertex $u \in V(G)$ with at most $k$ black neighbors belongs to $Y$.
        \end{claim*}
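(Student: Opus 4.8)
The plan is to follow the now-familiar minimal-counterexample recipe, mirroring the proof of \Cref{lem:smalldegY} but accounting for the fact that we only allow $X=\emptyset$ and that the contraction operation (rather than simple vertex deletion) will later be the tool for the inductive step. Assume toward a contradiction that some vertex $u$ with at most $k$ black neighbors has $u\notin Y$. We may assume $G$ is connected by minimality. The idea is to delete $u$, promote its neighborhood appropriately, and recover the B-$(\emptyset,Y)$-dominating set and $(\emptyset,Y)$-packing in the smaller graph, then lift them back to $G$.

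Concretely, I would set $G'=G\setminus\{u\}$ and $Y'=Y\cup (N(u)\setminus B(u))$, where $B(u)$ denotes the set of black neighbors of $u$. Since $|B(u)|\le k$, adding $B(u)$ to any dominating set costs at most $k\le 4k^2$. By minimality of $G$, there is a B-$(\emptyset,Y')$-dominating-packing pair $(D',P')$ in $G'$ with $|D'|\le 4k^2|P'|$. The plan for the packing side is to take $P=P'\cup\{u\}$: Property~\eqref{packX} is vacuous as $X=\emptyset$, Property~\eqref{packY} holds since $u\notin Y$ and $P'\cap Y'=\emptyset$, and Property~\eqref{packP}, that $\dist_G(u,p)\ge 3$ for all $p\in P'$, should follow because every vertex within distance $2$ of $u$ lies either in $N(u)$ or is a neighbor of a vertex in $N(u)$; the vertices of $N(u)$ are blocked from $P'$ by membership in $Y'$ (for red neighbors) and I must check that black neighbors of $u$ are also excluded. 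On the dominating side, I would take $D=D'\cup B(u)$ and verify that this B-$(\emptyset,Y)$-dominates $G$: the vertices of $G'$ keep their black domination from $D'$ (the relevant point being that putting $N(u)\setminus B(u)$ into $Y'$ removed exactly those vertices $u$ could not black-dominate), $u$ itself is black-dominated by $B(u)$ unless $B(u)=\emptyset$ in which case $u$ has no black neighbor and needs only to lie in $N[D]$.

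The main obstacle I anticipate is the packing verification around the black neighbors of $u$. Unlike in \Cref{lem:smalldegY}, where all neighbors of $u$ were thrown into $X$ (forcing distance-$2$ separation), here the black neighbors $B(u)$ are \emph{not} placed into $Y'$, so a packing vertex $p\in P'$ could in principle sit inside $B(u)$ or adjacent to it, putting it within distance $2$ of $u$ and violating Property~\eqref{packP}. Resolving this is the crux: I expect one must argue that a vertex $b\in B(u)$ cannot be close enough to a packing vertex to cause a conflict, or else adjust the construction — for instance, by additionally forbidding $B(u)$ from the packing or by a more careful choice of $Y'$ that still keeps the dominating-set cost within the $4k^2$ budget. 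The factor $4k^2$ in the statement (as opposed to the bare $k$ of \Cref{lem:smalldegY}) strongly suggests that the budget is deliberately generous precisely to absorb such an adjustment, so the correct move is likely to place a slightly larger set into $Y'$ or $D$ and trade off against the quadratic slack.

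Once the Claim is established, the degree threshold $k$ (for black neighbors) will plug into the subsequent contraction argument: a contraction sequence of red-degree at most $k$ guarantees that at the relevant step there is a vertex $u$ with few black (indeed, few total) neighbors outside a bounded red structure, and the Claim forces all such low-black-degree vertices into $Y$, which will drive the rest of the proof of \Cref{lem:twinw}. I would therefore keep the bookkeeping explicit enough that the black/red distinction survives into the contraction step.
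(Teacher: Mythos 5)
There is a genuine gap, and it is in fact larger than the one you flag. Your construction keeps $G'=G\setminus\{u\}$ and places only the red neighbors of $u$ into $Y'$, and you then assert that vertices of $N(u)$ being in $Y'$ ``blocks'' packing vertices near $u$. But membership in $Y'$ only enforces Property~\eqref{packY}, i.e.\ $P'\cap Y'=\emptyset$ --- it does \emph{not} force packing vertices to keep distance from $Y'$. So even if you placed \emph{all} of $N(u)$ into $Y'$, a vertex $p$ at distance exactly $2$ from $u$ could still lie in $P'$, and $P'\cup\{u\}$ would violate Property~\eqref{packP}. In \Cref{lem:smalldegY} this was avoided by putting $N(a)$ into $X'$, which does force distance $\geq 2$ via Property~\eqref{packX}; here the induction hypothesis only allows $X=\emptyset$, so the $Y$-mechanism cannot simulate that, and your packing verification cannot be repaired by merely ``forbidding $B(u)$ from the packing.'' Your domination bookkeeping also breaks: a black neighbor $b$ of $u$ whose only black edge goes to $u$ must, in $G$, have a black neighbor \emph{in} $D$ (being in $D$ itself does not suffice, by the definition of B-domination), but the only candidate is $u\notin D$; likewise the red neighbors of $u$ sit in $Y'$ but not in $Y$, and your $D=D'\cup B(u)$ gives them no dominator at all.

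The paper's proof resolves both issues with a structurally different cut: it deletes the entire closed neighborhood $B_u\cup R_u\cup\{u\}$ and places the whole second neighborhood $S=S_B\cup S_R$ into $Y'$, where $S_B$ are second neighbors with a black edge to $N(u)$ and $S_R$ those with only red edges to $N(u)$. Then any $p\in P'$ lies outside $N[u]$ and outside $S$, so $\dist_G(u,p)\geq 3$ and $P'\cup\{u\}$ is a genuine $(\emptyset,Y)$-packing. The domination is then rebuilt by hand: add all of $B_u\cup R_u$ to $D$ (this black-dominates $S_B$), add for each vertex of $S_R$ a black neighbor or itself --- at most $|S_R|\leq k\,|B_u\cup R_u|\leq 2k^2$ vertices, since red degrees are at most $k$ --- and add $u$ (which black-dominates $B_u$) plus a black neighbor for each element of $R_u$. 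The total surcharge is at most $2k^2+3k+1\leq 4k^2$ for $k\geq 2$, which is precisely what the quadratic budget you correctly sensed is there to pay for; your intuition about the slack was right, but the missing idea is that the inductive cut must remove $N[u]$ and mark the full second neighborhood, not just delete $u$ and mark its red neighbors.
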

        \begin{proof}
        Consider such a vertex $u$ and let $B_u$ denote the set of its black neighbors and $R_u$ denote the set of its red neighbors. Note that $|N_G(u)| = |B_u \cup R_u| \leq 2k$.
        We consider the second-neighborhood $S$ of $u$, and partition it into two sets:
        $S_B$ containing the vertices of $S$ with some black edge to $B_u \cup R_u$, and $S_R$ containing those vertices of $S$ that have only red edges to $B_u \cup R_u$. Note that since every vertex is incident to at most $k$ red edges, we have $|S_R|\leq k  |B_u \cup R_u| \leq 2k^2$.

        We apply induction on $(G \setminus (B_u \cup R_u \cup \{u\}), Y \cup S_R \cup S_B)$ and obtain a B-$(\emptyset,Y \cup S_R \cup S_B)$-dominating set $D'$ and an $(\emptyset, Y \cup S_R \cup S_B)$-packing $P'$, with $|D'| \leq 4 k^2  |P'|$. If $u \not\in Y$, we consider $P= P' \cup \{u\}$ and note that $P$ is a $(\emptyset,Y)$-packing of $G$. Then, we consider $D$ obtained from $D'$ as follows. We first add all of $B_u \cup R_u$ to $D$, then make sure to B-dominate vertices in $S_R$ (note that vertices in $S_B$ are B-dominated by definition) by adding for each element in $S_R$ either a black neighbor or itself if it has none. Finally, we make sure to B-dominate vertices in $B_u \cup R_u$ by adding $u$ and for each element in $R_u$ a black neighbor if it has one. This way, we obtained a black $(\emptyset,Y)$-dominating set $D$ of $G$ of size 
        \begin{align*}
            |D| 
            &\leq 
            |D'|+|B_u|+|R_u|+|S_R|+|\{u\}|+|R_u|
            \leq 
            |D'|+k+k + 2k^2+1 +k \\
            &\leq 
            4k^2|P'|+2k^2+3k+1
            \leq 
            4k^2(|P'| +1)
            =
            4k^2|P|            
        \end{align*}
        Therefore, we obtain a contradiction unless $u \in Y$.
        \end{proof}

        \noindent
        Since $(G,R)$ has twin-width at most $k$, there are two vertices $u,v \in V(G)$ such that contracting $u$ and $v$ in $G$ results in a graph with twin-width at most $k$. We consider $(G',R')$ to be the graph obtained by contracting $u$, $v$ to a vertex $w$. If $u$ and $v$ both belong to $Y$, we set $Y'=Y \setminus \{u,v\} \cup \{w\}$. Otherwise, we consider without loss of generality $u \not\in Y$ and set $Y'=Y \setminus \{v\}$. By minimality, $(G',R',Y')$ admits a B-dominating set $D$ and a packing $P$ such that $|D|\leq 4 k^2 |P|$. The packing $P$ is also a packing in $(G,R,Y)$, free to replace $w$ with $u$. We now argue that $D$ is a B-$(\emptyset,Y)$-dominating set of $G$, again up to replacing $w$ with $u$. Indeed, for every vertex $x$ with a black neighbor in $G'$, there is a vertex $y \in D$ such that the edge $xy$ is black in $G'$ and hence black in $G$. We are concerned with the case of a vertex $x$ with no black neighbor in $G'$. However, since $x$ has no black neighbor in $G'$, it has at most $k$ black neighbors in $G$ hence belongs to $Y$. Therefore, $D$ is a B-$(\emptyset,Y)$-dominating set of $(G,R)$. Since $|D|\leq 4 k^2 |P|$, we reach a contradiction and conclude the proof of \Cref{lem:twinw}.
    \end{proof}

\section{$k$-degenerate graphs}\label{sec:kdeg}

A graph $G$ is $k$-degenerate if every subgraph of $G$ has a vertex of degree at most $k$.
In this section we show that the $\gamma/\rho$ ratio is  bounded by a constant for $2$-degenerate graphs, however it is not the case for higher degeneracy. We start by giving the proof of the second result.

\negativeThreedegenerate* 

\begin{proof}
    For $k\in \mathbb{N}$, we consider the graph $T_k$ defined as follows. The vertex set of $T_k$ is the disjoint union of three sets $A,B,\{v\}$ such that $|A|=2k$ and $|B|=\binom{2k}{2}$. The edge set of $T_k$ has the following elements: $v$ is adjacent to every vertex in $B$ and for every subset $\{a_i,a_j\}$ of size $2$ in $A$, there is exactly one vertex $b_{ij} \in B$ whose neighborhood in $A$ is precisely $\{a_i,a_j\}$. See Figure \ref{fig:negative3Deg} for an illustration of $T_2$.
 \begin{figure}[!ht]
\centering
\resizebox{0.3\textwidth}{!}{%
\begin{circuitikz}
\tikzstyle{every node}=[font=\LARGE]
\draw [dashed, line width=0.8pt ] (3.75,7.25) ellipse (1cm and 2.75cm);
\draw [dashed, line width=0.8pt ] (7.25,7.25) ellipse (1cm and 3.75cm);
\draw [ fill={rgb,255:red,0; green,0; blue,0} , line width=0.2pt ] (3.75,8.75) circle (0.15cm);
\draw [ fill={rgb,255:red,0; green,0; blue,0} , line width=0.2pt ] (3.75,7.75) circle (0.15cm);
\node [font=\LARGE] at (4.25,7.25) {};
\node [font=\LARGE] at (4.25,7.25) {};
\draw [ fill={rgb,255:red,0; green,0; blue,0} , line width=0.2pt ] (3.75,6.75) circle (0.15cm);
\node [font=\LARGE] at (4.25,6.25) {};
\node [font=\LARGE] at (4.25,6.25) {};
\node [font=\LARGE] at (4.25,6.25) {};
\draw [ fill={rgb,255:red,0; green,0; blue,0} , line width=0.2pt ] (3.75,5.75) circle (0.15cm);
\draw [ fill={rgb,255:red,0; green,0; blue,0} , line width=0.2pt ] (7.25,9.75) circle (0.15cm);
\draw [ fill={rgb,255:red,0; green,0; blue,0} , line width=0.2pt ] (7.25,8.75) circle (0.15cm);
\draw [ fill={rgb,255:red,0; green,0; blue,0} , line width=0.2pt ] (7.25,7.75) circle (0.15cm);
\draw [ fill={rgb,255:red,0; green,0; blue,0} , line width=0.2pt ] (7.25,6.75) circle (0.15cm);
\draw [ fill={rgb,255:red,0; green,0; blue,0} , line width=0.2pt ] (7.25,5.75) circle (0.15cm);
\draw [ fill={rgb,255:red,0; green,0; blue,0} , line width=0.2pt ] (7.25,4.75) circle (0.15cm);
\draw [ fill={rgb,255:red,0; green,0; blue,0} , line width=0.2pt ] (10.5,7.25) circle (0.15cm);
\draw [line width=0.8pt, short] (7.25,9.75) -- (10.5,7.25);
\draw [line width=0.8pt, short] (7.25,8.75) -- (10.5,7.25);
\draw [line width=0.8pt, short] (7.25,7.75) -- (10.5,7.25);
\draw [line width=0.8pt, short] (7.25,6.75) -- (10.5,7.25);
\draw [line width=0.8pt, short] (7.25,5.75) -- (10.5,7.25);
\draw [line width=0.8pt, short] (7.25,4.75) -- (10.5,7.25);
\draw [line width=0.8pt, short] (3.75,8.75) -- (7.25,9.75);
\draw [line width=0.8pt, short] (3.75,7.75) -- (7.25,9.75);
\draw [line width=0.8pt, short] (3.75,8.75) -- (7.25,8.75);
\draw [line width=0.8pt, short] (3.75,6.75) -- (7.25,8.75);
\draw [line width=0.8pt, short] (3.75,8.75) -- (7.25,7.75);
\draw [line width=0.8pt, short] (3.75,5.75) -- (7.25,7.75);
\draw [line width=0.8pt, short] (3.75,7.75) -- (7.25,6.75);
\draw [line width=0.8pt, short] (3.75,6.75) -- (7.25,6.75);
\draw [line width=0.8pt, short] (3.75,7.75) -- (7.25,5.75);
\draw [line width=0.8pt, short] (3.75,5.75) -- (7.25,5.75);
\draw [line width=0.8pt, short] (3.75,6.75) -- (7.25,4.75);
\draw [line width=0.8pt, short] (3.75,5.75) -- (7.25,4.75);
\node [font=\LARGE] at (3.5,11.75) {};
\node [font=\LARGE] at (3.75,10.75) {$A$};
\node [font=\LARGE] at (7.25,11.5) {$B$};
\node [font=\LARGE] at (10.5,8) {$v$};
\end{circuitikz}
}%
\caption{The graph $T_2$.\label{fig:negative3Deg}}
\end{figure}
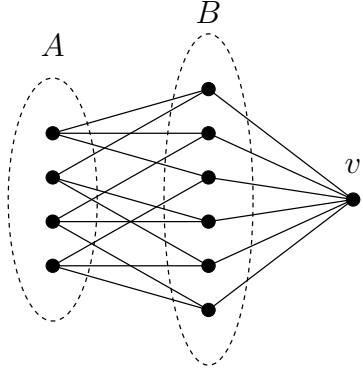

    Any vertex in this graph dominates at most $2$ vertices from $A$, which gives $\gamma(T_k) \geq k$.
    Considering the packing number, observe that any two vertices in $A$ have a (unique) common neighbor in $B$, so we can have at most one vertex from $A$ in the packing. Similarly, any  two vertices in $B$ are adjacent to $v$, so we can have at most one vertex from $B \cup \{v\}$ in the packing. This shows that $\rho(T_k)\le 2$. 
\end{proof}

\subsection{2-degenerate graphs}
For $2$-degenerate graphs, we cannot require to have a constant ratio $(X,Y)$-dominating-packing pair for all $X,Y \subseteq V(G)$. To see this, take the counterexample $T_k$ described for $3$-degenerate graphs (in the proof of Theorem~\ref{th:negative3deg}) and consider $T'_k=T_k\setminus \{v\}$, which is $2$-degenerate. It is easy to verify that with the choice of $Y = B$ and $X= \emptyset$, we get $\rho_{X,Y}(T'_k) = 1$ and  $\gamma_{X,Y}(T'_k) \geq  k = k \cdot \rho_{X,Y}(T'_k)$.

Therefore, we forbid the use of $Y$. 
This leads to a more technical induction hypothesis, including the following sets: for any set $X \subseteq V(G)$, we define $X^{2+}=\{ x \in X ~:~ \deg_G(x) \geq 2\}$ and $X^1=\{ x \in X ~:~ \deg_G(x) =1\}$.  We will prove the following statement.

\begin{theorem}\label{th:2deg}
    For every $2$-degenerate graph $G$ and for every set $X\subseteq V(G)$, we have 
    \begin{equation}\label{eq:2-deg-hypothesis}
        \gamma_{X,\emptyset}(G) \leq 7 \rho_{X,\emptyset}(G) + 2\cdot |X^{2+}| + |X^1|.
    \end{equation}
     In particular, for $X = \emptyset$, we obtain $\gamma(G) \leq 7 \rho(G)$.
\end{theorem}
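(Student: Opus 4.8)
The plan is to prove \eqref{eq:2-deg-hypothesis} by the induction scheme of \Cref{sec:stronger}, taking as the right-hand side the potential $\Phi(G,X)\eqdef 7\rho_{X,\emptyset}(G)+2|X^{2+}|+|X^1|$; the case $X=\emptyset$ then recovers $\gamma(G)\le 7\rho(G)$. Crucially I keep $Y=\emptyset$ throughout: the graph $T'_k$ discussed above already shows that allowing $Y$ breaks even the plain ratio statement, so the additive terms $2|X^{2+}|+|X^1|$ are precisely the price paid for dispensing with $Y$. I would assume a counterexample $(G,X)$ minimizing $|V(G)|+|E(G)|$, i.e.\ with $\gamma_{X,\emptyset}(G)>\Phi(G,X)$, and derive a contradiction. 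Since $G$ is $2$-degenerate it has a vertex $v$ with $\deg_G(v)\le 2$, and I split according to how $v$ meets $X$.

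First I would dispose of the two benign cases. If $v\notin N[X]$, so that $v$ may enter a packing, I set $G'=G\setminus\{v\}$ and $X'=X\cup N_G(v)$, apply induction to get $(D',P')$ with $|D'|\le\Phi(G',X')$, and return $P=P'\cup\{v\}$ and $D=D'\cup N_G(v)$. Property \eqref{packX} for $P'$ forces every $p\in P'$ to satisfy $\dist_{G'}(p,N_G(v))\ge 2$, which simultaneously keeps $v$ at distance $\ge 3$ from $P'$ and rules out any new short $p$--$p'$ path routed through $v$, so $P$ is an $(X,\emptyset)$-packing; meanwhile $D$ dominates $N_G[N_G(v)]\ni v$ together with everything $D'$ dominated. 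As each neighbour of $v$ contributes at most $2$ to the budget of $X'$ and $\rho_{X,\emptyset}(G)\ge|P'|+1$, one gets $\gamma_{X,\emptyset}(G)\le |D'|+2\le 7(\rho_{X,\emptyset}(G)-1)+(2|X^{2+}|+|X^1|+4)+2=\Phi(G,X)-1$. The bound is tightest when $\deg_G(v)=2$, and this degree-two subcase is the binding constraint among the easy cases (it already requires the constant to be at least $6$), with the remaining slack consumed in the delicate case below. If instead $v\in N[X]\setminus X$, then $v$ is dominated for free and can never enter a packing, so I simply delete $v$ keeping $X'=X$: an $X$-neighbour $x$ of $v$ guarantees $\dist(p,x)\ge 2$ for every packing vertex $p$, hence reinserting $v$ creates no short path and $\rho_{X,\emptyset}(G\setminus v)\le\rho_{X,\emptyset}(G)$, while $D'$ still dominates $G$ and the budget cannot grow; induction then closes immediately.

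The remaining and genuinely delicate case is $v\in X$, and this is where I expect the real difficulty to lie. Deleting $v$ is dangerous precisely because an $X$-vertex is only a weak distance-$2$ blocker (it merely forbids packing vertices inside $N[v]$): a neighbour $u$ of $v$ whose sole $X$-contact is $v$ may become eligible for the packing once $v$ is gone, and if moreover $u$ lies at distance $\ge 3$ from some maximum packing, then $\rho_{X,\emptyset}(G\setminus v)=\rho_{X,\emptyset}(G)+1$ and naive deletion over-counts the packing by one, which no choice of the constant can repair. The hard part will be to prevent this packing-number jump without reintroducing $Y$. The intended remedy is to keep each freed neighbour blocked by promoting it into $X'$ rather than leaving it free, and to recover the extra domination this forces from the budget released by deleting $v$: erasing a degree-$\ge 2$ vertex of $X$ frees $2$ budget units and a degree-$1$ vertex frees $1$, and the coefficients $2$ and $1$ in $2|X^{2+}|+|X^1|$ are calibrated against the cost of re-blocking and re-dominating the (at most two) neighbours of $v$, whose degrees—hence budget contributions—drop by one upon the deletion. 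Balancing these competing $\pm1$'s through a case analysis on $\deg_G(v)$, on the degrees of its neighbours, and on whether each neighbour is already blocked by a second $X$-vertex is where the bookkeeping—and the exact value $7$—is pinned down; the residual obstruction of a neighbour lying genuinely far from every maximum packing must be absorbed either by a structural argument excluding such private far-away neighbours in a minimal counterexample, or by a combined reduction that deletes $v$ and accounts for the freed neighbour in one step. Exhausting all subcases yields $\gamma_{X,\emptyset}(G)\le\Phi(G,X)$ throughout, contradicting the minimality of $(G,X)$ and proving the theorem.
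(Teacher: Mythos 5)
Your two easy reductions are sound and essentially coincide with the paper's preparatory claim (\Cref{2deglemma}): when the degree-$\le 2$ vertex $v$ avoids $N[X]$ you gain a packing vertex and the accounting closes with slack, and when $v \in N(X)\setminus X$ plain deletion works. But the case you flag as delicate, $v \in X$ --- which is precisely the state a minimal counterexample is driven into, namely that \emph{every} vertex of degree at most $2$ lies in $X$ --- is left unproved, and the remedy you sketch provably cannot close the arithmetic. If you delete $v\in X$ and promote its (at most two) freed neighbours into $X'$, the packing does not grow, $|P|=|P'|$; deleting $v$ releases at most $2$ budget units, the promoted neighbours cost up to $4$, and they (or substitutes) must also be added to the dominating set to cover what they dominated for free in $G'$. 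The net effect is $|D|\le 7|P| + 2|X^{2+}|+|X^1| + c$ with $c>0$ and no compensating packing gain, an excess that no constant in place of $7$ can absorb since the configuration recurs. Your fallback options (``a structural argument excluding such private far-away neighbours'' or ``a combined reduction that deletes $v$ and accounts for the freed neighbour in one step'') name the difficulty without resolving it: they are the actual content of the theorem.

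The paper's resolution is qualitatively different from any single-vertex deletion. First, using minimality in $|V|+|E|$ (you state this minimality but never exploit edge deletions), it shows that in a minimal counterexample $X$ is stable, every vertex outside $X$ has degree at least $3$ and at most one neighbour in $X$, and every vertex of $N(X)$ has at least two neighbours outside $N[X]$. It then peels \emph{three} degeneracy layers $A_1, A_2, A_3$ (vertices of degree $\le 2$ in $G$, in $G\setminus A_1$, in $G\setminus(A_1\cup A_2)$): one gets $A_1\subseteq X$, each $v_1\in A_2$ has exactly one neighbour $x_1\in A_1$ and two neighbours outside $N[X]$, and any $u\in A_3$ lies outside $N[X]$ --- hence is packing-eligible, which is exactly the mechanism your sketch lacks for earning the $+7$ in this case. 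The reduction deletes $\{u,v_1,x_1\}$ simultaneously, blocks $u$'s two other neighbours $w_1,w_2$ by placing them in $X'$, and blocks $v_1$'s second outside-neighbour $u'$ by attaching a \emph{new pendant vertex} $y\in X'$, which costs only $1$ since $\deg(y)=1$; the asymmetry between the weights of $X^1$ and $X^{2+}$ that you correctly guessed was ``calibrated'' is used precisely at this point. Since $u$'s entire second neighbourhood is then covered by $N_{G'}[X']$, the packing gains $u$, and the resulting $+7$ pays exactly for the three added dominators and the net budget increase of $4$. Without some such descent to a packing-eligible vertex, your case $v\in X$ --- the only case that matters in the end --- remains a genuine gap.
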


\begin{proof}

We proceed by contradiction, letting $G=(V,E)$ be a subgraph-minimal graph for which 
 there exists a set $X \subseteq V$ with $\gamma_{X,\emptyset}(G) > 7 \rho_{X,\emptyset}(G) + 2\cdot |X^{2+}| + |X^1|$. Note that by the minimality of $G$, $G$ must be connected. We continue by deducing some properties of $(G,X)$.
\begin{claim}\label{2deglemma} By the minimality of $G$,
\begin{enumerate}
    \item[i)] any $u \in V \setminus X$ has at least two neighbors in $V \setminus X$;
    \item[ii)] any $u \in N(X)$ has at least two neighbors in $V \setminus N[X]$;
    \item[iii)] any $u \in V \setminus X$ has at most one neighbor in $X$;
    \item[iv)] $X$ is a stable set;
    \item[v)] any $u \in V\setminus X$ has degree at least $3$.
\end{enumerate}
\end{claim}
\begin{proof} The proof of each part follows a similar argument. We assume that the property does not hold and then show the existence of $G'\subsetneq G$ and $X' \subseteq V(G')$ such that any dominating set $D'$ and packing $P'$ for $(G',X')$  satisfying 
\begin{equation}\label{eq:2-deg-Gprime}
     |D'| \leq 7|P'| + 2|(X')^{2+}| + |(X')^{1}|
\end{equation}
can be used to construct a dominating set $D$ and a packing $P$ for $(G,X)$  which satisfies
\begin{equation}\label{eq:2-deg-G}
     |D| \leq 7|P| + 2|(X)^{2+}| + |(X)^{1}|\, ,
\end{equation}
reaching contradiction. Note that the pair $(D',P')$ satisfying Equation~\eqref{eq:2-deg-Gprime}  always exists because of the minimality of $G$. 
    \begin{enumerate}
        \item[i)] Assume that there exists a vertex $u \in V \setminus X$ with at most $1$ neighbor in $V\setminus X$. We distinguish two cases.
            \begin{enumerate}
                \item[a)] \textit{If $u$ has a neighbor in $X$}, consider the pair $(G', X') = (G \setminus \{u\}, X)$ and observe that any dominating-packing pair $(D',P')$ for $(G',X')$ is also a valid pair for $(G,X)$, because  $u$ is in $N(X)$ and it cannot be a common neighbor of two vertices in $P' \subseteq V \setminus X$. It also satisfies Equation~\eqref{eq:2-deg-G} as $X = X'$.
                \item[b)] \textit{If $u$ does not have a neighbor in $X$}, then let $v \in V\setminus X$ be the neighbor of $u$ and define $(G',X') =(G\setminus\{u\}, X \cup \{v\})$. Let ($D',P')$ be an $(X',\emptyset)$-dominating-packing pair that satisfies Equation~\eqref{eq:2-deg-Gprime}. Observe that $D= D'\cup \{v\}$ and $P= P' \cup \{u\}$ are valid dominating and packing sets for $(G,X)$, 
                with
                $$|D| 
                = 
                |D'|+1 
                \leq 
                7|P'| + 2|(X ')^{2+}| + |(X ')^{1}| +1 
                \leq
                 7|P| + 2|X^{2+}| + |X^{1}|.$$
                 
            \end{enumerate}
        \item[ii)]  Assume that there exists a vertex $u \in N(X)$ with at most $1$ neighbor in $V\setminus N[X]$.\\ In this case, we consider $(G',X')= (G \setminus \{u\}, X)$. It is easy to see that any dominating-packing pair $(D',P')$ for $(G',X')$  is  a valid pair for $(G,X)$ as well, because $u \in N(X)$ and it cannot have more than $1$ neighbor belonging to $P'$. Furthermore, $(D,P) = (D',P')$  satisfies Equation~\eqref{eq:2-deg-G} as $X = X'$.
        \item[iii)]  Assume that there exists a vertex $u \in V\setminus X$ with at least $2$ neighbors in $X$.\\
        Let $x_1,x_2 \in X$ be two distinct neighbors of $u$ and consider $G' = (V,E\setminus \{ux_1\})$ and $X' = X$. Let $(D',P')$ be a dominating-packing pair for $(G',X')$ which satisfies \Cref{eq:2-deg-Gprime}. It is easy to see that $(D,P) = (D',P')$ is a valid pair for $(G,X)$ as well that satisfies Equation~\eqref{eq:2-deg-G} as $X = X'$.
        \item[iv)]  Assume that there are vertices $x_1,x_2 \in X$  such that $x_1x_2 \in E$.\\
        Let  $G' = (V,E\setminus \{x_1x_2\})$ and $X' = X$. Once again, any dominating-packing pair for $(G',X')$ is also a valid pair for $(G,X)$, which satisfies Equation~\eqref{eq:2-deg-G} as $X = X'$.
       \item[v)]  Assume that there exists a vertex $u\in V\setminus X$ with $\deg(u) \leq 2$.\\
       Part i) implies that $\deg(u) = 2$ and $N(u) \cap X = \emptyset$. Let $(G',X') = (G \setminus \{u\}, X\cup N(u))$ and let $(D',P')$ be a dominating-packing pair for $(G',X')$. \\
       We define $(D,P) = (D' \cup N(u), P'\cup\{u\})$, which is clearly a dominating-packing pair for $(G,X)$ with $|P|=|P'|+1$ and $|D|=|D'|+2$. We also have $|X'|=|X|+2$  which implies $|(X')^1| + 2|(X')^{2+}| \leq |X^1| + 2|X^{2+}| +4$. Putting everything together, we obtain
        \[
            |D|
            =
            |D'|+2
            \leq 
            7|P'| + 2|(X')^{2+}| + |(X')^{1}|+2
            \leq 
             7|P| + 2|X^{2+}| + |X^{1}|.
       \]       
    \end{enumerate}
\end{proof}

\noindent
We proceed by considering the following three sets.
\begin{align*}
    A_1 &= \{ u \in V ~:~ \deg_G(u) \leq 2\};\\
    A_2 &= \{ u \in V\setminus A_1 ~:~ \deg_{G\setminus A_1}(u) \leq 2\};\\
    A_3 &= \{ u \in V\setminus (A_1\cup A_2) ~:~ \deg_{G\setminus (A_1\cup A_2)}(u) \leq 2\}.
\end{align*}
Now we put \Cref{2deglemma} into action to show some properties of these sets. 
By \Cref{2deglemma}~v), we have that $A_1 \subseteq X$. Since each vertex in $A_2$ has at least one neighbor in $A_1 \subseteq X$, we have
 $A_1 \cup A_2 \subseteq N[X]$.
As $A_2 \subseteq N(X)$,  \Cref{2deglemma} iv) implies that $A_2\subseteq V \setminus X$. From the definition of $A_2$ and \Cref{2deglemma}~iii), we can deduce that $\deg_G(u) = 3$ for all $u \in A_2$. As $A_2 \subseteq N(X)$, \Cref{2deglemma}~ii) gives that any vertex of $A_2$ has exactly two neighbors in $V \setminus N[X] $ and one neighbor in $A_1 \subseteq X$.

 If  $A_3 = \emptyset$, then  by the $2$-degeneracy of $G$, we have  $G\setminus (A_1 \cup A_2) = \emptyset$. In this case, since $A_1 \cup A_2 \subseteq N[X]$, we get that $\gamma_{X,\emptyset}(G) = 0$ and so Equation~\eqref{eq:2-deg-hypothesis} holds for $(G,X)$, a contradiction. Therefore we can assume that there exists a vertex $u \in A_3$. Note that $u$ must have a neighbor in $A_2$ and as each vertex in $A_2$ has exactly two neighbors in $V \setminus N[X] $ and one neighbor in $A_1$, we have $u \in V \setminus N[X]$. 
Let $v_1$ be a neighbor of $u$ in $A_2$ and $x_1$ be the neighbor of $v_1$ in $A_1$.

\begin{claim}
    The vertex $u$ has exactly two neighbors in $ V\setminus N[X]$.
\end{claim}
\begin{proof}
As $A_1 \cup A_2 \subseteq N[X]$, we have $\deg_{G\setminus N[X]}(u) \leq \deg_{G\setminus (A_1\cup A_2)}(u) \leq 2$. By contradiction, if  $u$ has at most $1$ neighbor in $V\setminus  N[X]$, then consider a dominating set $D'$ and a packing  $P'$ for $(G\setminus \{x_1,v_1,u\}, X \setminus \{x_1\})$ satisfying
$
     |D'| \leq 7|P'| + 2|X^{2+}| + |X^{1}|-1.
$
Observe that $D = D' \cup \{u\}$ and $P=P'$ is a valid dominating-packing pair for $(G,X)$ satisfying \Cref{eq:2-deg-G}, a contradiction.
\end{proof}
\begin{figure*}[h]\centering
    \includegraphics[width=7cm]{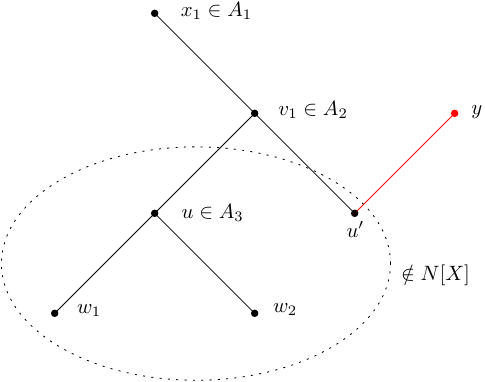}
    \end{figure*}
Let $w_1,w_2 \in V\setminus N[X]$ be two distinct neighbors of $u$. As $v_1 \in A_2$, we know that $v_1$ has exactly two neighbors in $V\setminus N[X]$, one of these is $u$ and we denote the other one by $u'$. 
Consider now the following pair:
\[
G' = \big(V \setminus \{u,v_1,x_1\}\cup \{y\}, E|_{V \setminus \{u,v_1,x_1\}} \cup \{u'y\}\big) \quad \text{ and } \quad X' = X \setminus \{x_1\}\cup \{w_1,w_2,y\}.
\]
Let $D'$ and $P'$ be a dominating set and a packing for $(G',X')$  satisfying 
\begin{equation*}
     |D'| \leq 7|P'| + 2|(X')^{2+}| + |(X')^{1}|.
\end{equation*}
Let $N_G^2(u)$ denote the second neighborhood of $u$, that is, the set of vertices that are at distance at most $2$ from $u$. Since $N_G^2(u)\setminus \{x_1,v_1\} \subseteq N_{G'}[X']$, $P'$ cannot contain any vertex in $N_G^2(u)$ and so $P=P' \cup \{u\}$ is a packing in $G$. In order to extend the dominating set, we define $D = D' \cup \{w_1,w_2,v_1\}$. Since $y$ has degree $1$ (and $x_1,w_1,w_2$ are of arbitrary degree), we have $2|(X')^{2+}| + |(X')^{1}| \leq 2\left(|X^{2+}|+2\right) + \left(|X^{1}|+1\right) -1 = 2 |X^{2+}| +|X^{1}|+4 $, which gives
\begin{align*}
|D|
&= |D'|+3
\leq 7|P'| + 2|(X')^{2+}| + |(X')^{1}| +3
\leq 7|P'| +  2|X^{2+}| + |X^{1}|+4+3\\
&= 7(|P'|+1) +  2|X^{2+}| + |X^{1}|
= 7|P| +  2|X^{2+}| + |X^{1}|,
\end{align*}
that is, $(D,P)$ is a dominating-packing pair for $(G,X)$ that satisfies \Cref{eq:2-deg-G}, a contradiction. This concludes the proof of \Cref{th:2deg}.
\end{proof}

\section{Additional families of graphs}\label{sec:additional}

\subsection{Asteroidal triple-free graphs}

An {\it asteroidal triple} in a graph $G$ is a set $T$ of $3$ vertices such that $G[T]$ is independent and every pair of vertices in $T$ can be joined by a path that avoids the neighborhood of the third vertex in $T$. A graph $G$ is {\it asteroidal triple-free (AT-free)} if $G$ does not contain an asteroidal triple. Various classes of graphs are known to be AT-free, for example, interval graphs, co-comporability graphs and permutation graphs (see e.g. \cite{COS97}). We show that all these classes have a bounded ratio by proving the following theorem.

\begin{restatable}{theorem}{ATfree}
    \label{th:ATfree}
    For every asteroidal triple-free graph $G$, we have $\gamma(G) \leq 3 \cdot \rho(G)$.
\end{restatable}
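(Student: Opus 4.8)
The plan is to prove the statement for AT-free graphs by establishing a structural fact about such graphs that lets me bound the domination number in terms of the packing number. Since AT-free graphs are not obviously amenable to the edge-contraction or degeneracy inductions used earlier, I would lean on a known structural feature of AT-free graphs: the existence of \emph{dominating pairs} and, more usefully, the fact that AT-free graphs admit \emph{dominating paths}. Concretely, I would invoke (or reprove) the classical result that every connected AT-free graph has a dominating pair, that is, two vertices $s,t$ such that every $s$--$t$ shortest path is a dominating set of $G$. This reduces the problem to understanding domination along a single shortest path.

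First I would reduce to the connected case, since $\gamma$ and $\rho$ are additive over connected components. Next, fix a dominating shortest path $\pi = s = p_0, p_1, \dots, p_\ell = t$ in $G$. The key observation is that the vertices of $\pi$, taken every third step, form a packing: the sequence $p_0, p_3, p_6, \dots$ consists of vertices that are pairwise at distance at least $3$ along a shortest path, and because $\pi$ is a shortest path, their pairwise distances in $G$ are exactly their distances along $\pi$, so this subsequence is a genuine $(\emptyset,\emptyset)$-packing. This gives roughly $\ell/3$ packing vertices, so $\rho(G) \geq \lfloor \ell/3 \rfloor + 1$ or a similar bound. On the other hand, the whole path $\pi$ dominates $G$, so $\gamma(G) \leq \ell + 1$. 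Comparing these two estimates yields a ratio of about $3$, which matches the target constant $\gamma(G) \leq 3\rho(G)$. I would need to handle the boundary arithmetic carefully (short paths, the additive constants) to land exactly on the factor $3$ rather than $3$ plus a small additive term, possibly by choosing the packing offset among $\{0,1,2\}$ that maximizes the count, or by arguing that the small cases are trivially dominated.

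The main obstacle I anticipate is twofold. First, I must ensure that the packing extracted from the path is correct even when $\pi$ is short or when $G$ is small; the clean ratio $\gamma \leq 3\rho$ with no additive slack requires that the counting $\gamma \leq \ell+1$ versus $\rho \geq \lceil (\ell+1)/3\rceil$ works out, and verifying $\ell + 1 \leq 3\lceil(\ell+1)/3\rceil$ is immediate but the offset choice matters for getting the ceiling. Second, and more delicate, is justifying that a dominating shortest path exists: the dominating pair theorem for AT-free graphs guarantees a pair $(s,t)$ such that every \emph{induced} $s$--$t$ path dominates, and I would need to confirm that a shortest path qualifies (every shortest path is induced) so that the distance-along-path equals distance-in-$G$ property holds simultaneously with the domination property. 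If citing the dominating pair result is acceptable, this step is short; if I must reprove it, that becomes the technical heart of the argument and would require unpacking the AT-free definition to rule out the three mutually ``bypassing'' paths that an asteroidal triple would create.

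If the dominating-path route proves awkward to align with the exact constant, my fallback plan is to use the $(X,Y)$-packing induction machinery from Section~\ref{sec:stronger} together with the hereditary nature of AT-free graphs, peeling off a vertex from one end of a dominating pair and tracking the freed-up neighborhoods; but I expect the direct path-counting argument to be the cleaner and more natural proof here.
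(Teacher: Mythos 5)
Your proposal takes essentially the same route as the paper: it invokes the dominating-pair theorem for connected AT-free graphs (the paper cites exactly this result of Corneil, Olariu, and Stewart), takes a shortest path between the pair as a dominating set, and extracts every third vertex of that shortest path as a packing, using that distances along a shortest path equal distances in $G$. If anything, your insistence on the ceiling arithmetic $\ell+1 \leq 3\lceil(\ell+1)/3\rceil$ with a well-chosen offset is slightly more careful than the paper's own accounting, which only records $\rho(G) \geq \lfloor p/3 \rfloor$ and thus leaves a small off-by-one when $p \not\equiv 0 \pmod 3$.
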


\begin{proof}
Our proof is based on  the following property.

\begin{theorem}\cite{COS97}\label{ATfreeDom}
    Every connected asteroidal triple-free graph contains a pair of vertices such that any path between this pair of vertices is dominating. 
\end{theorem}
    Let $G$ be an AT-free graph. By Theorem \ref{ATfreeDom} there is a pair of vertices $u,v\in V(G)$ such that any path between $u$ and $v$ is dominating. Let $\Pi=\pi_1,\pi_2,\ldots, \pi_p$ be the shortest path between $u$ and $v$. Thus, we have $\gamma(G)\le p$. On the other hand as $\Pi$ is a shortest path in $G$, by taking the set of vertices $\{\pi_{3i+1}\mid 0\le i \le \lfloor p/3 \rfloor-1 \}$ we get a packing in $G$. Therefore, $\rho(G)\ge \lfloor p/3 \rfloor \geq \lfloor \gamma(G)/3 \rfloor$. 
\end{proof}

\subsection{Convex graphs}

A bipartite graph $G = (X \cup Y, E)$ is \emph{convex} if and only if one of the color classes, without loss of generality $X$, has an ordering such that for all $ y \in Y$ the vertices adjacent to $y$ are consecutive in the ordering. We refer to this ordering as {\em convex ordering}. 

\begin{theorem}
    For every convex graph $G$, we have $\gamma (G) \leq 3 \cdot \rho (G)$.
\end{theorem}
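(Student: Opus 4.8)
The plan is to reduce the problem to two one-dimensional interval problems and to build a dominating set $D$ and a packing $P$ simultaneously so that $|D| \le 3|P|$, which gives $\gamma(G) \le |D| \le 3|P| \le 3\rho(G)$. Since $\gamma$ and $\rho$ are both additive over connected components, I may assume $G$ is connected with at least two vertices, so every $X$-vertex has a neighbour. Fix the convex ordering $x_1 < \cdots < x_n$ of $X$ and encode each $y \in Y$ by the interval $I(y) = [l(y), r(y)]$ of indices of its (consecutive) neighbours. The starting observation is that in a bipartite graph two vertices on the same side have disjoint closed neighbourhoods exactly when they share no common neighbour; for two $Y$-vertices this means precisely that their intervals are disjoint. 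Hence any family of pairwise-disjoint $Y$-intervals is a packing in $G$, and so has size at most $\rho(G)$.

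First I would dominate $Y$. To dominate $y \in Y$ I need a vertex of $D \cap X$ whose index lies in $I(y)$; equivalently, $D \cap X$ must stab every interval $I(y)$. Running the classical right-endpoint greedy for minimum interval stabbing selects pairwise-disjoint intervals $q_1, \dots, q_k$ along the way and produces the stabbing point set $\{r(q_1), \dots, r(q_k)\}$. Setting $D_X = \{x_{r(q_1)}, \dots, x_{r(q_k)}\}$ then dominates all of $Y$, and since $q_1, \dots, q_k$ form a $Y$-packing, $|D_X| = k \le \rho(G)$.

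Next I would dominate $X$ by covering the full index set $\{1, \dots, n\}$ with intervals (possible since $G$ is connected). I would run the leftmost-uncovered greedy: repeatedly pick, among all intervals containing the leftmost still-uncovered index, one with the largest right endpoint, yielding a cover $c_1, \dots, c_\kappa$; then $D_Y = \{c_1, \dots, c_\kappa\}$ dominates all of $X$. The crucial step is to bound $\kappa \le 2\rho(G)$, for which I claim the odd-indexed intervals $c_1, c_3, c_5, \dots$ are pairwise disjoint. Indeed, if $c_{t+2}$ overlapped $c_t$ then $c_{t+2}$ would still contain the leftmost-uncovered point handled by $c_{t+1}$ while having a strictly larger right endpoint, contradicting the greedy choice of $c_{t+1}$; since the selected intervals have strictly increasing right endpoints this gives pairwise (not merely consecutive) disjointness. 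The odd-indexed intervals thus form a $Y$-packing of size $\lceil \kappa/2 \rceil$, whence $\kappa \le 2\rho(G)$.

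Finally, $D = D_X \cup D_Y$ dominates $X \cup Y$ and satisfies $|D| \le \rho(G) + 2\rho(G) = 3\rho(G)$, giving $\gamma(G) \le 3\rho(G)$. The main obstacle — and the only step that is not essentially bookkeeping — is the inequality $\kappa \le 2\rho(G)$ for the cover; the rest rests on the standard equivalence between pairwise-disjoint intervals and packings together with the two greedy algorithms. I would be careful to confirm that $D$ is genuinely dominating (each side is handled by the opposite part, $X$ by $D_Y$ and $Y$ by $D_X$) and that the exchange argument really yields a pairwise-disjoint subfamily rather than only separating consecutive odd-indexed intervals.
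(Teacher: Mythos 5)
Your proof is correct, and it takes a genuinely different route from the paper's, even though both start from the same point/interval encoding of a convex graph. The paper fixes a single \emph{maximal packing} $P$, chosen among all maximal packings to minimize the total length of its $Y$-intervals, and augments it to a dominating set of size at most $3|P|$ by adding, for each packing interval, the two $X$-vertices at its endpoints, and for each packing point $\pi(x)$, the interval through it that starts earliest and the one that ends latest; maximality of $P$ yields coverage, and the length-minimality is exactly what rules out an uncovered interval being contained in a packing interval. You instead construct the dominating set directly via two classical greedy algorithms --- right-endpoint piercing to dominate $Y$ and leftmost-uncovered covering to dominate $X$ --- and extract integral dual certificates from each run: the $k$ selected piercing intervals are pairwise disjoint, giving $k \leq \rho(G)$, and your exchange argument for the covering greedy (if $c_{t+2}$ met $c_t$ it would contain the point handled at step $t+1$ with a larger right endpoint, and strictly increasing right endpoints then force $l(c_{t+2}) > r(c_t)$, upgrading consecutive to pairwise disjointness) gives $\kappa \leq 2\rho(G)$, splitting the constant as $3 = 1+2$. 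All the delicate points check out: components of convex graphs are convex, so the reduction to connected $G$ is legitimate; disjoint $Y$-intervals correspond exactly to pairwise distance at least $3$ since same-side vertices are nonadjacent; and $D_X \cup D_Y$ dominates each side via the other. What each approach buys: yours is an algorithmic, primal--dual--flavored argument that outputs the dominating set together with packing certificates in polynomial time, whereas the paper's extremal single-packing argument charges all of $D$ against one packing and is shorter once the right minimization is guessed; both yield the same constant $3$.
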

\begin{proof}
    We can assume that $G$ does not have an isolated vertex.
    We will encode convex graphs using points and intervals in $\mathbb R$. In particular, if the convex ordering of $X$ is $x_1, \dots, x_n$, we use the embedding $\pi : X \to \mathbb R$ defined as $\pi (x_i) = i$, and for each $y \in Y$, we associate the interval $\iota(y) = \{ \pi(x) : x \in N_G(y)\} $, see \Cref{fig:convex} for an illustration. For subsets $X' \subseteq X$ and $Y' \subseteq Y$, we use the notation
    $
    \pi(X') =\big\{ \pi(x)  ~:~ x \in X'\big\}$ and $  \iota(Y') = \big\{ \iota (y) ~:~ y \in Y'\big\}\,.
    $
    In this encoding, the image of a set $S \subseteq V(G)$ is a union of points and intervals, corresponding to the vertices in $S \cap X$ and $S \cap Y$, respectively. 
    \begin{figure}[h]
    \includegraphics[width=\textwidth]{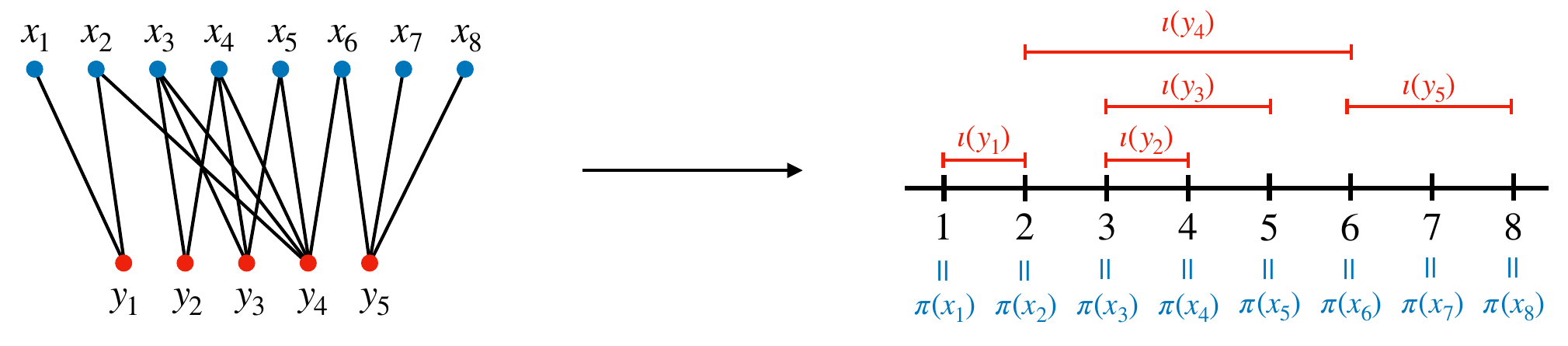}
    \caption{The embeddings $\pi$ and $\iota$.}\label{fig:convex}
    \end{figure}
    \newpage
    \noindent
    Observe that a set $D \subseteq V(G)$ is a dominating set if and only if
    \begin{enumerate}
        \item[(d1)] $\iota(D \cap Y)$ covers $\pi(X \setminus D)$, and
        \item[(d2)] $\pi( D \cap X)$ hits all the intervals in $\iota(Y\setminus D)$.
    \end{enumerate}
    Moreover, a set $P \subseteq V(G)$ is a packing if and only if the intervals in $\iota(P\cap Y)$ are disjoint,
         any interval in $\iota (Y)$ contains at most one point from $\pi(P \cap X)$,
        and no point from $\pi(P \cap X)$ is contained in any interval in $\iota(P\cap Y)$.
    Let $P$ be a maximal packing that minimizes the sum of the length of intervals in $\iota(P \cap Y)$, that is, the sum of degrees of vertices in $P\cap Y$. We construct a dominating set $D = P \cup D_1 \cup D_2$ with
    \begin{itemize}
        \item $D_1 = \big\{ x_y^1, x_y^2 ~:~ y \in P \cap Y \big\}\subseteq X$, where $x_y^1, x_y^2$ are the vertices corresponding to the end-points of the interval $\iota(y)$; 
        \item $D_2 = \big\{ y_x^1,y_x^2 ~:~ x \in P \cap X\big\}\subseteq Y$, where $\iota(y_x^1)$ is the interval that contains $\pi(x)$ and starts the earliest and $\iota(y_x^2)$ is the interval that contains $\pi(x)$ and ends the latest.
    \end{itemize}
    Note that $D$ has size at most $|P| + 2|P \cap X| + 2 |P \cap Y| = 3|P|$.
    First, we show that $D$ satisfies Property (d1). As $P \subseteq D$, it is sufficient to argue that $\iota(D \cap Y)$ covers $\pi(X\setminus P)$. By the maximality of $P$,  any point $p \in \pi(X \setminus P)$ is either contained in an interval of $\iota(P \cap Y)$, or there is an interval in $\iota(Y)$ that contains both $p$ and a point $\pi(x) \in \pi(P \cap X)$. As $\iota(P \cap Y) \subseteq \iota(D \cap Y)$, we only need to discuss the second case. Observe that by the definition of $y_x^1,y_x^2 \in D_2$, the union of the intervals $\iota(y_x^1)$ and $\iota(y_x^2)$ covers every point that is contained in any interval that contains $\pi(x)$, therefore $p \in \iota(D_2\cap Y) \subseteq \iota(D\cap Y)$.

    We follow a similar argument to show that $D$ has Property (d2). Again, as $P \subseteq D$, it is sufficient to show that $\pi(D \cap X)$ hits all the intervals in $\iota(Y \setminus P)$.
    By the maximality of $P$, for any $I \in \iota( Y \setminus P)$, either there is a point $p \in \pi(P \cap X)$ such that $p \in I$, in which case we are done, or there is an interval $\iota(y) \in \iota(P\cap Y)$ such that $I \cap \iota(y) \neq \emptyset$. In the second case, observe that by the minimality of the sum of the length of the intervals in $P$, we have $I \nsubseteq \iota(y)$ and so $I$ must contain at least one of the points $\pi(x_y^1), \pi(x_y^2) \in \pi(D_1) \subseteq \pi(D \cap X)$.    
\end{proof}

\subsection{Geometric graphs}

One of the most general families of geometric graphs is the family of string graphs. 
A {\it string graph} is the intersection graph of a finite collection of curves\footnote{ A {\it curve} (“string”) is a subset of the plane which is homeomorphic to the interval $[0,1]$.} in the plane. However, this class is too broad to have bounded ratio. To see this, we can use the facts that any chordal graph can be realized as a string graph and that split graphs are chordal graphs. Thus, Theorem \ref{th:negativesplit} implies the following corollary.

\begin{corollary}
    For every $k \in \NN$, there is a string graph $S_k$ such that $\rho(S_k) = 1$ and $\gamma(S_k) = k$.
\end{corollary}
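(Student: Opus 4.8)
The plan is to chain together the three facts recalled in the preceding paragraph so that the desired string graph is produced \emph{verbatim} from the split graph of \Cref{th:negativesplit}. First I would invoke \Cref{th:negativesplit} to obtain, for each $k\ge 1$, a split graph $S_k$ with $\rho(S_k)=1$ and $\gamma(S_k)=k$. Since $\rho$ and $\gamma$ are isomorphism-invariants of the abstract graph and do not depend on any particular geometric representation, it suffices to exhibit \emph{some} finite collection of curves in the plane whose intersection graph is isomorphic to $S_k$; the values of $\rho$ and $\gamma$ then transfer unchanged, and the same $S_k$ witnesses the corollary.

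For the existence of such a representation I would use the two structural inclusions already cited: every split graph is chordal, and every chordal graph is a string graph. The first is classical, since a split graph has no induced cycle of length at least four, in particular no induced $C_4$. The second is the statement that chordal graphs admit a curve intersection model; one way to see it is through their subtree-in-a-tree representation, where each vertex corresponds to a subtree of a host tree and two vertices are adjacent exactly when the subtrees share a node. Embedding the host tree in the plane and realizing each subtree by a curve tracing it yields curves that intersect precisely when the corresponding subtrees meet, giving the required string representation. Composing these two inclusions shows that $S_k$ is a string graph.

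There is essentially no obstacle here: the entire content lies in \Cref{th:negativesplit} together with the two well-known inclusions (split graphs are chordal, chordal graphs are string graphs). The only point one must state carefully is that passing from an abstract graph to an intersection representation leaves the combinatorial parameters $\rho$ and $\gamma$ untouched, which is immediate as they depend only on the underlying graph up to isomorphism. Hence the split graph $S_k$ of \Cref{th:negativesplit}, viewed as a string graph, satisfies $\rho(S_k)=1$ and $\gamma(S_k)=k$, as desired.
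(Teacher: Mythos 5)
Your proposal is correct and follows exactly the paper's argument: the corollary is deduced from \Cref{th:negativesplit} via the two inclusions that split graphs are chordal and chordal graphs are string graphs, with $\rho$ and $\gamma$ unaffected since they are invariants of the abstract graph. The extra detail you give (the subtree-in-a-tree model justifying that chordal graphs are string graphs) is a fine elaboration of a fact the paper simply cites.
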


On the other hand, we can obtain a bounded ratio for more constrained geometric intersection graphs. For instance, for {\it unit disk graphs}, that are intersection graphs of disks of radii $1$ in the plane, we obtain the following result. 

\begin{theorem}
    For any unit disk graph $G$, we have $\gamma(G)\le 32 \cdot \rho(G)$. 
\end{theorem}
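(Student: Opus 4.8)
The plan is to relate the unit-disk structure to a grid so I can apply a local-packing argument. First I would fix a representation of $G$ as the intersection graph of unit disks, and record that two disks are adjacent iff their centers are at Euclidean distance at most $2$. The key geometric fact is that any region of bounded diameter can contain only boundedly many pairwise-nonadjacent disk centers; more precisely, in any axis-parallel square of side length $s$, the number of vertices of $G$ that are pairwise at distance $\ge 3$ in $G$ (i.e.\ any subset of a packing) can be bounded by a constant depending on $s$, since packing-independent centers must be spread out in the plane.

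The main step is a charging scheme between a dominating set and a packing. I would take a maximum packing $P = \{p_1,\dots,p_\rho\}$ and use the fact that their closed neighborhoods are pairwise disjoint, so the disks within graph-distance $1$ of distinct $p_i$ occupy disjoint regions of the plane. Overlaying a grid of cells of side length $2$ (so that any two centers in a common cell are adjacent), I would argue that every vertex of $G$ lies within a bounded number of cells of some packing vertex: by maximality of $P$, every vertex is within graph-distance $2$ of some $p_i$, hence its center lies within Euclidean distance $4$ of the center of $p_i$. A disk of radius $4$ around a point meets only $O(1)$ grid cells of side $2$. Picking one vertex per nonempty cell then yields a dominating set whose size is at most (number of cells near some $p_i$) times $\rho(G)$, and counting cells in a disk of radius $4$ intersected with the plane gives the constant $32$.

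Concretely, I would first show $\gamma(G) \le (\text{number of side-}2 \text{ cells})\cdot\rho(G)$ by observing that selecting one center from each occupied cell dominates all disks in that cell, since two centers in the same cell of diameter $2\sqrt2 < $ the relevant threshold... here I must be careful: a cell of side $2$ has diameter $2\sqrt2 > 2$, so I would instead use cells of side $\sqrt2$ to guarantee that any two centers in a common cell are within distance $2$ and hence adjacent. Then each occupied cell is dominated by any single vertex it contains, so the vertices of such representatives form a dominating set. The counting then becomes: every vertex is at graph-distance $\le 2$ from $P$, so its center is within Euclidean distance $4$ of some $p_i$; the number of side-$\sqrt2$ cells meeting a disk of radius $4$ is at most $32$, giving $\gamma(G)\le 32\,\rho(G)$.

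The hard part will be pinning down the exact constant: I must choose the grid side length so that (a) one representative per cell genuinely dominates the cell (forcing side $\le\sqrt2$) while (b) the count of cells within graph-distance $2$ of a packing vertex is as small as possible, and reconcile the Euclidean radius (at most $4$, since two graph-adjacent steps move the center by at most $2$ each) with the grid geometry. Getting the area/cell-count bound tight enough to land on $32$ rather than a larger constant is the delicate calibration, and I would verify it by an explicit area estimate: a disk of radius $4$ has area $16\pi$, each cell has area $2$, and accounting for boundary cells yields the stated bound.
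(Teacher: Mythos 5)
Your high-level strategy is the same as the paper's: take a maximum packing $P$, use maximality to conclude that every vertex is at graph-distance at most $2$ from some $p \in P$ (so its center is within Euclidean distance $4$ of the center of $p$), then dominate each such radius-$4$ region by a bounded number of vertices, one per piece of a local covering by sets of geometric diameter at most $2$. All of that is sound. The gap is exactly in the step you flagged as ``delicate calibration'': the claim that at most $32$ cells of side $\sqrt2$ meet a closed disk of radius $4$ is false in the worst alignment. Place the disk's center at the center of a cell. A cell in column $i$, row $j$ meets the disk iff $d_x^2+d_y^2\le 16$, where $d_x=0$ for $i=0$ and $d_x=(|i|-\tfrac12)\sqrt2$ otherwise (similarly for $d_y$). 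Counting: the column $i=0$ contributes $7$ cells ($|j|\le 3$); each column $|i|=1$ contributes $7$; each column $|i|=2$ contributes $5$ ($|j|\le 2$, since $2(|j|-\tfrac12)^2\le 11.5$); each column $|i|=3$ contributes $3$ ($|j|\le 1$); columns $|i|\ge 4$ contribute none. The total is $7+14+10+6=37$. Your area heuristic does not rescue this: $16\pi/2=8\pi\approx 25.1$, and the boundary correction is governed by the perimeter term (roughly $8\pi/\sqrt2\approx 17.8$ cells crossed by the circle), which is why the true count overshoots $32$. So as written your argument proves $\gamma(G)\le 37\cdot\rho(G)$ (or whatever the exact worst-case cell count is, which is at least $37$), not the stated $\gamma(G)\le 32\cdot\rho(G)$.

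The paper gets $32$ by replacing the square grid with an optimized covering: all disks of vertices in $N[N[d]]$ lie in the disk $D$ of radius $5$ around the center of $d$, and by a result of Verblunsky~\cite{V49} such a disk can be covered by $32$ unit disks. If $c'$ is the center of a vertex-disk $d'\subseteq D$, then $c'$ lies in some covering disk with center $q$, so $|q-c'|\le 1$ and $q\in d'$; hence choosing one vertex-disk through each of the $32$ covering centers dominates $N[N[d]]$, giving $32$ dominators per packing vertex. The lesson is that axis-parallel squares of side $\sqrt2$ (area $2$, forced by the diameter-$2$ constraint) are provably too inefficient to reach $32$; to land on that constant you need a covering of the radius-$4$ (or radius-$5$) region by unit disks, i.e., an appeal to a nontrivial disk-covering result rather than a grid count.
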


\begin{proof}
We will use the following claim based on elementary geometry.
\begin{claim}\label{disjointDisks}
    For any fixed unit disk $d\in V(G)$, there is a set of at most $32$ points in the plane that intersect each of the disks in $N[N[d]]$. 
\end{claim}
\begin{proof}
    The disks in $N[N[d]]$ are contained in a disk $D$ of radius $5$ centered in the center of $d$. By \cite{V49} this disk can be covered by a set $S$ of at most $32$ unit disks. The collection of the centers of these unit disks intersect each of the disks contained in $D$, as otherwise $S$ is not a covering of $D$. 
\end{proof}    

\noindent
Let $P\subseteq V(G)$ be a maximum packing in $G$. Observe that the maximality of $P$ implies that $\cup_{d\in P}N[N[d]] = V(G)$.
To find a suitable dominating set, for each $d \in P$ we apply \Cref{disjointDisks} and for each of the $32$ points, we pick a disk in $V(G)$ that contains it (if such exists). In this way, we obtain a set $S(d)\subseteq V(G)$ of at most $32$ disks that dominates $N[N[d]]$. Taking the union of $S(d)$ over all $d \in P$, we obtain a dominating set in $G$ of size at most $32\cdot |P|$, as required.
\end{proof}

\section{Negative results}\label{sec:negative}

\subsection{A counterexample for Conjecture~\ref{conj:deg3}}
Recall the statement of the conjecture.

\maxdegconj*

\noindent
In this section, we show that the second part of Conjecture~\ref{conj:deg3} is false by describing an infinite family of graphs of maximum degree at most $3$ and $\gamma(G)=2\rho(G)+1$.

\negativeConj*

\begin{proof}
    First, we describe the basic building blocks of the graph class and an operation called \textit{chaining} that is used to generate the graphs in $\mathcal G$.
    
    A {\it block} is a graph on six vertices consisting of a path $P_6=v_1,v_2,\ldots,v_6$ together with two additional edges $\{v_1,v_5\}$ and $\{v_2,v_6\}$, see Figure \ref{fig:block} for an illustration. We refer to the vertices $\{v_3,v_4\}$ as {\it level one} vertices, the vertices $\{v_2,v_5\}$ as {\it level two} vertices and to $\{v_1,v_6\}$ as {\it level three} vertices of the block. The operation of {\it chaining} two blocks $A,B$ on vertex sets $\{a_1,a_2,\ldots,a_6\}$ and $\{b_1,b_2,\ldots,b_6\}$ returns the graph equal to the disjoint union of $A$ and $B$ with the additional edges $\{a_1,b_3\}$ and $\{a_6,b_4\}$. 
    \begin{figure}[h]\centering
    \includegraphics[width=5cm]{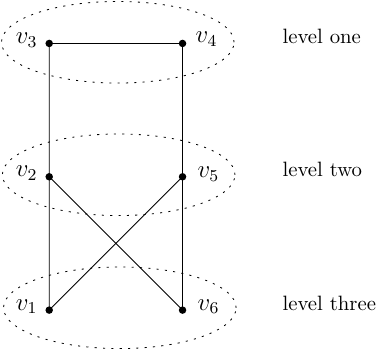}
    \caption{A block with its partition into levels. \label{fig:block}}
    \end{figure}
    
    Now we are ready to describe the graph class $\mathcal G = \{G_i ~|~ i \in \NN\}$. For any $i \in \NN$, the graph $G_i$ is obtained by taking $i$ blocks $A_1, A_2, \dots, A_i$, performing the chaining operation on the pairs $A_j,A_{j+1}$ for $j = 1, \dots, i-1$, then adding an edge $e=\{u_1,u_2\}$ and `chaining' the pairs $A_i,e$ and $e,A_1$. Formally, in the last step, we add the edges $\{u_1,a^1_3\},\{u_2,a^1_4\}$ and $\{u_1,a^i_1\},\{u_2,a^i_6\}$ where the vertices of $A_j$ are labeled as $ \{a^j_1,a^j_2,\ldots,a^j_6\}$ for each $j \in [i]$.  See Figure \ref{fig:blockcycle} for an illustration of $G_3$.
     \begin{figure}[h]\centering
    \includegraphics[width=9cm]{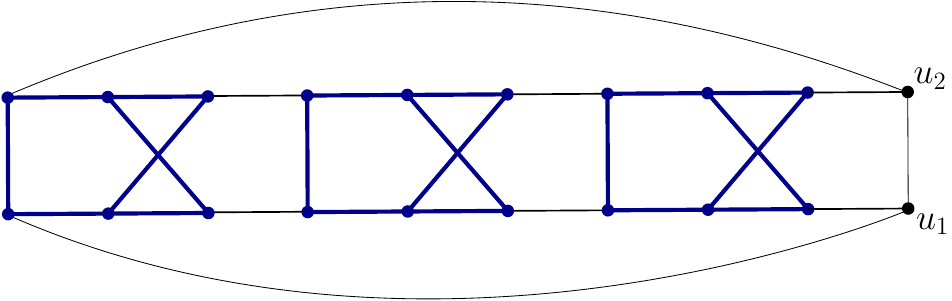}
    \caption{The graph $G_3$ obtained by chaining three blocks and an edge. The blocks are highlighted by thick, blue edges. \label{fig:blockcycle}}
    \end{figure}

    \begin{claim}
  For each $i \in \NN$, $\rho(G_i)=i$.
    \end{claim}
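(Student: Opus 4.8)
The plan is to prove the two inequalities $\rho(G_i)\ge i$ and $\rho(G_i)\le i$ separately. For the lower bound I would exhibit an explicit packing of size $i$, taking one level-one vertex from each block, say $\{a^j_3 : 1\le j\le i\}$. Since these vertices lie in distinct blocks and the only short connections between consecutive blocks $A_j,A_{j+1}$ run through the two chaining edges $a^j_1a^{j+1}_3$ and $a^j_6a^{j+1}_4$, a direct inspection shows that the closed neighborhoods $N[a^j_3]=\{a^j_3,a^j_2,a^j_4,a^{j-1}_1\}$ and $N[a^{j+1}_3]$ are disjoint, so $\dist(a^j_3,a^{j+1}_3)\ge 3$; non-consecutive blocks are even farther apart, and the wrap-around pair $a^1_3,a^i_3$ is separated by the edge $e$. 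Hence $\{a^j_3\}$ is a packing and $\rho(G_i)\ge i$.

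For the upper bound I would first note that each block, viewed as an induced subgraph, has diameter $2$, so any two vertices of the same block are at distance at most $2$ in $G_i$ (adding vertices and edges only shortens distances); likewise $u_1,u_2$ are adjacent. Consequently any packing contains at most one vertex from each block and at most one vertex from $e$, which already gives $\rho(G_i)\le i+1$. Everything then reduces to ruling out a packing $P$ of size $i+1$, i.e. one with exactly one vertex $p_j$ in each block $A_j$ together with one vertex $q\in\{u_1,u_2\}$. The block map $v_1\leftrightarrow v_6,\ v_2\leftrightarrow v_5,\ v_3\leftrightarrow v_4$, applied simultaneously to all blocks, is an automorphism of $G_i$ that swaps $u_1$ and $u_2$, so I may assume $q=u_1$.

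The heart of the argument is a propagation along the chain. Computing the radius-$2$ ball of $u_1$ forces $p_i\in\{a^i_3,a^i_4\}$ while $p_1\notin\{a^1_3,a^1_4\}$ (in fact $p_1\in\{a^1_1,a^1_5,a^1_6\}$). The key local implication, obtained by expanding $\dist(p_j,p_{j+1})\ge 3$ through each of the two bridges $a^j_1a^{j+1}_3$ and $a^j_6a^{j+1}_4$ into additive within-block distance inequalities, is: \emph{if $p_j\notin\{a^j_3,a^j_4\}$ then $p_{j+1}\notin\{a^{j+1}_3,a^{j+1}_4\}$.} Feeding the base case $p_1\notin\{a^1_3,a^1_4\}$ into this implication and iterating over $j=1,\dots,i-1$ yields $p_i\notin\{a^i_3,a^i_4\}$, contradicting $p_i\in\{a^i_3,a^i_4\}$. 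This rules out a packing of size $i+1$ and gives $\rho(G_i)\le i$, hence $\rho(G_i)=i$. I expect the main obstacle to be establishing the local implication cleanly: it needs a short but careful case analysis over the six possible positions of $p_j$ inside its block, tracking the within-block distances from $p_j$ to the two outgoing level-three vertices $a^j_1,a^j_6$ and from $p_{j+1}$ to the two incoming level-one vertices $a^{j+1}_3,a^{j+1}_4$, and verifying that once $p_j$ leaves the level-one pair the available slack through \emph{both} bridges is too small to let $p_{j+1}$ be a level-one vertex.
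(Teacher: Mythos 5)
Your proof is correct and follows essentially the same route as the paper: an explicit one-vertex-per-block packing for the lower bound, and for the upper bound the observation that each block has diameter $2$ (so a packing of size $i+1$ must consist of exactly one vertex per block plus one of $u_1,u_2$) followed by a propagation of a forbidden-position invariant along the chain to a wrap-around contradiction. The only difference is in the bookkeeping: you propagate the invariant $p_j\notin\{a^j_3,a^j_4\}$ (which indeed survives the case check over the bridges $a^j_1a^{j+1}_3$ and $a^j_6a^{j+1}_4$), while the paper phrases the same chain argument as forcing a level-three vertex in each successive block.
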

   \begin{proof} We can construct a packing of size $i$ in $G_i$ by fixing a level $\ell \in \{1,2,3\}$ and picking a level $\ell$ vertex from each block of $G_i$. 
   
   Assume to the contrary that there is a packing $P$ of size at least $i+1$. Observe that $P$ cannot contain two vertices from the same block, therefore $P$ must contain exactly one vertex from each block and one of $\{u_1,u_2\}$. 
   This implies that $P$ must contain a level $3$ vertex in its first block, forcing $P$ to have a level $3$ vertex in its second block, and so on until the $i^{\mathrm{th}}$ block, where we reach an impossibility of $P$ containing one of $\{u_1,u_2\}$  as well as a level $3$ vertex of the last block.
\end{proof}

 \begin{claim}
  For each $i \in \NN$, $\gamma(G_i)=2i+1$.
    \end{claim}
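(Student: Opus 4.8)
The plan is to prove the two bounds $\gamma(G_i)\le 2i+1$ and $\gamma(G_i)\ge 2i+1$ separately; together with the already established $\rho(G_i)=i$ this gives exactly $\gamma(G_i)=2\rho(G_i)+1$.

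For the upper bound I would simply exhibit a dominating set of the right size. The set $D=\{a^j_2,a^j_5 : 1\le j\le i\}\cup\{u_1\}$ works: inside each block one checks that $N[a^j_2]\cup N[a^j_5]$ is all of $\{a^j_1,\dots,a^j_6\}$, so $\{a^j_2,a^j_5\}$ dominates $A_j$; moreover $u_1$ dominates itself and $u_2$ is dominated by $u_1$. Hence $\gamma(G_i)\le|D|=2i+1$.

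The heart of the argument is the lower bound. For a dominating set $D$ write $d_j=|D\cap V(A_j)|$ and $d_e=|D\cap\{u_1,u_2\}|$, so $|D|=\sum_j d_j+d_e$. The level-$2$ vertices $a^j_2,a^j_5$ have all their neighbours inside $A_j$, so they can only be dominated from within the block; since $N[a^j_2]\cap N[a^j_5]=\{a^j_1,a^j_6\}$ this forces $d_j\ge 1$, with equality (call such a block \emph{cheap}) only if the unique dominator of $A_j$ is a level-$3$ vertex. The key forcing statement I would record next is that a cheap block leaves its level-$1$ vertices $a^j_3,a^j_4$ undominated from inside, and their only outside neighbours are the two level-$3$ vertices of the predecessor gadget; thus a cheap block forces \emph{both} level-$3$ vertices of its predecessor into $D$. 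In particular the predecessor of a cheap block is not cheap, and a block whose only two dominators are its level-$3$ vertices satisfies the same deduction, so each cheap block initiates a backward \emph{cascade} of $2$-dominator blocks that can only terminate at a block carrying a third (surplus) dominator or at the edge $e$, where it forces $d_e=2$.

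The counting then reduces the problem cleanly. Writing $t$ for the number of cheap blocks and $s=\sum_{d_j\ge 2}(d_j-2)$ for the total surplus, we have $\sum_j d_j=2i-t+s$, so that $|D|\ge 2i+1$ is equivalent to $s+d_e\ge t+1$. Each cheap block's backward cascade stays in the cyclic arc strictly between it and the previous cheap block (it cannot pass through a cheap block, which has no room for two forced level-$3$ vertices), so the cascades occupy pairwise disjoint arcs, their terminating surplus blocks are distinct, and at most one cascade reaches $e$. If one does, then $d_e=2$ while the remaining $t-1$ cheap blocks charge to distinct surpluses, giving $s+d_e\ge(t-1)+2=t+1$. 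The main obstacle is the complementary case, where no cascade reaches $e$ (so $s\ge t$) and $d_e=0$: here I must still extract one more unit, and this is exactly where the cyclic ``seam'' enters. With $d_e=0$ the vertices $u_1,u_2$ must be dominated by $a^1_3,a^1_4$ or by $a^i_1,a^i_6$, which propagates a \emph{forward} chain of $2$-dominator blocks that, because the successor of $A_i$ is $e$ and $d_e=0$, cannot close up and must deposit a surplus dominator near the seam. I expect the delicate point to be the bookkeeping guaranteeing that this seam-surplus is genuinely new — i.e.\ that the forward chain from the seam and the backward cascades from the cheap blocks occupy disjoint parts of the cycle — so that $s\ge t+1$. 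Making this disjointness airtight, equivalently showing that the cyclic structure forbids a globally consistent assignment of exactly two dominators per block with nothing at $e$, is the crux of the proof.
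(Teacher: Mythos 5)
Your upper bound and most of your lower-bound machinery are sound and closely parallel the paper's argument: the paper likewise observes that each block needs $d_j\ge 1$ (its level-2 vertices $a^j_2,a^j_5$ have all neighbors inside the block), that a block with $d_j=1$ must use a level-3 vertex, and that such a block forces both level-3 vertices of its predecessor into $D$, yielding a backward cascade through blocks with exactly two dominators; the paper packages this as an observation that no sub-chain can have all internal blocks with exactly two $D$-vertices while both extremal gadgets carry at most one. Your reduction of $|D|\ge 2i+1$ to $s+d_e\ge t+1$, the disjointness of cascade arcs, and the case where a cascade reaches $e$ (forcing $d_e=2$) are all correct.

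However, the proof is genuinely incomplete exactly where you say it is: the case $d_e=0$ with no cascade reaching $e$ is asserted, not proven. The worst instance is $t=s=0$, i.e.\ every block carries exactly two dominators and $e$ carries none, so $|D|=2i$; here there are no cheap blocks at all, your charging scheme is vacuous, and the claimed ``forward chain depositing a surplus near the seam'' is precisely what must be established. The paper closes this by a direct three-way analysis of how $u_1,u_2$ are dominated when $d_e=0$: since $N(u_1)\setminus\{u_2\}=\{a^1_3,a^i_1\}$ and $N(u_2)\setminus\{u_1\}=\{a^1_4,a^i_6\}$, either $\{a^1_3,a^1_4\}\subseteq D$, in which case the level-1 pair propagates forward block by block (each block's level-3 vertices can only be dominated by the next block's level-1 pair) until $A_i$'s level-3 vertices would require $u_1,u_2\in D$, a contradiction; or $\{a^i_1,a^i_6\}\subseteq D$, in which case the level-3 pair propagates backward until $A_1$'s level-1 vertices would require $u_1,u_2\in D$; or the mixed case, which forces a level-3 vertex in every block and leaves a vertex of the first block undominated. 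You would also need the same seam argument in the residual configuration $t\ge 1$, $s=t$, $d_e=0$, together with the bookkeeping that the seam-forced dominators are not already counted by the backward cascades --- the ``disjointness'' you flag but do not verify. Until this cyclic-consistency argument is carried out, the lower bound $\gamma(G_i)\ge 2i+1$ is not established.
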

   \begin{proof}
    We can construct a dominating set of size $2i+1$ in $G_i$ by fixing both level $\ell \in \{1,2,3\}$ and picking the level $\ell$ vertices from each block plus one of $\{u_1,u_2\}$.
   
     Assume to the contrary that there is a dominating set $D$ of $G_i$ of size at most $2i$. Let $A_j$  be a block of $G_i$ with $V(A_j) =  \{v_1,v_2,v_3,v_4,v_5,v_6\}$. First observe that in order to dominate the level $2$ vertices of $A_j$, we must have $|D\cap V(A_j)| \geq 1$. 
     
     If $A_j$ contains exactly $1$ vertex from $D$ then it must be one of the vertices from level three, otherwise either $v_2$ or $v_5$ could not be dominated. This also implies that $|V(A_{j-1})\cap D| \geq 2$ and if the block $A_{j-1}$ contains exactly $2$ vertices from $D$, then it must be the ones on level three, otherwise either $v_3$ or $v_4$ could not be dominated. Similarly, in this case $|V(A_{j-2})\cap D| \geq 2$ and if the block  $A_{j-2}$ has exactly $2$ vertices from $D$, both of them are from level three. Hence we can deduce the following observation. 

     \begin{observation}\label{obs:blocksD}
         Let $D$ be a dominating set in $G_i$, then there is no sub-chain of blocks such that all the internal blocks in the sub-chain contain exactly $2$ vertices from $D$ and the extremal blocks (one of which can be the edge $\{u_1,u_2\}$) contain at most $1$ vertex from $D$. 
     \end{observation}

    \noindent
    We will reach contradiction with $\gamma(G_i) \leq 2i$ via a case distinction on $|D\cap \{u_1,u_2\}|$.
    
     Assume that $|D\cap \{u_1,u_2\}|=2$. Then there are at least two blocks with exactly one vertex from $D$ such that all the blocks between them in the chaining have at most $2$ vertices from $D$, but this is a contradiction to \Cref{obs:blocksD}.
     
     Assume that $|D\cap \{u_1,u_2\}|=1$. Then there is at least one block with exactly one vertex from $D$. If there is exactly one such a block then the rest of the blocks contain exactly $2$ vertices from $D$ and we get a contradiction to \Cref{obs:blocksD}. If there are more than two such blocks then we again reach a contradiction to \Cref{obs:blocksD}.

     Finally, assume that $|D\cap \{u_1,u_2\}|=0$. If there is a block with exactly one vertex from $D$ then we also get a contradiction to \Cref{obs:blocksD}.
     Hence each block has exactly two vertices in $D$. 
    We consider three cases: the vertices $\{u_1,u_2\}$ are dominated by a pair of vertices in level one of the first block; the vertices $\{u_1,u_2\}$ are dominated by a pair of vertices in level three of the last block; the vertices $\{u_1,u_2\}$ are dominated by a vertex in level one and a vertex in level three of the first and last blocks respectively. In the first case, in each of the blocks we must choose the pair of vertices in level one to $D$ and so the level three vertices in the last block are not dominated, a contradiction. In the second case, similarly, $D$ must contain the pair of vertices in level three in each of the blocks, and so the level one vertices in the first block are not dominated, a contradiction. Finally, in the third case, $D$ must contain at least one vertex from the third level in each block, but this leaves at least one vertex in the first block not dominated, a contradiction. Hence we conclude the proof of the claim.
    \end{proof}
This completes the proof of \Cref{thm:negativeConj}.    
\end{proof}

\subsection{Split graphs}

A graph is called a \textit{split graph} if its vertex set can be partitioned into two sets such that one of them induces a clique and the other one is an independent set.

\negativeSplit*

\begin{proof}
For a fixed $k \in \NN$, we define the graph $S_k$ as follows 
\begin{itemize}
    \item $V(S_k) = C ~\dot\cup~ I$ with $|C|=2k-1$ and $|I|=\binom{2k-1}{k}$;
    \item $C$ induces a clique, $I$ is an independent set and to every subset of size $k$ in $C$, we associate exactly one vertex in $I$ which is adjacent to precisely the vertices of this subset, see Figure \ref{fig:negativeSplit} for an illustration of $S_2$.
\end{itemize}

     \begin{figure}[h]\centering
    \includegraphics[width=5cm]{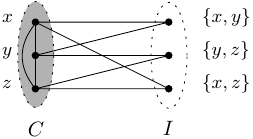}
    \caption{The graph $S_2$.\label{fig:negativeSplit}}
    \end{figure}

    First, we show that $\gamma(S_k) = k$. 

    To see that $\gamma(S_k) \leq k$, we will argue that any set $D$ of $k$ vertices from $C$ is a dominating set. Indeed, the vertices in $C$ are dominated, since $C$ is a clique. Any vertex $v \in I$ is also dominated, since it has $k$ neighbors in $C$ and $|C\setminus D|=k-1$, therefore the neighborhood of $v$ cannot be disjoint from $D$.
    To show that $\gamma(S_k) \geq k$, let $D'$ be any set of size $k-1$ and let $i \in [0,k-1]$ denote $|D' \cap I|$. Using this notation, we have $|D' \cap C| = k-1-i$ and $|C\setminus D'| = k+i$, which implies that $\binom{k+i}{k} > i$ vertices in $I$ are not dominated by $D' \cap C$. This set of at least $i+1$ vertices in $I$ should be dominated by the $i$ vertices in $D' \cap I$, which is impossible since $I$ is stable.

    It is easy to see that any pair of vertices in $S_k$ are either adjacent or have a common neighbor, which implies that $\rho(S_k) \leq 1$ and $\rho(S_k) = 1$ is trivially attainable.
\end{proof}

\section{Conclusion}\label{sec:ccl}
In this work, we have made progress in resolving the following major open problem.

\begin{question}
    Characterize those graph classes $\mathcal G$ for which there is a constant  $c_{\mathcal G}$ such that \[\frac{\gamma(G)}{\rho(G)}\le c_{\mathcal{G}} \quad \text{ for each } \quad G\in \mathcal{G} ~~ .\] 
\end{question}

We extended the lists of both positive and negative examples of graph classes. Moreover, we note that due to Theorem \ref{th:2deg} on $2$-degenerate graphs we cannot hope for a characterization of the above graph classes as sparse graph classes. 
We also improved the constants $c_{\mathcal G}$ for several important graph classes and sharpened Conjecture \ref{conj:deg3}.
We finish our paper by stating some open questions.

From previous works, the complete resolution of Conjecture \ref{conj:deg3}~\cite{HLR11} and bounding the ratio $\gamma/\rho$ for homogeneously orderable and chordal bipartite graphs~\cite{GG24} remain open.

The question of bounding the ratio $\gamma/\rho$ for graphs in many geometric graph classes remains open as well. For example, intersection graphs of axis-parallel rectangles in the plane or intersection graphs of axis-parallel boxes in three dimensions. Note that, using the result in \cite{AS14}, a bound of the form $\gamma(G)\le (\Delta(G)-1)\rho(G)+1$ in the latter class of graphs would imply Conjecture \ref{conj:deg3}.

Another direction for future work is improving the known bounds, for example, we believe that our bounds on planar, unit-disk, and $2$-degenerate graphs could be improved. 

\section{Acknowledgements}
The second and fourth authors learned about the question studied in this paper at the 11th Annual Workshop on Geometry and Graphs held at Bellairs Research Institute in March 2024. They are grateful to the organizers for the unique opportunity to attend this workshop. 
Yelena Yuditsky was supported by FNRS as a Postdoctoral Researcher.

\bibliographystyle{alpha}
\bibliography{coolnew}

\newcommand{\etalchar}[1]{$^{#1}$}
\begin{thebibliography}{BKTW21}

\bibitem[AC14]{AS14}
Abhijin Adiga and L.~Sunil Chandran.
\newblock Representing a cubic graph as the intersection graph of axis-parallel
  boxes in three dimensions.
\newblock {\em SIAM Journal on Discrete Mathematics}, 28(3):1515--1539, 2014.

\bibitem[BCD98]{BCD98}
Andreas Brandstädt, Victor~D. Chepoi, and Feodor~F. Dragan.
\newblock The algorithmic use of hypertree structure and maximum neighbourhood
  orderings.
\newblock {\em Discrete Applied Mathematics}, 82(1):43--77, 1998.

\bibitem[BGK{\etalchar{+}}24]{BGKTW24}
{\'{E}}douard Bonnet, Colin Geniet, Eun~Jung Kim, St{\'{e}}phan Thomass{\'{e}},
  and R{\'{e}}mi Watrigant.
\newblock Twin-width {III:} max independent set, min dominating set, and
  coloring.
\newblock {\em {SIAM} J. Comput.}, 53(5):1602--1640, 2024.

\bibitem[BHvV09]{BHV09}
Alewyn~P. Burger, Michael~A. Henning, and Jan~H. van Vuuren.
\newblock On the ratios between packing and domination parameters of a graph.
\newblock {\em Discret. Math.}, 309(8):2473--2478, 2009.

\bibitem[BKTW21]{BKTW21}
\'{E}douard Bonnet, Eun~Jung Kim, St\'{e}phan Thomass\'{e}, and R\'{e}mi
  Watrigant.
\newblock Twin-width \uppercase{I}: Tractable {FO} model checking.
\newblock {\em Journal of the ACM}, 69(1), nov 2021.

\bibitem[BM86]{BM86}
Hans-Jürgen Bandelt and Henry~Martyn Mulder.
\newblock Distance-hereditary graphs.
\newblock {\em Journal of Combinatorial Theory, Series B}, 41(2):182--208,
  1986.

\bibitem[BM03]{BM03}
Thomas B{\"{o}}hme and Bojan Mohar.
\newblock Domination, packing and excluded minors.
\newblock {\em Electron. J. Comb.}, 10, 2003.

\bibitem[COS97]{COS97}
Derek~G. Corneil, Stephan Olariu, and Lorna Stewart.
\newblock Asteroidal triple-free graphs.
\newblock {\em SIAM Journal on Discrete Mathematics}, 10(3):399--430, 1997.

\bibitem[Die17]{Dbook}
Reinhard Diestel.
\newblock {\em Graph Theory}.
\newblock Springer Publishing Company, Incorporated, 5th edition, 2017.

\bibitem[Dvo13]{D13}
Zdeněk Dvořák.
\newblock Constant-factor approximation of the domination number in sparse
  graphs.
\newblock {\em European Journal of Combinatorics}, 34(5):833--840, 2013.

\bibitem[Dvo19]{D19}
Zdeněk Dvořák.
\newblock On distance -dominating and -independent sets in sparse graphs.
\newblock {\em Journal of Graph Theory}, 91(2):162--173, 2019.

\bibitem[Far84]{Farber84}
Martin Farber.
\newblock Domination, independent domination, and duality in strongly chordal
  graphs.
\newblock {\em Discret. Appl. Math.}, 7(2):115--130, 1984.

\bibitem[GG25]{GG24}
Renzo G{\'{o}}mez and Juan Guti{\'{e}}rrez.
\newblock Domination and packing in graphs.
\newblock {\em Discret. Math.}, 348(1):114393, 2025.

\bibitem[HLMR24]{HLMR24}
Jedrzej Hodor, Hoang La, Piotr Micek, and Cl{\'{e}}ment Rambaud.
\newblock Weak coloring numbers of minor-closed graph classes.
\newblock {\em CoRR}, abs/2407.04588, 2024.

\bibitem[HLR11]{HLR11}
Michael~A. Henning, Christian Löwenstein, and Dieter Rautenbach.
\newblock Dominating sets, packings, and the maximum degree.
\newblock {\em Discrete Mathematics}, 311(18):2031--2036, 2011.

\bibitem[iO05]{o05}
Sang il~Oum.
\newblock Rank-width and vertex-minors.
\newblock {\em Journal of Combinatorial Theory, Series B}, 95(1):79--100, 2005.

\bibitem[Lov75]{Lovasz75}
L{\'{a}}szl{\'{o}} Lov{\'{a}}sz.
\newblock On the ratio of optimal integral and fractional covers.
\newblock {\em Discret. Math.}, 13(4):383--390, 1975.

\bibitem[LRR13]{LRR13}
Christian L{\"o}wenstein, Dieter Rautenbach, and Friedrich Regen.
\newblock Chiraptophobic cockroaches evading a torch light.
\newblock {\em Ars Combinatoria}, 111:181--192, 2013.

\bibitem[MM75]{MM75}
Amram Meir and John~W. Moon.
\newblock Relations between packing and covering numbers of a tree.
\newblock {\em Pacific Journal of Mathematics}, 61:225--233, 1975.

\bibitem[Ver49]{V49}
Samuel Verblunsky.
\newblock On the least number of unit circles which can cover a square.
\newblock {\em Journal of the London Mathematical Society}, s1-24(3):164--170,
  07 1949.

\end{thebibliography}

\end{document}